\newtheorem{theorem}{Theorem}[section]
\newtheorem{lemma}[theorem]{Lemma}
\newtheorem{corollary}[theorem]{Corollary}
\newtheorem{proposition}[theorem]{Proposition}
\newtheorem{example}[theorem]{Example}
\newtheorem{remark}[theorem]{Remark}
\newtheorem{definition}[theorem]{Definition}
\newtheorem{conjecture}[theorem]{Conjecture}
\DeclareMathOperator {\fA}{\mathfrak{A}}
\DeclareMathOperator {\fAo}{\mathfrak{A}_0}
\DeclareMathOperator {\fAob}{\mathfrak{A}^b_0}
\DeclareMathOperator {\fB}{\mathfrak{B}}
\DeclareMathOperator {\fC}{\mathfrak{C}}
\DeclareMathOperator {\bA}{\mathbf{A}}
\DeclareMathOperator {\bC}{\mathbf{C}}
\DeclareMathOperator {\bB}{\mathbf{B}}
\DeclareMathOperator {\id}{Id}
\DeclareMathOperator {\Pol}{Pol}
\DeclareMathOperator {\pol}{Pol}
\DeclareMathOperator {\NSP}{NSP}
\DeclareMathOperator {\Nsp}{NSP}
\DeclareMathOperator {\csp}{CSP}
\DeclareMathOperator {\Csp}{CSP}
\DeclareMathOperator {\nsp}{NSP}
\DeclareMathOperator {\ra}{RA}
\DeclareMathOperator {\bfA}{\mathbf{A}}
\DeclareMathOperator {\kC}{CON}
\DeclareMathOperator{\Meta}{Meta}
\DeclareMathOperator{\rbcp}{PN}
\DeclareMathOperator{\fra}{FRA}
\DeclareMathOperator{\pc}{PC}
\DeclareMathOperator{\rep}{Rep}
\DeclareMathOperator {\Aut}{Aut}
\DeclareMathOperator {\rra}{RRA}
\DeclareMathOperator {\dom}{Dom}
\DeclareMathOperator {\var}{Var}
\def\widebreve#1{\mathop{\vbox{\m@th\ialign{##\crcr\noalign{\kern3\p@}%
      \brevefill\crcr\noalign{\kern3\p@\nointerlineskip}%
      $\hfil\displaystyle{#1}\hfil$\crcr}}}\limits}
\def\brevefill{$\scriptscriptstyle\m@th \setbox\z@\hbox{$\scriptscriptstyle\braceld$}%
  \bracelu\leaders\vrule \@height\ht\z@ \@depth\z@\hfill\braceru$}
\title{Network Satisfaction Problems Solved by \texorpdfstring{$k$}{k}-Consistency}
\author{Manuel Bodirsky\footnote{Manuel Bodirsky has been funded by the European Research Council (Project POCOCOP, ERC Synergy Grant 101071674) and the DFG (Project FinHom, Grant 467967530). Views and opinions expressed are however those of the authors only and do not necessarily reflect those of the European Union or the European Research Council Executive Agency. Neither the European Union nor the granting authority can be held responsible for them.} 

 Manuel.Bodirsky@tu-dresden.de \and Simon Kn\"auer 
 
 simon.knaeuer@web.de}
\affil{Institut f\"{u}r Algebra, TU Dresden, 01062 Dresden, Germany}
\begin{document}

\maketitle

\begin{abstract}
    We show that the problem of deciding for a given finite relation algebra ${\bf A}$ whether the network satisfaction problem for ${\bf A}$ can be solved by the $k$-consistency procedure, for some $k \in {\mathbb N}$, is undecidable. For the important class of finite relation algebras ${\bf A}$ with a normal representation, however, the decidability of this problem remains open. We show that if ${\bf A}$ is symmetric and has a flexible atom, then the question whether $\nsp({\bf A})$ can be solved by $k$-consistency, for some $k \in {\mathbb N}$, is decidable (even in polynomial time in the number of atoms of ${\bf A}$).  
    This result follows from a more general sufficient condition for the correctness of the $k$-consistency procedure for finite symmetric relation algebras. In our proof we make use of a result of Alexandr Kazda about finite binary conservative structures. 
\end{abstract}

\section{Introduction}
Many computational problems in qualitative temporal and spatial reasoning can be phrased as \emph{network satisfaction problems (NSPs)} for \emph{finite relation algebras}.
Such a network consists of a finite set of nodes, and a labelling of  pairs of nodes by elements of the relation algebra. In applications, such a network models some partial (and potentially inconsistent)  knowledge that we have about some temporal or spatial configuration. 
The computational task is to replace the labels by \emph{atoms} of the relation algebra such that the resulting network has an embedding into a representation of the relation algebra. In applications, this embedding provides a witness that the input configuration is consistent (a formal definition of relation algebras, representations, and the network satisfaction problem can be found in Section~\ref{sect:rel-alg}). The computational complexity of the network satisfaction problem depends on the fixed finite relation algebra, and is of central interest in the mentioned application areas. 
Relation algebras have been studied since the 40's with famous contributions of Tarski~\cite{TarskiRelationAlgebras}, Lyndon~\cite{LyndonRelationAlgebras}, McKenzie~\cite{mckenzie1966,McKenzieRepIntRA}, and many others, 
with renewed interest since the 90s~\cite{HirschHodkinsonRepresentability,HirschHodkinsonStrongly,Hirsch,HirschHodkinson,Duentsch,BodirskyRamics,BodirskyKnaeAAAI}.

One of the most prominent algorithms for solving NSPs in polynomial time is the so-called \emph{path consistency procedure}. The path consistency procedure has a natural generalisation to the \emph{$k$-consistency procedure}, for some fixed $k \geq 3$. Such consistency algorithms have a number of advantages: e.g., they run in polynomial time, and they are \emph{one-sided correct} (sometimes called \emph{sound}), i.e., if they reject an instance, then we can be sure that the instance is unsatisfiable. Because of these properties, consistency algorithms can be used to prune the search space in exhaustive approaches that are used if the network consistency problem is NP-complete. 
The question for what temporal and spatial reasoning problems the $k$-consistency procedure provides a necessary and sufficient condition for satisfiability is among the most important research problems in the area~\cite{NebelRenzSurvey,Qualitative-Survey}. 
The analogous problem for so-called \emph{constraint satisfaction problems (CSPs)} 
was posed by Feder and Vardi~\cite{FederVardi} and 
has been solved for finite-domain CSPs by Barto and Kozik~\cite{BoundedWidthJournal}. 
Their result also shows that for a given finite-domain template, the question whether the corresponding CSP can be solved by the $k$-consistency procedure can be decided algorithmically (even in polynomial time if the given template is a core, e.g., if the template contains the relation $\{a\}$ for every domain element $a$; also see~\cite{MetaChenLarose}). 

In contrast, we show that there is no algorithm that decides for a given finite relation algebra ${\bf A}$ whether $\nsp(\bf A)$ can be solved by the $k$-consistency procedure, for some $k \in {\mathbb N}$. The question is also undecidable for every fixed $k \geq 3$; in particular, there is no algorithm that decides whether $\nsp(\bA)$ can be solved by the path consistency procedure (Theorem~\ref{thm:RBCP-undecidable}).  Our proof relies on results of Hirsch~\cite{Hirsch-Undecidable} and Hirsch and Hodkinson~\cite{HirschHodkinsonRepresentability}. The proof also shows that
Hirsch's \emph{Really Big Complexity Problem (RBCP;~\cite{Hirsch})} is undecidable. The RBCP asks for a description of those finite relation algebras ${\bf A}$ whose NSP can be solved in polynomial time. 

Many of the classic examples of relation algebras that are used in temporal and spatial reasoning, such as the point algebra, Allen's Interval Algebra, RCC5, RCC8, have so-called \emph{normal representations}, which are representations that are particularly well-behaved from a model theory perspective~\cite{Hirsch,Qualitative-Survey,BodirskyRamics}. 
The importance of normal representations combined with our negative results for general finite relation algebras 
prompts the question whether solvability of the NSP by the $k$-consistency procedure can at least be characterised for relation algebras $\bf A$ with a normal representation. Our main result is a sufficient condition that implies that $\nsp({\bf A})$ can be solved by the $k$-consistency procedure (Theorem~\ref{theo:Datatractability}). The condition can be checked algorithmically for a given ${\bf A}$. 
Moreover, for symmetric relation algebras with a flexible atom, which form a large subclass of the class of relation algebras with a normal representation, 
our condition provides a necessary and sufficient criterion for solvability by $k$-consistency  (Theorem~\ref{theo:Datalogcharact}). 
We prove that the NSP for every symmetric relation algebra with a flexible atom that cannot be solved by the $k$-consistency procedure is already NP-complete. 
Finally, for symmetric relation algebras with a flexible atom our tractability condition can even be checked in polynomial time for a given relation algebra ${\bf A}$ (Theorem~\ref{thm:meta}). 


In our proof, we exploit a connection between the NSP for relation algebras ${\bf A}$ with a normal representation and finite-domain constraint satisfaction problems. In a next step, this allows us to use strong results for CSPs over finite domains. 
There are similarities between
the fact that the set of relations of a representation of ${\bf A}$ is closed under taking unions on the one hand,
and so-called \emph{conservative finite-domain CSPs}~\cite{Conservative,Barto-Conservative, BulatovConservative,Bulatov-Conservative-Revisited} on the other hand; in a conservative CSP the set of allowed constraints in instances of the CSP contains all unary relations. 
The complexity of conservative CSPs has been classified long before the solution of the Feder-Vardi Dichotomy Conjecture~\cite{FederVardi,BulatovFVConjecture,ZhukFVConjecture,Zhuk20}. 
Moreover, there are particularly elegant descriptions of when a finite-domain conservative CSP {can be solved by the  $k$-consistency procedure for some $k\in \mathbb{N}$ (see, e.g., Theorem 2.17 in \cite{BulatovConservative}).}
Our approach is to turn the similarities into a formal correspondence so that we can use these results for finite-domain conservative CSPs to prove that $k$-consistency solves $\nsp({\bf A})$. A key ingredient here is  a contribution of Kazda~\cite{Kazda2015} about conservative \emph{binary} CSPs.

\section{Preliminaries}\label{sec:prelim}
A \emph{signature} $\tau$ is a set of {function or relation symbols each of which has an associated finite \emph{arity} $k\in \mathbb{N}$. A $\tau$-structure $\fA$ consists of a set $A$ together with a function $f^{\fA}\colon A^k\rightarrow A$ 
for every function symbol $f \in \tau$ of arity $k$
 and a relation $R^{\fA}\subseteq A^k$ for every relation symbol $R \in \tau$ of arity $k$.} The set $A$ is called the \emph{domain of $\fA$}. 
{Let $\fA $ and $\fB$ be $\tau$-structures. The \emph{(direct) product} $\fC=\fA \times \fB$ is the $\tau$-structure where \begin{itemize}
\item $A\times B$ is the domain of $\fC$;
\item for every relation symbol $Q$ of arity $n \in \mathbb{N}$ and every tuple $((a_1, b_1),\ldots,(a_n, b_n))\in (A\times B )^n$, we have that $((a_1, b_1),\ldots,(a_n, b_n))\in Q^\mathfrak{C}$ if and only if   $(a_1,\ldots,a_n)\in Q^{\fA}$ and $(b_1,\ldots,b_n)\in Q^{\fB}$;
\item for every function symbol $Q$ of arity $n \in \mathbb{N}$ and every tuple \\$((a_1, b_1),\ldots,(a_n, b_n))\in (A\times B )^n$, we have that $$ Q^{\mathfrak{C}} ((a_1, b_1),\ldots,(a_n, b_n)) := (Q^{\fA}(a_1,\ldots,a_n), Q^{\fB}(b_1,\ldots,b_n) ).$$
\end{itemize}
We denote the (direct) product $\fA\times \fA$ by $\fA^2$.  
The $k$-fold product $\fA \times \cdots \times \fA$ 
is defined analogously and denoted by $\fA^k$.}
Structures with a signature that only contains function symbols are called \emph{algebras} and structures with purely relational signature are called \emph{relational structures}. Since we do not deal with signatures of mixed type in this article, we will  use the term structure for relational structures only.

\subsection{Relation Algebras}
\label{sect:rel-alg}
{\emph{Relation algebras} are particular algebras; in this section we recall their definition and state some of their basic properties. 
We introduce \emph{proper} relation algebras, move on to abstract relation algebras,} and finally define  representations of relation algebras. 
For an introduction to relation algebras we recommend the textbook by Maddux  \cite{Maddux2006-dp}. 


Proper relation algebras are algebras 
whose domain is a set of binary relations over a common domain, {and which are equipped} with certain operations on binary relations. 

\begin{definition}
	Let $D$ be a set and $\mathcal{R}$ a set of binary relations over $D$ such that \\$(\mathcal{R}; \cup,  \bar{\phantom{o}}, 0,1,\id,  \Breve{\phantom{o}}, \circ ) $ is an algebra 
	with operations defined as follows:\begin{enumerate}	
	\item $0:= \emptyset$,
		\item $1:= \bigcup \mathcal{R}$,
		\item $\id:= \{(x,x) \mid x\in D \} $,
		\item $a \cup b := \{(x,y)  \mid (x,y)\in a \vee (x,y)\in b \} $,
		\item $\bar{a}:= 1 \setminus a $,
		
		\item $\Breve{a} := \{(x,y) \mid (y,x)\in a  \}$,
		\item $a\circ b := \{(x,z) \mid \exists y \in D: (x,y)\in a  \textup{~ and~ } (y,z)\in b \}$,

	\end{enumerate}
for $a,b \in \mathcal{R}$.
Then $(\mathcal{R}; \cup,  \bar{\phantom{o}}, 0,1,\id,  \Breve{\phantom{o}}, \circ ) $ is called a \emph{proper relation algebra.}
	
\end{definition}

The class of all proper relation algebras is denoted by PA. 
Abstract relation algebras are a generalisation of proper relation algebras where the domain does not need to be a set of binary relations. 

\begin{definition}\label{def: abRA} An \emph{(abstract) relation algebra} $\bf A$ is an algebra with domain $A$ and signature $\{\cup, \bar{\phantom{o}}, 0,1,\id,  \Breve{\phantom{o}}, \circ \}$ such that
	\begin{enumerate}
		\item the structure $(A; \cup, \cap, \bar{\phantom{o}}, 0,1)$, with $\cap$ defined by $x\cap y := \overline{(\bar{x}\cup \bar{y})}$, is a Boolean algebra,
		\item $\circ$ is an associative binary operation on A, called \emph{composition},
		\item for all $a,b,c,\in A$: $(a\cup b)\circ c= (a\circ c) \cup (b\circ c)$,
		\item for all $a\in A$: $a\circ \id =a$,
		\item for all $a\in A$: $\Breve{\Breve{a} } = a$,
		\item for all $a,b\in A$: $\Breve{x}=\Breve{a}\cup \Breve {b}$ where $x:=a\cup b$,
		\item for all $a,b\in A$: $\Breve{x}=\Breve{b} \circ \Breve{a}$ where $x:=a\circ b$,
			\item for all $a,b,c \in A$: $\bar{b} \cup \big (\Breve{a} \circ \overline{(a\circ b)  } \big ) =\bar{b}$.
			\end{enumerate}
	\end{definition}

	We denote the class of all relation algebras by RA. 
	Let  $\mathbf{A}=(A; \cup,   \bar{\phantom{o}}, 0,1,\id,  \Breve{\phantom{o}}, \circ ) $ be a relation algebra. 
	Note that $\mathbf{A}$ also satisfies $\id \circ a = a$ for all $a \in A$, because it can be easily deduced from item 4, item 5, and item 7 in Definition~\ref{def: abRA}. 
	By definition,  $(A; \cup,\cap, \bar{\phantom{o}}, 0,1)$ is a Boolean algebra and therefore induces a partial order $\leq$ on $A$, which is defined by
	$x \leq y :\Leftrightarrow x \cup y = y$.  
Note that for proper relation algebras this ordering coincides with the set-inclusion order.
The minimal elements of this order in $A\setminus\{0\}$ are called \emph{atoms}. The set of atoms of $\bf A$ is denoted by $A_0$. Note that for the finite Boolean algebra $(A; \cup,\cap ,\bar{\phantom{o}}, 0,1 ) $ each element $a\in A$ can be uniquely represented as the union $\cup$ (or ``join'') of elements from a subset of $A_0$. We will often use this  fact and directly denote elements of the relation algebra $\bA$ by subsets of  $A_0$. 

By item 3.\ in Definition~\ref{def: abRA} the values of the composition operation $\circ$ in $\bfA$ are completely determined by the values of $\circ$ on $A_0$. This means that for a finite relation algebra the operation $\circ$  can be represented by a multiplication table for the atoms $A_0$.  

An algebra with signature $\tau=\{\cup,   \bar{\phantom{o}}, 0,1,\id,  \Breve{\phantom{o}}, \circ\}$ with corresponding arities $2$, $1$, $0$, $0$, $0$, $1$, and $2$ that is isomorphic to some proper relation algebra is called \emph{representable}. The class of representable relation algebras is denoted by RRA. 
Since every proper relation algebra and therefore also every representable relation algebra satisfies the axioms from the previous definition we have $\text{PA} \subseteq \text{RRA} \subseteq \text{RA}$.
A classical result of Lyndon~\cite{LyndonRelationAlgebras} states that there exist finite relation algebras $\bA \in \ra$ that are not representable; so the inclusions above are  proper. 
If a relation algebra $\bA$ is representable then the  isomorphism to a proper relation algebra is usually called a \emph{representation of $\bf A$}. 

We will be interested in the model-theoretic behavior of sets of relations which form the domain of a proper relation algebra, and therefore  consider  relational structures 
whose relations are precisely 
the relations of a proper relation algebra.
If the set of relations of a relational structure $\fB$ forms a proper relation algebra that is a representation of some abstract relation algebra ${\bf A}$, then it will be convenient to also call
$\fB$ a representation of $\bf A$.




\begin{definition}\label{def: rep}
	Let $\mathbf{A}\in \ra$. A \emph{representation of $\mathbf{A}$} is a  relational structure $\mathfrak{B} $ such that 
	\begin{itemize}
		\item $\mathfrak{B}$ is an $A$-structure, i.e., the elements of $A$ are {binary} relation symbols of ${\mathfrak B}$; 
		\item The map $a \mapsto a^\mathfrak{B}$ is an isomorphism between the abstract relation algebra $\mathbf{A}$ and the proper relation algebra
		$(\mathcal{R}; \cup,   \bar{\phantom{o}}, 0,1,\id,  \Breve{\phantom{o}}, \circ ) $ with domain $\mathcal{R}:=\{a^{\fB}\mid a\in A\}$.
		
	\end{itemize}
\end{definition}

Recall that the set of atoms of a relation algebra $\mathbf{A}=(A; \cup,   \bar{\phantom{o}}, 0,1,\id,  \Breve{\phantom{o}}, \circ ) $ is denoted by  $A_0$. The following definitions are crucial for this article.

\begin{definition}
	
A tuple $(x,y,z) \in (A_0)^3$ is called an \emph{allowed triple (of $\bA$)} if $z \leq x \circ y$. 
Otherwise, $(x,y,z)$ is called a \emph{forbidden triple (of $\bA$)}; {in this case $\overline{z}\cup  \overline{x\circ y}=1 $.}
We say that a relational $A$-structure $\mathfrak{ B}$ \emph{induces a forbidden triple (from $\mathbf{A}$)} if there exist $b_1,b_2, b_3\in B$ and $(x,y,z) \in (A_0)^3$ such that $x(b_1,b_2) , y(b_2,b_3)$ and $z(b_1,b_3)$ hold in $\fB$ and $(x,y,z)$ is a forbidden triple of $\bA$. 
\end{definition}
Note that a representation of $\bf A$ by definition does not induce a forbidden triple.
	A relation $R\subseteq A^3$ is called \emph{totally symmetric} 
 if for every bijection $\pi \colon \{1,2,3\}\rightarrow \{1,2,3\}$  we have
	$$(a_1,a_2,a_3)\in R ~\Rightarrow~ (a_{\pi(1)},a_{\pi(2)},a_{\pi(3)})\in R.$$
	The following is an immediate consequence of the definition of allowed triples.

\begin{remark}
 The set of allowed triples of a symmetric  relation algebra $\bA$ is totally symmetric.
 \end{remark}

\subsection{The Network Satisfaction Problem}
In this section we present computational decision problems associated with relation algebras. {We first introduce the inputs to} these decision problems, so-called $\mathbf{A}$-networks.

	\begin{definition}\label{defi:Netw}Let $\mathbf{A}$ be a relation algebra. An  \emph{$\mathbf{A}$-network $(V;f)$} is a finite set $V$ together with a function $f\colon V^2 \rightarrow A$.  
		An $\mathbf{A}$-network $(V;f)$ is \emph{satisfiable in a representation} 
		$\mathfrak{B}$ of $\bf A$ if there exists an assignment $s\colon V \rightarrow B$ such that for all $(x,y) \in E$ the following holds: $$ (s(x), s(y))\in f(x,y)^\mathfrak{B}. $$
		An $\mathbf{A}$-network $(V;f)$ is \emph{satisfiable} if there exists a representation $\mathfrak{B}$ of $\mathbf{A}$ such that $(V;f)$ is satisfiable in $\mathfrak{B}$.\end{definition}
With these notions we can define the network satisfaction problem.

\begin{definition}\label{defi:nsp}
	The \emph{(general) network satisfaction problem} for a finite  relation algebra $\mathbf{A}$, denoted by $\NSP(\bf A)$, is the problem of deciding whether a given  $\mathbf{A}$-network is satisfiable.\end{definition}

In the following we  
assume that for an $\mathbf{A}$-network $(V;f)$ it holds that $f(V^2)\subseteq A\setminus \{0\}$. Otherwise, $(V;f)$ is not satisfiable.
Note that every $\mathbf{A}$-network $(V;f)$  can be viewed as an $A$-structure $\mathfrak{C}$ on the domain $V$: for all $x,y \in V$ in the domain of $f$ and $a\in A$ the relation $a^\mathfrak{C}(x,y)$ holds if and only if $f(x,y)=a$.


It is well-known that for relation algebras $\bA_1$ and $\bA_2$ the direct product $\bA_1 \times \bA_2$ is also a relation algebra (see, e.g., \cite{HirschHodkinson}). We will see in Lemma~\ref{lem:products} that the direct product of representable relation algebras is also a representable relation algebra. 


\begin{definition}\label{def:unionrep}Let $\bA_1$ and $\bA_2$ be representable relation algebras.
Let $\fB_1$ and $\fB_2$ be  representations of $\bA_1$ and $\bA_2$ with disjoint domains.  Then the \emph{union representation} of the direct product $\bA_1 \times \bA_2$ is  the $(A_1\times A_2)$-structure $\fB_1 \uplus \fB_2$ on the domain $B_1\cup B_2$, 
defined for all $(a_1,a_2)\in A_1\times A_2$ by $$(a_1,a_2)^{\fB_1 \uplus \fB_2}:=a_1^{\fB_1} \cup a_2^{\fB_2}.$$
\end{definition}

The following well-known lemma  establishes a connection between products of relation algebras and union representations 
(see, e.g., Lemma~7 in~\cite{HirschCristiani} or Lemma~3.7 in~\cite{HirschHodkinson}); it states that union representations are indeed representations. Union representations will be the key object in our undecidability proof  for Hirsch's Really Big Complexity Problem.

\begin{restatable}[]{lemma}{lemproducts}\label{lem:products}Let $\bA_1$ and $\bA_2$ be  relation algebras.
Then the following holds:
\begin{enumerate}
    \item If $\fB_1$ and $\fB_2$ are representations of $\bA_1$ and $\bA_2$ with disjoint domains, then $\fB_1 \uplus \fB_2$ is a representation of $\bA_1 \times \bA_2$.
    \item If $\fB$ is a representation of $\bA_1 \times \bA_2$, then there exist representations $\fB_1$ and $\fB_2$ of $\bA_1$ and $\bA_2$ such that $\fB$ is isomorphic to $\fB_1 \uplus \fB_2$. 
\end{enumerate}

\end{restatable}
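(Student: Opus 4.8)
The plan is to treat both directions through the isomorphism $\phi$ between the abstract product algebra and its realisation as a proper relation algebra, and to exploit throughout that disjointness of the two domains makes all ``cross pairs'' vanish.

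For part~(1), write $\phi\colon (a_1,a_2)\mapsto a_1^{\fB_1}\cup a_2^{\fB_2}$, with image $\mathcal R:=\{\phi(a_1,a_2)\mid (a_1,a_2)\in A_1\times A_2\}$. Since $B_1\cap B_2=\emptyset$ we have $a_1^{\fB_1}\subseteq B_1^2$ and $a_2^{\fB_2}\subseteq B_2^2$, so the two summands of any union $\phi(a_1,a_2)$ can be read off separately; together with injectivity of $a_i\mapsto a_i^{\fB_i}$ this gives injectivity of $\phi$. I would then check preservation of $0,1,\id,\cup,\bar{\phantom{o}},\Breve{\phantom{o}},\circ$. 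Since product operations are componentwise and $\fB_1,\fB_2$ are representations, the cases of $0,1,\id,\cup,\Breve{\phantom{o}}$ are routine componentwise computations. Disjointness enters in only two places: for complement, $1^{\mathcal R}=1^{\fB_1}\cup 1^{\fB_2}$ is a disjoint union of two blocks, so complementation can be carried out blockwise; and for composition, in $(a_1^{\fB_1}\cup a_2^{\fB_2})\circ(a_1'^{\fB_1}\cup a_2'^{\fB_2})$ any witnessing middle element lies in a single $B_i$, so no path crosses between $B_1$ and $B_2$ and the composite equals $(a_1^{\fB_1}\circ a_1'^{\fB_1})\cup(a_2^{\fB_2}\circ a_2'^{\fB_2})$, matching the product. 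Hence $\phi$ is an isomorphism and $\fB_1\uplus\fB_2$ is a representation.

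For part~(2) the idea is to recover the domain splitting from the subidentities $e_1:=(\id_1,0_2)$ and $e_2:=(0_1,\id_2)$, which satisfy $e_1\cup e_2=\id$ and $e_1\cap e_2=0$. As $\phi$ (now the isomorphism attached to $\fB$) preserves $\cup,\cap,0$ and sends $\id$ to the full diagonal, the relations $e_1^{\fB},e_2^{\fB}$ are complementary sub-diagonals, and $B_i:=\{x\mid (x,x)\in e_i^{\fB}\}$ partition $B$ as $B_1\uplus B_2$. I would define $\fB_1$ on $B_1$ by $a_1^{\fB_1}:=(a_1,0_2)^{\fB}$ and $\fB_2$ on $B_2$ by $a_2^{\fB_2}:=(0_1,a_2)^{\fB}$; that these relations lie in $B_i^2$ follows from the identity $e_1\circ(1_1,0_2)\circ e_1=(1_1,0_2)$ (and its analogue), whose $\phi$-image is $e_1^{\fB}\circ(1_1,0_2)^{\fB}\circ e_1^{\fB}$, so that composing with the sub-diagonal $e_1^{\fB}$ restricts both coordinates to $B_1$. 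Verifying that $a_1\mapsto a_1^{\fB_1}$ is an isomorphism onto the proper relation algebra on $B_1$ is symmetric to part~(1): $\cup,\Breve{\phantom{o}},\circ$ are inherited from $\phi$ (with composition again unable to leave $B_1^2$), while $1^{\fB_1}=(1_1,0_2)^{\fB}$ and $\id^{\fB_1}=e_1^{\fB}$; the only relative operation is complement, handled by noting that $(a_1,0_2)$ and $(\overline{a_1},0_2)$ partition $(1_1,0_2)$, so their $\phi$-images partition $1^{\fB_1}$. Finally $\fB=\fB_1\uplus\fB_2$ holds on the nose, since $(a_1,0_2)\cup(0_1,a_2)=(a_1,a_2)$ yields $a_1^{\fB_1}\cup a_2^{\fB_2}=(a_1,a_2)^{\fB}$ on $B=B_1\uplus B_2$.

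The main obstacle in both parts is controlling the composition operation and the relativised complement: one must ensure that relational composition cannot leak between $B_1$ and $B_2$ (part~(1)) and, conversely, that restricting the ambient operations of $\fB$ to the block $B_1$ reproduces exactly the proper-relation-algebra operations there (part~(2)). Both reduce to the disjointness of $B_1$ and $B_2$ together with the composition identities $e_i\circ(\cdot)\circ e_i$; once these are established the remaining verifications are componentwise and routine.
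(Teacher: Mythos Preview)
Your proposal is correct and follows essentially the same approach as the paper. For part~(1) the paper simply says the verification is ``a straightforward calculation,'' and your outline fills this in appropriately. For part~(2) the paper partitions $B$ via $B_i:=\{x\mid (x,x)\in (1,0)^{\fB}\}$ (respectively $(0,1)^{\fB}$) rather than via the subidentities $e_i=(\id_i,0)$, but since diagonal pairs lie in $(\id,\id)^{\fB}$ these definitions coincide; your use of the identity $e_1\circ(1_1,0_2)\circ e_1=(1_1,0_2)$ to confine relations to $B_1^2$ is a clean variant of the paper's direct composition argument showing no pair in $B_1\times B_2$ lies in $(1,1)^{\fB}$.
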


\begin{proof}The first item can be checked by a straightforward calculation. 
For the second item {note that elements of $\bA_1 \times \bA_2$ are pairs $(a_1,a_2)\in A_1\times A_2$. Since $\fB$ is a representation of $\bA_1 \times \bA_2$, there exists for every $(a_1,a_2)\in A_1\times A_2$ a binary relation $(a_1,a_2)^{\fB} $. 
For better readability, we denote constants of relation algebras by the signature elements $\{ 0,1,\id\}$ (without the superscipt). It will always be clear from the context which algebra is meant. For example, $(0,1)\in A_1\times A_2$ is meant to be the element $(0^{\bA_1},1^{\bA_2})$  of the algebra $\bA_1 \times \bA_2$. }

Consider the sets 
\begin{align*}
B_1 & := \{x\in B\mid (x,x)\in (1,0)^{\fB }\} \\
\text{ and } B_2 & :=\{ x\in B\mid (x,x)\in (0,1)^{\fB }\}.
\end{align*}
We claim that $\{B_1, B_2\}$ forms a partition of $B$. Clearly, $B_1\cup B_2=B$, because $$\{(x,x)\mid x\in B\} =(\id,\id)^{\fB} \subseteq ((1,0)^{\fB }\cup (0,1)^{\fB }).$$ 
By the definition of the relation algebra $\bA_1 \times \bA_2$ it holds that $(0,0)= (1,0)\cap^{\bA_1 \times \bA_2} (0,1)$. Since $\fB$ is a representation of  $\bA_1 \times \bA_2$ we have $\emptyset=(0,0)^{\fB}= (1,0)^{\fB}\cap (0,1)^{\fB}$ 
and $B = (1,1)^{\fB} = (1,0)^{\fB}\cup (0,1)^{\fB}$ 
and 
it follows that $\emptyset=B_1\cap B_2$ and
$B = B_1 \cup B_2$. Hence, $\{B_1, B_2\}$ is a partition of $B$.

Furthermore, we claim that there is no pair $(x,y)\in B_1\times B_2$ in any relation $(a_1,a_2)^{\fB } \subseteq (1,1)^{\fB }$ of $\fB$. So see this, note that for a tuple $(x,y)\in B_1\times B_2$ with $(x,y)\in (1,1)^{\fB }$ it follows from the definition of the relational product $\circ$ that $\{(x,y)\} =(\{(x,x)\}\circ \{(x,y)\} )\circ \{ (y,y)\}$ and therefore $ (x,y)\in ((1,0)^{\fB }\circ (1,1)^{\fB })\circ  (0,1)^{\fB }$ holds. Since this contradicts $\emptyset = (0,0)^{\fB} =((1,0)^{\fB }\circ (1,1)^{\fB })\circ  (0,1)^{\fB }$, there is no pair  $(x,y)\in B_1\times B_2$ in any relation $(a_1,a_2)^{\fB } \subseteq (1,1)^{\fB }$.

Altogether we observe that $B_1$ is the domain of an $(A_1\times \{0\})$-structure $\fB_1$ with $(a,0)^{\fB_1}:=(a,0)^{\fB}$ for every $a\in A_1$. Analogously, the $(\{0\} \times A_2)$-structure $\fB_2$ is defined by $(0,a)^{\fB_2}:=(0,a)^{\fB}$ for every $a\in A_2$. One can check that the mapping $a \mapsto (a,0)^{\fB_1}$ is indeed an isomorphism that witnesses that $\bA_1$ has the representation $\fB_1$. Analogously, we get that   $a \mapsto (0,a)^{\fB}$ witnesses that  $\bA_2$ has the representation $\fB_2$.
\end{proof}

The following result uses Lemma~\ref{lem:products} to obtain reductions between different network satisfaction problems.  A similar statement can be  found in Lemma~7 from~\cite{HirschCristiani}, however
there the assumption on representability of the relation algebras $\bf A$ and $\bf B$  is missing. Note that without this assumption the statement is not longer true. Consider relation algebras $\bf A$ and $\bf B$ such that $\nsp(\bf A)$  is undecidable and $\bf B$ does not have a representation. Then $\bf A \times \bf B$ does also not have a representation (see Lemma~\ref{lem:products}) and hence $\nsp(\bf A \times \bf B)$ is trivial. We observe that the undecidable problem $\nsp(\bf A)$ cannot have a polynomial-time reduction to the trivial problem  $\nsp(\bf A \times \bf B)$.

\begin{lemma}\label{lem:reduce}
Let $\bf A,\bf B \in \rra$ be finite. Then there exists a polynomial-time reduction from $\nsp(\bf A)$ to $\nsp(\bf A \times \bf B)$. 
\end{lemma}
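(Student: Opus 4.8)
The plan is to reduce $\nsp(\bf A)$ to $\nsp(\bf A \times \bf B)$ by mapping an arbitrary $\bf A$-network to an $(\bf A \times \bf B)$-network whose satisfiability is equivalent to that of the original. The key structural tool is Lemma~\ref{lem:products}: a representation of $\bf A \times \bf B$ is (up to isomorphism) precisely a union representation $\fB_1 \uplus \fB_2$, where $\fB_1$ represents $\bf A$ and $\fB_2$ represents $\bf B$, and the interpretation of a pair $(a_1,a_2)$ is $a_1^{\fB_1} \cup a_2^{\fB_2}$. Since $\bf B \in \rra$ is finite and fixed, it has \emph{some} representation $\fB_2$, and the top element $1_{\bf B}$ of $\bf B$ is interpreted in $\fB_2$ as the full relation on $B_2$. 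This is the pair-component we will use to impose no constraint on the $\bf B$-side.

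First I would define the reduction on inputs. Given an $\bf A$-network $(V; f)$ with $f\colon E \to A$, I would output the $(\bf A \times \bf B)$-network $(V; g)$ on the same node set $V$, with the same domain $E$, by setting $g(x,y) := (f(x,y), 1_{\bf B})$ for each $(x,y) \in E$, where $1_{\bf B}$ is the top element of $\bf B$. This is clearly computable in polynomial time, since $\bf B$ is a fixed finite relation algebra so $1_{\bf B}$ is a constant and each label is rewritten in constant time. The point of pairing with $1_{\bf B}$ is that in any union representation $\fB_1 \uplus \fB_2$ we have $(f(x,y), 1_{\bf B})^{\fB_1 \uplus \fB_2} = f(x,y)^{\fB_1} \cup 1_{\bf B}^{\fB_2}$, so the $\bf B$-component places a constraint that is automatically satisfiable on the $\fB_2$-side and does not interfere with the $\bf A$-constraint.

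Next I would verify correctness in both directions, using Lemma~\ref{lem:products} to pass between representations. For the forward direction, suppose $(V; f)$ is satisfiable, say in a representation $\fB_1$ of $\bf A$ via an assignment $s\colon V \to B_1$. Fix any representation $\fB_2$ of $\bf B$ (with domain disjoint from $B_1$, which we may assume after renaming), so that by part~1 of Lemma~\ref{lem:products} the structure $\fB_1 \uplus \fB_2$ is a representation of $\bf A \times \bf B$. I claim the same assignment $s$, now viewed as a map $V \to B_1 \subseteq B_1 \uplus B_2$, satisfies $(V; g)$: for $(x,y) \in E$ we have $(s(x), s(y)) \in f(x,y)^{\fB_1} \subseteq f(x,y)^{\fB_1} \cup 1_{\bf B}^{\fB_2} = g(x,y)^{\fB_1 \uplus \fB_2}$. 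For the converse, suppose $(V; g)$ is satisfiable in some representation $\fB$ of $\bf A \times \bf B$ via $t\colon V \to B$. By part~2 of Lemma~\ref{lem:products}, $\fB$ is isomorphic to a union representation $\fB_1 \uplus \fB_2$, so we may assume $t\colon V \to B_1 \uplus B_2$. Here the main subtlety arises: I must show every node lands on the $\fB_1$-side, i.e.\ $t(V) \subseteq B_1$. This is where the disjointness of the two domains and the definition of the union representation do the work: for $(x,y) \in E$ the pair $(t(x), t(y))$ lies in $f(x,y)^{\fB_1} \cup 1_{\bf B}^{\fB_2}$, and since $f(x,y)^{\fB_1} \subseteq B_1^2$ and $1_{\bf B}^{\fB_2} \subseteq B_2^2$ are relations on disjoint domains, $t(x)$ and $t(y)$ must lie on the same side.

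The one genuine obstacle is that a node of $V$ which is \emph{not} incident to any edge in $E$ is unconstrained and could be placed on either side, and more pointedly, if the $\bf A$-network's constraint graph $(V,E)$ is disconnected, different connected components could a priori be assigned to $B_1$ in one component and $B_2$ in another. To handle this cleanly I would argue componentwise and then transfer. On each connected component of $(V,E)$ all nodes lie on a common side by the incidence argument above; if some component lies entirely in $B_2$, then the restriction of $t$ to that component witnesses satisfiability of the corresponding $\bf B$-subnetwork in $\fB_2$, but this does not directly give an $\bf A$-assignment. The robust fix is to note that we only need \emph{some} satisfying assignment into $\fB_1$: for any component placed in $B_2$, I instead observe that the $\bf A$-labels $f(x,y)$ on that component must be satisfiable in $\fB_1$ because, reading the $\bf A$-coordinate, the projection argument via the isomorphism of Lemma~\ref{lem:products} forces $f(x,y)$ to contain an atom realised in $\fB_1$; hence I can re-embed each component into $\fB_1$ independently and take the union of these partial assignments (possible since $\fB_1$ is a representation and distinct components share no constraints). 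This yields a global assignment $V \to B_1$ satisfying $(V; f)$, completing the reduction and the proof.
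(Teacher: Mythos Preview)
Your backward direction has a genuine gap: pairing with $1_{\bf B}$ makes the image network \emph{always} satisfiable, so this is not a reduction. Indeed, fix any representation $\fB_2$ of $\bf B$ and any point $d$ in its domain; the constant assignment $t(v)=d$ satisfies $(V;g)$ in $\fB_1\uplus\fB_2$, since for every edge $(x,y)$ one has $(d,d)\in \id^{\fB_2}\subseteq 1_{\bf B}^{\fB_2}\subseteq g(x,y)^{\fB_1\uplus\fB_2}$. Hence every $\bf A$-network, including every unsatisfiable one, is mapped to a satisfiable $({\bf A}\times{\bf B})$-network. Your attempted repair --- that on a component landing in $B_2$ a ``projection argument'' forces each $f(x,y)$ to contain an atom realised in $\fB_1$, and that the component can therefore be re-embedded in $\fB_1$ --- is not an argument: every nonzero element of $\bf A$ contains an atom realised in $\fB_1$ simply because $\fB_1$ is a representation, but nothing here shows that the \emph{joint} constraints of the component are simultaneously satisfiable in $\fB_1$.

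The paper avoids the issue by pairing with $0$ instead of $1_{\bf B}$: set $f'(x,y):=(f(x,y),0)$. Then in any union representation
\[
(f(x,y),0)^{\fB_1\uplus\fB_2}=f(x,y)^{\fB_1}\cup\emptyset=f(x,y)^{\fB_1}\subseteq B_1^2,
\]
so every node incident to an edge is forced into $B_1$, and a satisfying assignment for $(V;f')$ restricts directly to one for $(V;f)$ in $\fB_1$. The forward direction is exactly as you wrote it. Your overall strategy (use Lemma~\ref{lem:products} both ways) is the paper's strategy; only the choice of second coordinate needs to change.
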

\begin{proof} Consider the following polynomial-time reduction from $\nsp(\bf A)$ to $\nsp(\bf A \times \bf B)$. We map a given $\bA$-network $(V;f)$ to the $(\bf A \times \bf B)$-network $(V;f')$  where $f'$ is defined by $f'(x,y):= (f(x,y),0)$. This reduction can  be computed in polynomial time.

\medskip 
{\bf Claim 1.} If $(V;f)$ is satisfiable then $(V;f')$ is also satisfiable. Let $\fA$ be a representation of $\bA$ in which $(V;f)$ is satisfiable and let $\fB$ be an arbitrary representation of $\bB$. By Lemma~\ref{lem:products}, the structure $\fA \uplus \fB$ is a representation of $\bf A \times \bf B$. Moreover, the definition of union representations (Definition~\ref{def:unionrep}) yields that the $(\bf A \times \bf B)$-network $(V;f')$ is satisfiable in $\fA \uplus \fB$.

\medskip 
{\bf Claim 2.} If $(V;f')$ is satisfiable then $(V;f)$ is satisfiable. Assume that $(V;f')$ is satisfiable in some representation $\fC$ of $\bf A \times \bf B$. By item 2 in Lemma~\ref{lem:products} we get that $\fC$ is isomorphic to $\fA \uplus \fB $, where $\fA$ and $\fB$ are representations of $\bA$ and $\bB$. It follows from  the definition of union representations that $(V;f)$ is satisfiable in the representation $\fA$ of $\bA$.

\medskip This shows the correctness of the polynomial-time reduction from $\nsp(\bf A)$ to $\nsp(\bf A \times \bf B)$ and finishes the proof.
\end{proof}

\subsection{Normal Representations and Constraint Satisfaction Problems}\label{subsec: normal rep csp}\label{subsec:csp}
We consider a subclass of $\rra$ introduced by Hirsch in 1996. For relation algebras $\bf A$ from this class, $\NSP(\bf A)$ corresponds naturally to a constraint satisfaction problem. 
In the following let $\mathbf{A} $ be in $\rra$. We call an $\mathbf{A}$-network $(V;f)$  \emph{closed}~(transitively closed in the work by Hirsch \cite{HirschAlgebraicLogic}) if  
for all $x,y,z \in V$ it holds that 
\begin{itemize}
    \item $f(x,x)\leq \id$,
    \item $f(x,y)=\Breve{a}$ for $a={f(y,x)}$,
    \item $f(x,z) \leq f(x,y) \circ f(y,z)$.
\end{itemize}
It is called \emph{atomic} if the range of $f$ only contains atoms from $\mathbf{A}$.

\begin{definition}[from \cite{Hirsch}]\label{defi:normalrep}
	Let $\mathfrak{B}$ be a representation of $\mathbf{A}$. Then $\mathfrak{B}$ is called 
	\begin{itemize}
		\item \emph{fully universal}, if every atomic closed $\mathbf{A}$-network is satisfiable in $\mathfrak{B}$;
		\item \emph{square}, if $1^\mathfrak{B}=B^2$;
		\item \emph{homogeneous}, if for every isomorphism between finite substructures of $\mathfrak{B}$ there exists an automorphism of $\mathfrak{B}$ that extends this isomorphism;
		\item \emph{normal}, if it is fully universal, square and homogeneous.
	\end{itemize}
\end{definition}

We now investigate the connection between $\nsp(\bA)$ for a finite relation algebra with a normal representation $\fB$ and constraint satisfaction problems.
Let $\tau$ be a finite relational signature and let $\fB$ be a (finite or infinite) $\tau$-structure. Then the \emph{constraint satisfaction problem for $\fB$}, denoted by $\csp(\fB)$, is the computational problem of deciding whether a finite input structure $\fA$ has a homomorphism to $\fB$. The structure  $\fB$ is called the template of $\csp(\fB)$.

Consider the following translation which associates to each  $\mathbf{A}$-network $(V;f)$ an $A$-structure $\fC$ as follows: the set $V$ is the domain of $\fC$ and $(x,y)\in C^2$ is in a relation $a^{\fC}$ if and only if $(x,y)$ is in the domain of $f$ and $f(x,y)=a $ holds. For the other direction let $\fC$ be an $A$-structure with domain $C$ and consider the $\mathbf{A}$-network $(C;f)$ with the following definition: for every $x,y\in C$, if $(x,y)$ does not appear in any relation of $\fC$ we leave $f(x,y)$ undefined, otherwise let $a_1(x,y),\ldots,a_n(x,y) $ be all atomic formulas that hold in $\fC$. We compute in $\mathbf{A}$ the element $a:=a_1 \cap \dots\cap a_n$ and define $f(x,y):=a$. 

	The following theorem  is based on the natural 1-to-1 correspondence between $\mathbf{A}$-networks and $A$-structures; it  subsumes the connection between network satisfaction problems and constraint satisfaction problems.



\begin{proposition}[Proposition 1.3.16 in \cite{Bodirsky-HDR-v8},  see also \cite{Qualitative-Survey,BodirskyRamics}]\label{thm: nsp csp 2}
		Let $\mathbf{A} \in \rra$ be finite. Then the following holds:\begin{enumerate}
			\item there exists a representation $\fB$ of $\mathbf{A}$ such that $\nsp(\mathbf{A})$ and $\csp(\mathfrak{B})$ are the same problem up to the translation between $\mathbf{A}$-networks and $A$-structures.
			\item If $\mathbf{A} $ has a normal representation $\fB$, then the problems $\nsp(\mathbf{A})$ and $\csp(\mathfrak{B})$ are the same up to the translation between $\mathbf{A}$-networks and $A$-structures.
		\end{enumerate}
\end{proposition}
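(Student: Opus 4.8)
The plan is to isolate the single observation that drives both parts and to reduce each to one nontrivial inclusion. For an \emph{arbitrary} representation $\fB$ of $\mathbf{A}$, unwinding Definition~\ref{defi:Netw} shows that an assignment $s\colon V\to B$ witnesses satisfiability of a network $(V;f)$ in $\fB$ exactly when $s$ is a homomorphism from the associated $A$-structure $\fC$ to $\fB$; so, under the translation, the set of $\mathbf{A}$-networks satisfiable in $\fB$ is precisely the set of yes-instances of $\csp(\fB)$. I would also check the reverse translation, from $A$-structures to networks, using that $a\mapsto a^{\fB}$ preserves $\cap$, so that the two directions of the correspondence agree. Since $\fB$ is a representation, satisfiability in $\fB$ trivially implies satisfiability; hence in both parts the only nontrivial point is that a network satisfiable in \emph{some} representation is already satisfiable in the distinguished one.

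For part~1 I would construct a \emph{universal} representation $\fB^*$. First I axiomatise ``being a representation of $\mathbf{A}$'' by a first-order theory $T_{\mathbf{A}}$ over the finite signature $A$: the atoms partition every pair below $1$, converse and the sub-identity atoms behave correctly, no forbidden triple occurs (a universal sentence), and each allowed composition is witnessed (a $\forall\exists$ sentence). Because inputs are finite, any network satisfiable in a representation $\fB'$ is, by downward L\"owenheim--Skolem applied to $T_{\mathbf{A}}$ with the finite witness adjoined, satisfiable in a countable representation. I then let $\fB^*$ be the disjoint union of one representative $\fB_i$ (with domain $B_i$) from each isomorphism type of countable representation; a routine extension of the union-representation construction behind Lemma~\ref{lem:products} shows $\fB^*$ is again a representation of $\mathbf{A}$, since composition and complement stay within the summands because each $a^{\fB_i}\subseteq B_i^2$. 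As every countable representation sits inside $\fB^*$ as a summand with $a^{\fB_i}\subseteq a^{\fB^*}$, satisfiability transfers to $\fB^*$, and taking $\fB:=\fB^*$ finishes part~1.

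For part~2, where $\fB$ is the given normal representation, I would prove the nontrivial inclusion from full universality (Definition~\ref{defi:normalrep}). Suppose $(V;f)$ is satisfiable in some representation $\fB'$ via $s'$. The relation $1^{\fB'}$ is an equivalence relation on $B'$ (reflexive, symmetric and transitive since $\id\le 1$, $\Breve{1}=1$ and $1\circ 1=1$), so every connected component $V_i$ of the constraint graph is sent by $s'$ into a single $1^{\fB'}$-class. On each $V_i$ I define an atomic network $g_i$ by letting $g_i(x,y)$ be the unique atom containing $(s'(x),s'(y))$; because $s'$ realises these atoms inside a representation, $(V_i;g_i)$ is total, atomic and closed, with $g_i(x,y)\le f(x,y)$ on edges. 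Full universality yields a satisfying assignment $s_i\colon V_i\to B$ of $(V_i;g_i)$ in $\fB$, and since no edge crosses components, the union $s:=\bigcup_i s_i$ satisfies $(V;f)$ in $\fB$. With the trivial inclusion this shows $\nsp(\mathbf{A})$ and $\csp(\fB)$ coincide.

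The main obstacle in both parts is exactly this transfer of satisfiability from an arbitrary representation to the fixed one, and it is where the two parts diverge: part~1 forces me to manufacture a universal representation, whose construction requires justifying first-order axiomatisability and the infinite union representation, whereas part~2 rests on full universality to satisfy each component's atomic closed network. The converse inclusions and the network--structure translation are routine bookkeeping; the one point deserving care there is the equivalence-relation behaviour of $1^{\fB'}$, which is what guarantees that each component's atomic network is total and therefore eligible for full universality.
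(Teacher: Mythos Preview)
The paper does not supply its own proof of this proposition; it is quoted from \cite{Bodirsky-HDR-v8,Qualitative-Survey,BodirskyRamics}. So there is no in-paper argument to compare against, and I can only assess your proposal on its own merits.

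Your overall strategy is the standard one and is correct. The translation argument is fine, and Part~2 is handled properly: reading off the atomic closed network on each connected component from a satisfying assignment in an arbitrary representation, invoking full universality of the normal representation on each component, and gluing is exactly the intended route. Your observation that $1^{\fB'}$ is an equivalence relation is the right way to guarantee totality of the atomic network on each component.

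For Part~1 your idea is also the right one, but one step is phrased misleadingly. You appeal to ``a routine extension of the union-representation construction behind Lemma~\ref{lem:products}''. That lemma concerns representations of a \emph{product} $\bA_1\times\bA_2$; what you actually need is the different (and easier) fact that a disjoint union of representations of the \emph{same} algebra $\mathbf{A}$ is again a representation of $\mathbf{A}$. This is true and the verification is the straightforward one you sketch (composition and complement stay within summands because the domains are disjoint and $1$ is the union of the squares), but it is not an instance or extension of Lemma~\ref{lem:products}; you should state and check it directly rather than pointing to that lemma. With that correction, taking the disjoint union of one countable representation from each isomorphism class (after downward L\"owenheim--Skolem, using that representability of a finite relation algebra is first-order) gives the desired fully universal representation and completes Part~1.
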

	
Usually, normal representations of relation algebras are infinite relational structures. This means that the transfer from NSPs to CSPs from Proposition~\ref{thm: nsp csp 2} results in CSPs over infinite templates, as in the following example.


\begin{example}\label{exam:point2}Consider the  \emph{point algebra} $\mathbf{P}$. The set of atoms of $\mathbf{P}$ is $P_0=\{\id,<,>\}$. The composition operation $\circ$ on the atoms is given by the multiplication table in Figure~\ref{Tab: PointAl}. The table completely determines the composition operation $\circ$ on all elements of $\mathbf{P}$. 
	\begin{figure}[t]
		
		\begin{center}
			
			\begin{tabular}{|c||c|c|c|}
				\hline 
				$~\circ~$	& $~\id~$ & $~<~$ & $~>~$ \\ 
				\hline \hline
				$\id$	&  $\id$ & $<$ &$>$  \\ 
				\hline 
				$<$	&$<$  & $ <$  & $1 $ \\ 
				\hline 
				$>$	& $>$ & $1$ & $>$ \\ 
				\hline 
			\end{tabular}

		\end{center}
		
		\caption{Multiplication table of the  point algebra $\mathbf{P}$.}
		\label{Tab: PointAl}
	\end{figure}
Note that the structure $\mathfrak{P}:=(\mathbb{Q}; ~\emptyset,<,>,=,\leq, \geq, \not =, \mathbb{Q}^2)$ is the normal representation of $\mathbf{P}$ and therefore $\nsp(\mathbf{P})$  and $\csp(\mathfrak{P})$ are the same problems up to the translation between networks and structures.

	\end{example}

\subsection{The Universal-Algebraic Approach}
We introduce in this section the study of CSPs via the universal-algebraic approach.

\subsubsection{Polymorphisms}
Let $\tau$ be a finite relational signature. 
A \emph{polymorphism} of a $\tau$-structure $\fB$ is a homomorphism $f$ from $\fB^k$ to $\fB$, for
some $k\in \mathbb{N}$ called the \emph{arity of $f$}. We write $\Pol(\fB)$ for the set of all polymorphisms of $\fB$. The set of polymorphisms is closed under composition, i.e., for all $n$-ary $f\in \pol(\fB)$ and $s$-ary $g_1,\ldots, g_n \in \pol(\fB)$  it holds that $f(g_1,\ldots, g_n) \in \pol(\fB)$, where $f(g_1,\ldots, g_n) $  is  a homomorphism from $\fB^s$ to $\fB$ defined as follows
	$$f(g_1,\ldots, g_n) (x_1,\ldots, x_s ):= f( g_1(x_1,\ldots, x_s ) , \ldots, g_n(x_1,\ldots, x_s     )).$$

If $r_1,\dots,r_n \in B^k$ and $f \colon B^n \to B$ an $n$-ary operation, then we write $f(r_1,\ldots,r_n)$ for the $k$-tuple
obtained by applying $f$ component-wise to the tuples $r_1,\ldots,r_n$. We say that $f \colon B^n \to B$ \emph{preserves}  a $k$-ary relation $R\subseteq B^k$  if for all $r_1,\ldots, r_n \in R$ it holds that
	$f(r_1,\ldots,r_n) \in R$.
We want to remark  that the polymorphisms of $\fB$ are precisely those operations that {preserve} all relations from $\fB$.

	 A first-order $\tau$-formula $\varphi(x_1,\ldots,x_n)$ is called \emph{primitive positive (pp)} if it has the  form
$$ \exists x_{n+1},\ldots, x_{m} (\varphi_1\wedge \cdots \wedge \varphi_s)$$
where $\varphi_1, \ldots, \varphi_s$ are atomic $\tau$-formulas, i.e., formulas of the form $R(y_1,\ldots, y_l)$ for $R\in \tau$ and $y_i \in \{x_1,\ldots, x_m\}$, of the form $y=y'$ for $y,y' \in \{x_1,\ldots, x_m\}$, or of the form $\bot$.  
We say that a relation $R$ is \emph{primitively positively definable over $\fA$} if there exists a primitive positive $\tau$-formula  $\varphi(x_1,\ldots,x_n)$  such that $R$ is definable over $\fA$ by $\varphi(x_1,\ldots,x_n)$.
The following result puts together polymorphisms and primitive positive logic.

\begin{proposition}[\hspace{-0.5pt}\cite{Geiger}, \cite{BoKaKoRo}]\label{theo:BKKR}
        Let $\fB$ be a  $\tau$-structure with a finite domain. Then the set of primitive positive definable relations in $\fB$ is exactly the set of relations preserved by $\pol(\fB)$.
\end{proposition}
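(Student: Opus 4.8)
The plan is to prove the Baker--Pixley / Bodnar\v cuk--Geiger--Jablonskij--Kalu\v znin--Rosenberg Galois connection, which is a two-directional inclusion. One direction is immediate: any relation $R$ defined by a primitive positive $\tau$-formula $\varphi$ is preserved by every polymorphism of $\fB$. Indeed, if $f \in \Pol(\fB)$ has arity $k$ and $r_1,\dots,r_k$ are tuples satisfying $\varphi$, one checks that $f(r_1,\dots,r_k)$ also satisfies $\varphi$ by pushing $f$ through the formula: conjunctions are handled because $f$ preserves each conjunct (atomic formulas name relations of $\fB$, which $f$ preserves by definition, and equalities and $\bot$ are trivially preserved), and the existential quantifiers are handled by choosing, for the witnesses of the $r_i$, their image under $f$ as the witness for $f(r_1,\dots,r_k)$. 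This uses associativity of component-wise application and nothing deep.

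The substantive direction is the converse: if a relation $R \subseteq B^n$ is preserved by all of $\Pol(\fB)$, then $R$ is primitive positive definable over $\fB$. First I would exploit finiteness of the domain $B$: say $|B| = d$. The key construction is the \emph{indicator} (or canonical) pp-formula. Let $t_1,\dots,t_m$ be an enumeration of all tuples in $R$, where $m = |R|$, and view each element $b \in B$ as giving a column; concretely form the $m \times n$ array whose rows are the tuples of $R$, so that column $j$ is a map $c_j \colon \{1,\dots,m\} \to B$, i.e.\ an element of $B^m$. I then write a canonical pp-formula $\psi(x_1,\dots,x_n)$ over $\fB$ as follows: introduce one existentially quantified variable $y_g$ for every $m$-ary operation $g$ that is a polymorphism of $\fB$ (equivalently, for every element of $B^m$ lying in the relational clone; but working with $\Pol(\fB)$ applied to the generating columns is cleanest), impose for each atomic relation $S$ of $\fB$ and each tuple of these variables the conjunct asserting membership whenever the corresponding columns stand in $S$ in $\fB^m$, and set $x_j := y_{c_j}$ for the projection columns $c_j$. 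The claim is that $\psi$ defines exactly $R$.

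The heart of the argument, and the step I expect to be the main obstacle, is showing $\psi(\fB) \subseteq R$: any tuple $a = (a_1,\dots,a_n)$ satisfying $\psi$ already lies in $R$. The standard route is to observe that a satisfying assignment, restricted to the generating variables $y_{c_1},\dots,y_{c_n}$, is the image $g(c_1,\dots,c_n)$ of the columns of $R$ under some operation $g \colon B^m \to B$ that is forced to preserve every relation of $\fB$ (this is exactly what the conjuncts of $\psi$ encode over all atomic formulas true in $\fB^m$), hence $g \in \Pol(\fB)$; since $R$ is preserved by $\Pol(\fB)$ and the columns $c_1,\dots,c_n$ are, by construction, precisely the tuples whose rows are the elements of $R$, applying $g$ componentwise keeps us inside $R$, giving $a = g(c_1,\dots,c_n) \in R$. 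The reverse inclusion $R \subseteq \psi(\fB)$ is easy: each $t_i \in R$ is witnessed by taking $g$ to be the $i$-th projection, which is always a polymorphism. Two technical points require care: the formula $\psi$ must be \emph{finite}, which holds because $B^m$ is finite (so there are finitely many candidate witness variables) and $\tau$ is finite with bounded arities (so there are finitely many conjuncts); and one must verify that ``satisfying assignment'' indeed corresponds to an operation preserving all of $\fB$'s relations rather than merely the named atomic ones, which follows since preserving the atomic relations of a relational structure is the definition of being a polymorphism. Finiteness of $B$ is essential here and is exactly the hypothesis of Proposition~\ref{theo:BKKR}; this construction fails for infinite domains, which is why the statement is restricted to finite $\fB$.
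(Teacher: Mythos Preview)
The paper does not prove this proposition; it is quoted as a classical result with references to Geiger and to Bodnar\v{c}uk--Kalu\v{z}nin--Kotov--Romov, so there is no ``paper's own proof'' to compare against. Your outline is the standard argument and is correct in its overall architecture: the easy direction (polymorphisms preserve pp-definable relations) is fine, and for the converse you correctly build a canonical pp-formula over $\fB^m$ whose satisfying assignments are exactly the $m$-ary polymorphisms, then use preservation of $R$ to conclude.

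There is, however, a genuine muddle in how you set up the formula. You write ``introduce one existentially quantified variable $y_g$ for every $m$-ary operation $g$ that is a polymorphism of $\fB$''. That is not right, and the parenthetical about ``elements of $B^m$ lying in the relational clone'' does not repair it. The variables of the canonical formula must be indexed by the \emph{elements of $B^m$}, not by polymorphisms: one variable $y_a$ for each $a\in B^m$; for every $k$-ary $S\in\tau$ and every $(a_1,\dots,a_k)\in S^{\fB^m}$ you add the conjunct $S(y_{a_1},\dots,y_{a_k})$; the free variables $x_1,\dots,x_n$ are the $y_{c_j}$ for your column tuples $c_j\in B^m$. With this indexing, a satisfying assignment is literally a map $B^m\to B$ preserving all relations of $\fB$, i.e.\ an $m$-ary polymorphism, and the rest of your argument goes through verbatim. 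Indexing by polymorphisms instead is circular (polymorphisms are what you are trying to extract from the assignment, not something available syntactically), and it makes your phrase ``the corresponding columns stand in $S$ in $\fB^m$'' ill-typed. Once you fix the indexing, your finiteness checks and the two inclusions $R\subseteq\psi(\fB)$ (via projections) and $\psi(\fB)\subseteq R$ (via preservation) are exactly as you describe.
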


\subsubsection{Atom Structures}\label{subsec: atomstructure}
In this section we introduce for every finite $\mathbf{A}\in \ra$  an associated finite structure $\fA_0$, called the \emph{atom structure} of $\bf A$ 
(also see~\cite{HirschJacksonK}; the definitions are essentially the same, with one minor technical difference that concerns the signature, mentioned below). If $\bA$ has a fully universal representation, then there exists a polynomial-time reduction from $\Nsp(\bA)$ to the  finite-domain constraint satisfaction problem  $\csp(\fA_0)$ (Proposition~\ref{reductionBtoO}).  Hence, this reduction provides polynomial-time algorithms  to solve NSPs, whenever the CSP of the associated atom structure can be solved in polynomial-time. For a discussion of the atom structure and related objects we recommend Section 4 in \cite{BodKnaeFlexJournal}.

\begin{definition}\label{def: atom str}
The \emph{atom structure of $\bf A \in \ra$} is the finite relational structure ${\fAo}$ with domain $A_0$ and the following relations:

	\begin{itemize}
		\item for every $x \in A$ the unary relation $x^{\fAo}:= \{a\in A_0 \mid   a\leq x \}$,
		\item the binary relation  $E^{\fAo}:= \{ (a_1,a_2)   \in A_0^2  \mid  \Breve{a_1}=a_2 \}$,
		\item the ternary relation $R^{\fAo}:=\{ (a_1,a_2,a_3) \in A_0^3\mid  a_3\leq a_1\circ a_2  \}$.
	\end{itemize}
\end{definition}

Note that $\fAo$ has all subsets of $A_0$ as unary relations\footnote{In contrast to our definition, the atom structure in~\cite{HirschJacksonK} has only one unary relation which contains the set of all atoms below the identity.} and that  the relation $R^{\fAo}$ consists of the allowed triples of  $\bf A \in \rra$. We say that an operation \emph{preserves the allowed triples} if it preserves the relation $R^{\fAo}$.

\begin{proposition}[\hspace{-0.5pt}\cite{BodirskyKnaeAAAI, BodKnaeFlexJournal}]  \label{reductionBtoO}
		Let $\mathfrak{B}$ be a fully universal representation of a finite $\mathbf{A}\in \rra$. Then there is  a polynomial-time reduction from $\csp(\mathfrak{B})$ to $\csp({\fAo})$.
\end{proposition}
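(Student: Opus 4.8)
The plan is to reduce $\csp(\mathfrak B)$ to $\csp(\fAo)$ through the notion of an \emph{atomic closed refinement}, so as to isolate exactly where full universality is needed. Given an input of $\csp(\mathfrak B)$, i.e.\ (via the translation of Proposition~\ref{thm: nsp csp 2}) an $\mathbf A$-network $(V;f)$ with associated $A$-structure $\fC$, I would output the $\fAo$-instance $\fD$ with domain $V^2$ defined as follows: for every $(u,v)$ in the domain of $f$ put $(u,v)$ into the unary relation named $f(u,v)$; put every diagonal pair $(u,u)$ into the unary relation $\id$; put $\big((u,v),(v,u)\big)\in E^{\fD}$ for all $u,v\in V$; and put $\big((u,v),(v,w),(u,w)\big)\in R^{\fD}$ for all $u,v,w\in V$. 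Since $\fD$ has $|V|^2$ elements and $O(|V|^3)$ tuples, it is computable in polynomial time.

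The key observation, obtained by unwinding the definition of $\fAo$, is that homomorphisms $h\colon \fD\to\fAo$ are in bijection with \emph{atomic closed refinements} of $(V;f)$, i.e.\ total maps $g\colon V^2\to A_0$ satisfying $g(u,v)\le f(u,v)$ whenever $f(u,v)$ is defined, $g(u,u)\le\id$, $g(v,u)=\Breve{g(u,v)}$, and $g(u,w)\le g(u,v)\circ g(v,w)$ for all $u,v,w\in V$. Indeed, the unary relations of $\fD$ force the refinement and subidentity conditions, the relation $E$ forces the converse condition, and the relation $R$ forces each triple $(g(u,v),g(v,w),g(u,w))$ to be allowed. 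This correspondence is purely definitional and uses no representation theory; it reduces the whole problem to the claim that $(V;f)$ is satisfiable in $\mathfrak B$ if and only if it admits an atomic closed refinement.

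For correctness I would argue both directions of this claim. If $(V;f)$ admits an atomic closed refinement $g$, then $g$ is itself an atomic closed $\mathbf A$-network, so by \emph{full universality} of $\mathfrak B$ it is satisfiable in $\mathfrak B$; since $g$ refines $f$, the same assignment satisfies $(V;f)$, giving a homomorphism $\fC\to\mathfrak B$. Conversely, from a satisfying assignment $s\colon V\to B$ I would define $g(u,v)$ to be the unique atom $a\in A_0$ with $(s(u),s(v))\in a^{\mathfrak B}$, and then verify that $g$ is an atomic closed refinement: refinement holds because $(s(u),s(v))\in f(u,v)^{\mathfrak B}$ forces $a\le f(u,v)$; the converse condition follows from $(\Breve a)^{\mathfrak B}=\{(y,x)\mid (x,y)\in a^{\mathfrak B}\}$; and each allowed-triple condition follows because $s(v)$ witnesses $(s(u),s(w))\in (g(u,v)\circ g(v,w))^{\mathfrak B}$, so the unique atom $g(u,w)$ below it satisfies $g(u,w)\le g(u,v)\circ g(v,w)$.

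The main obstacle is the well-definedness of $g$ in this last direction. The atomic relations $a^{\mathfrak B}$ partition $1^{\mathfrak B}$, which is an equivalence relation on $B$, so a unique atom exists only for pairs $(s(u),s(v))\in 1^{\mathfrak B}$; when $\mathfrak B$ is not square and the constraint graph of $(V;f)$ is disconnected, two nodes may be sent to different $1^{\mathfrak B}$-classes and no atom is pinned down. I would close this gap by showing that a satisfiable network can always be satisfied within a single $1^{\mathfrak B}$-class: this is immediate when $\mathfrak B$ is square (as for the point-algebra representation $\mathfrak P$ and the other standard templates), and in the general fully universal case it follows by invoking full universality once more to realise all connected components jointly inside one class, after which $g$ is defined on all of $V^2$ and the verification above goes through unchanged. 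Establishing this ``collapse into one square class'' step is the delicate part of the argument; the construction of $\fD$ and the equivalence with atomic closed refinements are routine.
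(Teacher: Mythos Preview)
The paper does not prove this proposition; it simply cites \cite{BodirskyKnaeAAAI,BodKnaeFlexJournal}. Your construction of $\fD$ and the identification of homomorphisms $\fD\to\fAo$ with atomic closed refinements is exactly the intended mechanism, and the direction using full universality is fine. The problem is the other direction, and your proposed ``collapse into one square class'' fix is not just delicate---it is false in general.

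Take $\mathbf A=\mathbf P\times\mathbf P$ (the square of the point algebra) with the fully universal representation $\mathfrak B=\mathfrak P\uplus\mathfrak P$ (Lemma~\ref{lem:products}). The atoms below $\id^{\mathbf A}=(\id,\id)$ are $(\id,0)$ and $(0,\id)$. Consider $V=\{u,v\}$ with $f(u,u)=(\id,0)$, $f(v,v)=(0,\id)$, and $f$ undefined elsewhere. This is satisfiable in $\mathfrak B$ by sending $u$ into the first copy of $\mathfrak P$ and $v$ into the second. But there is \emph{no} atomic closed refinement: any atom $g(u,v)$ is of the form $(a,0)$ or $(0,b)$, and then the triangle condition with $g(u,u)$ or $g(v,v)$ forces $g(u,v)\le 0$. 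So your $\fD$ has no homomorphism to $\fAo$ even though $\fC\to\mathfrak B$, and the two components certainly cannot be realised in a single $1^{\mathfrak B}$-class. Full universality says nothing about merging components across classes.

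The standard remedy is to carry out the reduction componentwise: let $\sim$ be the connected-component relation of the constraint graph, take the domain of $\fD$ to be $\{(u,v)\in V^2: u\sim v\}$, and impose the $E$- and $R$-constraints only on $\sim$-related tuples. Then a homomorphism $\fD\to\fAo$ is precisely a family of atomic closed refinements on the components, and both directions go through: a satisfying assignment for $(V;f)$ yields such a refinement on each component (since connected vertices land in the same $1^{\mathfrak B}$-class), and conversely full universality satisfies each component separately, which suffices because $f$ has no cross-component constraints. Everything in the paper that uses this proposition goes through normal (hence square) representations anyway, where your original reduction already works without modification; but for the statement as written you need the componentwise version.
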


\subsubsection{Conservative Clones}

Let $\fB$ be a finite $\tau$-structure. An operation $f \colon B^n \to B$ is called \emph{conservative} if for all $x_1,\ldots, x_n \in B$ it holds that $f(x_1,\ldots,x_n) \in \{x_1,\ldots, x_n\}$. The operation clone $ \Pol(\fB)$ is \emph{conservative} if every $f\in  \Pol(\fB)$ is conservative. We call a relational structure $\fB$ \emph{conservative} if $ \Pol(\fB)$ is conservative.

\begin{remark}\label{rem: Aoconserv} Let $\fAo$ be the atom structure of a finite relation algebra $\bA$. Every $ f\in \pol(\fAo)$ preserves all subsets of $A_0$, and is therefore  conservative. Hence, $\pol(\fAo)$ is conservative.

\end{remark}
This remark justifies our interest in the computational complexity of certain CSPs where the template has conservative polymorphisms. Their complexity can be  studied via universal algebraic methods as we will see in the following. We start with some definitions. An operation $f \colon B^3 \to B$ is called 
\begin{itemize}
\item 
a \emph{majority operation} if $\forall x,y\in B.f(x,x,y) = f(x,y,x) = f(y,x,x) = x$; 
\item a \emph{minority operation} if $\forall x,y\in B. f(x,x,y) = f(x,y,x) = f(y,x,x) = y$.
\end{itemize}
An operation $f \colon B^n \to B$, for $n \geq 2$, 
is called 
\begin{itemize}
\item a 
\emph{cyclic operation} if 
	$\forall x_1,\ldots, x_n \in B. f(x_1,\ldots,x_n)= f(x_n,x_1, \ldots, x_{n-1})$; 
 \item a \emph{weak near-unanimity operation} if 
	$$\forall x,y\in B. f(x,\ldots,x,y)= f(x,\ldots,x,y,x)=\ldots =f(y,x,\ldots,x);$$ 
 \item a \emph{Siggers operation} if $n = 6$ and 
		$\forall x,y\in B. f(x,x,y,y,z,z)= f(y,z, x,z,x,y ).$
\end{itemize}

The following terminology was introduced by Bulatov  and has proven to be extremely powerful, especially in the context of conservative clones.

\begin{definition}[\hspace{-0.5pt}\cite{Conservative,BulatovConservative}]\label{defi:edgescolor}
	A pair $(a,b) \in B^2$ is called a \emph{semilattice edge} if there exists $f \in \Pol(\fB)$ of arity two such that $f(a,b) = b = f(b,a) = f(b,b)$ and $f(a,a) = a$. We say that a two-element set $\{a,b\}\subseteq B$ \emph{has a semilattice edge} if $(a,b)$ or $(b,a)$ is a semilattice edge.
	
	  A two-element subset $\{a,b\}$ of $B$ is called a \emph{majority edge} if neither $(a,b)$ nor $(b,a)$ is a semilattice edge and there exists an $f \in \Pol(\fB)$ of arity three whose restriction to $\{a,b\}$ is a majority operation. 
	  
	A two-element subset $\{a,b\}$ of $B$ is called an \emph{affine edge} if it is not a majority edge, if neither $(a,b)$ nor $(b,a)$ is a semilattice edge,  and there exists an $f \in \Pol(\fB)$ of arity three whose restriction to $\{a,b\}$ is a minority operation. 
	
\end{definition}

{
If 
$S \subseteq B$ 
and $(a,b) \in S^2$ is a semilattice edge then we say that \emph{$(a,b)$ is a semilattice edge on $S$}.
Similarly, if 
$\{a,b\} \subseteq S$ is a majority edge (affine edge) then we say that $\{a,b\}$ is a \emph{majority edge on $S$} (\emph{affine edge on $S$}). }

According to  Definition~\ref{defi:edgescolor}, an ``edge type'' of a concrete set $\{a,b\}\subseteq B$ is witnessed by a certain operation. For another set $\{c,d\}\subseteq B$ this could a priori be a different operation (even if the two sets have the same edge type). However, Bulatov obtained ``uniform witness operations'' by the following proposition.

\begin{proposition}[Proposition 3.1 in~\cite{BulatovConservative}]\label{prop:bulatov color functions}
	Let $\fB$ be a finite structure. 
	Then there are a binary operation $v \in \Pol(\fB)$ and ternary operations $g,h \in \Pol(\fB)$ such that for every 
	two-element subset $C$
	of $B$ we have that
	\begin{itemize}
		\item $v|_{C}$ is a semilattice operation whenever $C$ has a semilattice edge, and $v|_{C}(x,y)=x$ otherwise;
		\item $g|_{C}$ is a majority operation if $C$ is  a majority edge, $g|_{C}(x,y,z)=x $ if $C$ is affine and $g|_{C}(x,y,z)=v|_{C}( v|_{C}(x,y),z)$ if $C$ has a semilattice edge;
		\item $h|_{C}$ is a minority operation if $C$ is  an affine edge, $h|_{C}(x,y,z)=x $ if $C$ is majority and $h|_{C}(x,y,z)=v|_{C}( v|_{C}(x,y),z)$ if $C$ has a semilattice edge.
		
	\end{itemize}
\end{proposition}

The main result about conservative finite structures and their CSPs is the following dichotomy, first proved by Bulatov, 14 years before the proof of the Feder-Vardi conjecture.

\begin{theorem}[\hspace{1sp}\cite{Conservative}; see also~\cite{Barto-Conservative,BulatovConservative,Bulatov-Conservative-Revisited}]\label{2elemtsubalgebrassiggers}
	Let $\fB$ be a finite structure with a finite relational signature such that $\Pol(\fB)$ is conservative. Then precisely one of the following holds:
	\begin{enumerate}
			\item $\Pol(\fB)$ contains a Siggers operation; 
   in this case, $\csp(\fB)$ is in P. 
		\item There exist distinct $a,b \in B$ such that for every $f \in \Pol(\fB)^{(n)}$ the restriction
		of $f$ to $\{a,b\}^n$ is a projection. In this case, $\csp(\fB)$ is NP-complete. 
	\end{enumerate}
\end{theorem}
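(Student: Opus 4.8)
The plan is to recognise this as Bulatov's dichotomy for conservative CSPs and to follow the algebraic route, splitting the statement into the combinatorial dichotomy (exactly one of the two structural conditions holds) and the two complexity consequences. I would first dispatch the easy half of ``precisely one''. Conditions (1) and (2) are mutually exclusive: if $f$ is a Siggers polymorphism and $\{a,b\}$ any two-element subset, then a direct substitution shows $f|_{\{a,b\}}$ cannot be a projection, since for each coordinate $i$ there is a choice of $x,y,z \in \{a,b\}$ for which the two argument tuples $(x,x,y,y,z,z)$ and $(y,z,x,z,x,y)$ differ in position $i$; hence (1) forbids (2) for every pair. The \NP-completeness in case (2) also follows quickly from the Galois connection. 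Since $\Pol(\fB)$ is conservative, every subset of $B$ is preserved and therefore pp-definable by Proposition~\ref{theo:BKKR}; in particular $\{a,b\}$ is pp-definable. The clone obtained by restricting $\Pol(\fB)$ to $\{a,b\}$ consists, by hypothesis, only of projections, so Proposition~\ref{theo:BKKR} again yields that every Boolean relation on $\{a,b\}$ is pp-definable in $\fB$. Encoding an \NP-hard two-element CSP (the hardest Schaefer case, e.g.\ positive $1$-in-$3$-SAT) with these relations, and using that pp-definability gives polynomial-time reductions between CSPs, produces a reduction to $\csp(\fB)$, so $\csp(\fB)$ is \NP-complete.

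For the tractability in case (1): if $\Pol(\fB)$ contains a Siggers operation then $\fB$ is a Taylor algebra, and I would invoke the polynomial-time algorithm for conservative Taylor CSPs. Concretely, Bulatov's conservative algorithm classifies, using the terminology of Definition~\ref{defi:edgescolor}, each two-element subalgebra as a semilattice, majority, or affine edge, enforces a strengthened local consistency, and then solves by propagating along semilattice edges and reducing the residual majority and affine parts to bounded-width and linear-algebraic subproblems respectively. (Alternatively one may cite the general CSP dichotomy algorithm, restricted to the present idempotent conservative setting.) This places $\csp(\fB)$ in $\Ptime$.

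The technical heart is the structural dichotomy itself: assuming (2) fails, I must produce a global Siggers operation. Here I would proceed locally. For every distinct pair $a,b$, since $\{a,b\}$ is a subalgebra (by conservativity) and (2) fails, some polymorphism restricts non-trivially to $\{a,b\}$; by the classification of two-element clones (Post's lattice) this forces the existence of a semilattice, majority, or minority operation on $\{a,b\}$, i.e.\ a colouring of the edges of the complete graph on $B$ by the three colours of Definition~\ref{defi:edgescolor}. The hard part, and the main obstacle, is to amalgamate these purely local witnesses into a single $6$-ary term that is Siggers on all of $B$ simultaneously. This requires Bulatov's compatibility analysis of the colouring --- showing that the colours interact coherently (for instance that majority and affine edges are handled by a common majority-like operation off the semilattice edges, and that semilattice edges behave absorbingly) --- and then gluing the local operations by composition while preserving all relations of $\fB$. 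This amalgamation step is exactly where the bulk of the difficulty of the conservative dichotomy lies, and I expect it to be by far the longest and most delicate part of the argument.
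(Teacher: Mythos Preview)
The paper does not prove this theorem; it is quoted as a known result of Bulatov (with alternative proofs by Barto and later simplifications), and the authors simply cite \cite{Conservative,Barto-Conservative,BulatovConservative,Bulatov-Conservative-Revisited} without giving any argument. So there is no ``paper's own proof'' to compare against.

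Your sketch is a faithful outline of the route taken in the cited literature: mutual exclusivity of (1) and (2) via the Siggers identity on two-element subalgebras, NP-hardness in case (2) via pp-definability of $\{a,b\}$ and all Boolean relations on it, tractability in case (1) via Bulatov's edge-coloured algorithm, and the structural dichotomy obtained by first colouring two-element subalgebras via Post's lattice and then amalgamating local witnesses into a global Siggers term. You are right that the amalgamation step is the hard core; in the published proofs this is done not by directly building a Siggers term but by constructing the uniform witnesses of Proposition~\ref{prop:bulatov color functions} (the operations $v,g,h$) and then deriving a Siggers operation from those, so if you were to flesh this out that is the natural intermediate target. Nothing in your outline is wrong, but be aware that the paper treats the entire theorem as a black box.
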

This means that $\Pol(\fB)$ contains a Siggers operation if and only if  for all two elements $a,b \in B$ the set $\{a,b\} $ is a majority edge, an affine edge, or 
there is a semilattice edge on $\{a,b\}$ (see~\cite{BulatovConservative}).

\subsection{The Consistency Procedure}
We present in the following the $k$-consistency procedure. It was introduced in \cite{AtseriasBulatovDalmau1} for finite structures and extended to infinite structures in several equivalent ways, for example in terms of  Datalog programs, existential pebble games, and finite variable logics \cite{BodDalJournal}. Also see~\cite{Collapses} for recent results about the power of $k$-consistency for infinite-domain CSPs.

Let $\tau$ be a finite relational signature and let $k, l\in \mathbb{N}$ with $k<l$ and let $\fB$ be a fixed  $\tau$-structures with finitely many orbits of $l$-tuples, i.e., finitely many sets of the form $\{(\alpha(t_1),\dots,\alpha(t_l)) \mid \alpha \in \Aut(\fB) \}$ for some $t_1,\dots,t_l \in B$.
We define $\fB'$ to be the  expansion of $\fB$ by all orbits of $n$-tuples for every $n\leq l$, i.e., we add new relation symbols to the signature $\tau$ that denote these orbits. We write $\tau'$ for the extended signature of $\fB'$. 
 Let $\fA$ be an arbitrary finite $\tau$-structure. 
 A \emph{partial $l$-decoration of $\fA$} is a set $g$ of atomic $\tau'$-formulas such that
 \begin{enumerate}
     \item the variables of the formulas from $g$ are a subset of $A$ and denoted by $\var(g)$,
     \item $|\var(g)|\leq l$,
     \item the $\tau$-formulas in $g$ hold in $\fA$, where variables are interpreted as domain elements of the relational structure $\fA$,
     \item the conjunction over all formulas in $g$ is satisfiable in $\fB'$.
 \end{enumerate}

 A partial $l$-decoration $g$ of $\fA$ is called \emph{maximal} if there exists no partial $l$-decoration $h$ of $\fA$ with $\var(g)=\var(h)$ such that $g\subsetneq h$.
 We denote  the set of maximal partial $l$-decorations of $\fA$ by $\mathcal{R}^{l}_{\fA}$. 
 Note that a fixed finite set of at most $l$ variables, there are only finitely many partial $l$-decorations of $\fA$, because $\fB$ has by assumption finitely many orbits of $l$-tuples. Since this set is constant and can be precomputed, the set $\mathcal{R}^l_{\fA}$ 
can be computed efficiently.
  Then the \emph{$(k,l)$-consistency procedure for $\fB$} is the following algorithm.

\begin{algorithm}\label{alg:main}
\DontPrintSemicolon{}
\SetKwInOut{Input}{Input}

\SetKwHangingKw{Compute}{compute}

\Input{A finite $\tau$-structure $\fA$. }

\Compute {$\mathcal{H} := \mathcal{R}^{l}_{\fA}$.} 

\Repeat{$\mathcal{H}$ does not change}{ For every $f\in \mathcal{H}$ with $\var(f)\leq k$ and every $U\subseteq A$ with $|U|\leq l-k$, if there does not exist $g\in \mathcal{H}$ with $f\subseteq g$ and $U\subseteq\dom(g)$, then remove $f$ from $\mathcal{H}$.}

\uIf{$\mathcal{H}$ is empty}{\Return Reject.}
    \uElse{\Return Accept.}

\caption{$(k,l)$-consistency procedure for $\fB$} 
\end{algorithm}


Since $\mathcal{R}^{l}_{\fA}$ is of  polynomial size (in the size of $A$) and the  $(k,l)$-consistency procedure removes in step 3.\ at least one element from $\mathcal{R}^{l}_{\fA}$  the algorithm has a polynomial run time.
The $(k,k+1)$-consistency procedure is also called \emph{$k$-consistency procedure}.
The $(2,3)$-consistency procedure is called \emph{path consistency procedure}.\footnote{Some authors also call it the  \emph{strong path consistency algorithm}, because some forms of the  definition of the path consistency procedure are only equivalent to our definition of the path consistency procedure if $\fB$ has a transitive automorphism group.}

\begin{definition}
Let  $\fB$ be a relation $\tau$-structure as defined before.
   Then the \emph{$(k,l)$-consistency procedure for $\fB$ solves $\csp(\fB)$} if the satisfiable instances of $\csp(\fB)$ are precisely the accepted instances of the $(k,l)$-consistency procedure.
\end{definition}



	\begin{remark}\label{rem:NSP consistency}
		Let $\bA$ be a relation algebra with a normal representation $\fB$.
		We will in the following say that the $k$-consistency procedure solves $\Nsp(\bA)$  if it  solves $\Csp(\fB)$. This definition is justified by the correspondence of NSPs and CSPs from Theorem~\ref{thm: nsp csp 2}.
		\end{remark}

\begin{theorem}[\hspace{-0.5pt}\cite{Maltsev-Cond}]\label{theo:datalog finite} 
Let $\fB$ be a finite $\tau$-structure. Then the following statements are equivalent:
	\begin{enumerate}
		\item There exists $k\in \mathbb{N}$ such that the $k$-consistency procedure solves $\csp(\fB)$.
		\item $\fB$ has a $3$-ary weak near-unanimity polymorphism $f$ and a $4$-ary weak near-unanimity polymorphism $g$ such that:  $	\forall x,y,z \in B.~ f(y,x,x)=g(y,x,x,x).$
		
	\end{enumerate}
	
\end{theorem}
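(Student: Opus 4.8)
The plan is to read the statement as the combination of two deep results: the Bounded Width Theorem of Barto and Kozik~\cite{BoundedWidthJournal}, and the equivalence of several Maltsev conditions established in~\cite{Maltsev-Cond}. The first preliminary step I would take is to observe that both conditions are invariants of the algebra $(B;\Pol(\fB))$ of polymorphisms: solvability of $\csp(\fB)$ by $k$-consistency, for some $k$, and the existence of the displayed pair of polymorphisms both depend only on the clone $\Pol(\fB)$, and in fact only on the set of linear Maltsev conditions it satisfies. The reason is that bounded width is preserved by primitive positive interpretations and homomorphic equivalence, which on the algebraic side correspond exactly to clone homomorphisms; I would record this reduction first so that the remaining problem becomes purely algebraic, about the variety generated by $(B;\Pol(\fB))$.

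For the direction $(2)\Rightarrow(1)$, I would first show that the existence of a $3$-ary weak near-unanimity polymorphism $f$ and a $4$-ary weak near-unanimity polymorphism $g$ satisfying $f(y,x,x)=g(y,x,x,x)$ forces the variety generated by $(B;\Pol(\fB))$ to be congruence meet-semidistributive, equivalently to omit the unary and affine types (types $1$ and $2$) of tame congruence theory. The heart of the implication is then the Bounded Width Theorem itself: for a finite algebra omitting types $1$ and $2$, the $(2,3)$-consistency procedure, and hence the $k$-consistency procedure for a suitable $k$, correctly decides the CSP. I would follow the absorption-theoretic proof: one-sided soundness of the algorithm is immediate, and for completeness one formalizes the output of the procedure as a non-empty $(2,3)$-consistent (Prague) instance and shows, by induction on $|B|$, that any such instance over an $\mathrm{SD}(\wedge)$ algebra has a global solution, using absorbing subuniverses, linkedness, and the loop lemma to shrink the domains while preserving consistency.

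For the direction $(1)\Rightarrow(2)$, I would argue contrapositively via the standard obstruction to bounded width. If the variety generated by $(B;\Pol(\fB))$ admits a type-$1$ or type-$2$ quotient, then one can primitively positively interpret in $\fB$ a template whose CSP encodes systems of linear equations over a finite abelian group (the type-$2$ case) or a two-element structure all of whose polymorphisms are projections (the type-$1$ case). Since such CSPs are provably not solved by any $k$-consistency procedure, and bounded width is preserved under primitive positive interpretation, $\csp(\fB)$ could not be solved by $k$-consistency either. Thus bounded width implies that $(B;\Pol(\fB))$ omits types $1$ and $2$, i.e.\ is congruence meet-semidistributive. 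It then remains to convert this structural property $\mathrm{SD}(\wedge)$ into the explicit two-operation Maltsev condition, producing the weak near-unanimity operations of arities $3$ and $4$ and the linking identity $f(y,x,x)=g(y,x,x,x)$; this is exactly the term-manipulation carried out in~\cite{Maltsev-Cond}.

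The main obstacle is plainly the sufficiency direction, and within it the Bounded Width Theorem: the inductive absorption argument that a consistent instance over an $\mathrm{SD}(\wedge)$ algebra has a solution. Building the requisite machinery, namely Jónsson absorption, the connectivity and loop lemmas, and the analysis of linked components, is the deep technical core. Everything else, the invariance reduction at the start and the translation between $\mathrm{SD}(\wedge)$ and the explicit polymorphism identities at the end, is comparatively routine once that machinery is in place.
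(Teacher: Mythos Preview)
The paper does not prove this theorem at all: it is stated with a citation to~\cite{Maltsev-Cond} and used as a black box. Your proposal is a reasonable high-level outline of how the result is actually obtained in the literature (Barto--Kozik bounded width theorem for one direction, the Larose--Z\'adori / Feder--Vardi obstruction via affine CSPs for the other, and the translation of $\mathrm{SD}(\wedge)$ into the explicit $3$- and $4$-ary WNU identities from~\cite{Maltsev-Cond}), but there is nothing in the paper to compare it to beyond the bare citation.
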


Let $\fAo$ be the atom structure of a relation algebra $\bA$ with a normal representation $\fB$.
We finish this section by connecting the  solvability of $\Csp(\fAo)$ by $k$-consistency  (or its characterization in terms of polymorphims from the previous proposition) 
with the solvability of $\Csp(\fB)$  by $k$-consistency. By Remark~\ref{rem:NSP consistency} this gives a criterion for the solvability of $\NSP(\bA)$ by the $k$-consistency procedure.

The following theorem is from  \cite{Collapses}  building on ideas from \cite{BodMot-Unary}. We present it here in a specific formulation that already incorporates a correspondence between polymorphisms of the atom structure and canonical operations. For more details see \cite{BodirskyKnaeAAAI, BodKnaeFlexJournal}.

\begin{theorem}[\hspace{-0.5pt}\cite{Collapses}]\label{theo: datalog reduction}
Let $\fB$ be a normal representation of a finite relation algebra $\bA$ and $\fAo$ the atom structure $\bA$. If  $\Pol(\fAo)$ contains a $3$-ary weak near-unanimity polymorphism $f$ and a $4$-ary weak near-unanimity polymorphism $g$ such that $$	\forall x,y,z \in B.~ f(y,x,x)=g(y,x,x,x),$$
	then $\NSP(\bA)$ is solved by the $(4,6)$-consistency algorithm.
\end{theorem}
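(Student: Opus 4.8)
The plan is to transfer the polymorphism condition from the finite atom structure $\fAo$ to the infinite normal representation $\fB$, and then to invoke the collapse result of \cite{Collapses}. Note first that the hypothesis is precisely condition~(2) of Theorem~\ref{theo:datalog finite} applied to the \emph{finite} structure $\fAo$, so at the finite level it already certifies bounded width of $\csp(\fAo)$; the task is to lift this certificate to $\fB$ and to pin down the concrete level $(4,6)$. Throughout I use that, since $\fB$ is normal (Definition~\ref{defi:normalrep}), it is homogeneous, square, and fully universal; being homogeneous over the finite relational signature $A$ it is $\omega$-categorical, so the orbits of pairs under $\Aut(\fB)$ are exactly the relations $a^{\fB}$ for $a \in A_0$, and the orbits of $n$-tuples correspond precisely to the closed atomic $\bA$-networks on $n$ labelled nodes (full universality guarantees every such network is realised in $\fB$). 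This orbit picture is what lets polymorphisms of $\fB$ and of $\fAo$ talk to each other.

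Next I would make the correspondence between $\Pol(\fAo)$ and the \emph{canonical} polymorphisms of $\fB$ precise, as in \cite{BodirskyKnaeAAAI,BodKnaeFlexJournal}. Call $F \colon B^k \to B$ canonical if the orbit of $F(t^1,\dots,t^k)$ depends only on the orbits of its arguments; such an $F$ induces a map $\bar F \colon A_0^k \to A_0$ on atoms which preserves $E^{\fAo}$ and $R^{\fAo}$. The direction I actually need is the reverse: given $\bar f,\bar g \in \Pol(\fAo)$ as in the hypothesis, I build canonical polymorphisms $\hat f,\hat g$ of $\fB$ inducing them by amalgamation. Applying $\bar f$ coordinatewise to any $k$ closed input networks yields a network all of whose triangles are allowed triples (this is exactly $\bar f$ preserving $R^{\fAo}$), whose converses are consistent (by $E^{\fAo}$), and whose diagonals are correct (by conservativity and hence idempotency of $\bar f$, cf.\ Remark~\ref{rem: Aoconserv}); hence the image network is again closed and, by full universality, realised in $\fB$. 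A back-and-forth over a countable enumeration of $B^k$ then defines $\hat f$ on all of $B^k$ with $\overline{\hat f}=\bar f$, and likewise $\hat g$. The weak near-unanimity identities and the linking identity $f(y,x,x)=g(y,x,x,x)$ thus hold for $\hat f,\hat g$ on the orbit level, which is the form \cite{Collapses} consumes.

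With these canonical polymorphisms in hand I would invoke the collapse theorem of \cite{Collapses} (building on \cite{BodMot-Unary}): an $\omega$-categorical template preserved by a canonical ternary and a canonical quaternary operation that are weak near-unanimity on orbits and satisfy the linking identity is solved by the $(4,6)$-consistency procedure. Applied to $\fB$ this gives that $(4,6)$-consistency solves $\csp(\fB)$, and by Remark~\ref{rem:NSP consistency} (via Proposition~\ref{thm: nsp csp 2}) the same procedure solves $\Nsp(\bA)$, as claimed.

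I expect the technical heart to be two-fold. First, the faithful transfer of identities: a canonical operation a priori satisfies the weak near-unanimity and linking equations only up to $\Aut(\fB)$, so one must verify that the orbit-level identities are exactly what the collapse machinery requires (this is the reason the theorem is phrased in terms of $\Pol(\fAo)$ rather than of $\fB$ directly). Second, the concrete level $(4,6)$: whereas the finite-domain bounded-width hierarchy is known to collapse to relational width $(2,3)$, here the orbits of $l$-tuples of $\fB$ encode entire atomic networks, and tracking how this encoding inflates the required number of variables to $(4,6)$ is the substantive content of \cite{Collapses}. I would treat that level computation as a black box supplied by the cited result rather than reprove it, and concentrate the write-up on the lifting argument of the previous paragraph.
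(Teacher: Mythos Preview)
The paper does not give its own proof of this theorem: it is stated as a citation from \cite{Collapses}, with the remark that the formulation already absorbs the correspondence between $\Pol(\fAo)$ and canonical polymorphisms of $\fB$ worked out in \cite{BodirskyKnaeAAAI,BodKnaeFlexJournal}. Your sketch correctly unfolds exactly those two ingredients --- the lifting from $\fAo$ to canonical operations on the normal representation via full universality and homogeneity, followed by the black-box application of the collapse result of \cite{Collapses} to obtain the level $(4,6)$ --- so your approach coincides with what the paper indicates the proof consists of.
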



\section{The Undecidability of RBCP, CON, and PC}
In order to view the really big complexity problem (RBCP,~\cite{Hirsch}) as a decision problem, we need the following definitions. 
Let $\fra$ be the set of all relation algebras $\bf A$ whose domain is 
$\mathcal{P}(\{0,\ldots,n-1\})$, the set of all subsets of the first $n$ natural numbers, for some $n \in {\mathbb N}$. 

\begin{definition}[PN, CON, PC]
We define the following subsets of $\fra$: 
\begin{itemize}
    \item 
$\rbcp$ denotes the set of all ${\bf A}$ such that $\nsp({\bf A})$ is in P.
\item $\rbcp^c$ denotes $\fra \setminus \rbcp$.
\item $\kC$ denotes the set of all ${\bf A}$ such that $\nsp({\bf A})$ is solved by  $k$-consistency for some $k\in \mathbb{N}$.
\item $\pc$ denotes the set of all ${\bf A}$ such that $\nsp({\bf A})$ is solved by path consistency. 
\end{itemize}
\end{definition}

The following theorem is our first  result. Note that the undecidability of $\rbcp$ can be seen as a negative answer to Hirsch's Really Big Complexity Problem \cite{Hirsch}.
\begin{theorem}\label{thm:RBCP-undecidable}
{$\rbcp$ is undecidable, $\kC$ is undecidable, and $\pc$ is undecidable. }
\end{theorem}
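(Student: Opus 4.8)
The plan is to reduce the undecidable problem of representability of finite relation algebras to each of the three sets $\rbcp$, $\kC$, and $\pc$. By Hirsch and Hodkinson~\cite{HirschHodkinsonRepresentability} there is a computable sequence $(\bC_i)_{i \in \mathbb{N}}$ of finite relation algebras such that the set $\{ i \in \mathbb{N} : \bC_i \in \rra \}$ is undecidable; I will treat each $\bC_i$ as an element of $\fra$ after identifying its Boolean domain with a power set. The amplification comes from a fixed finite relation algebra $\bB$ whose network satisfaction problem is undecidable, which exists by Hirsch~\cite{Hirsch-Undecidable}; note that $\bB$ is necessarily representable, since otherwise $\nsp(\bB)$ would be the constant-false problem and hence decidable. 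Following the remark preceding Lemma~\ref{lem:products}, the key object is the union representation, and I will take $\mathbf{D}_i := \bC_i \times \bB$.

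First I would establish the dichotomy governing the reduction. By Lemma~\ref{lem:products}, $\mathbf{D}_i$ is representable if and only if $\bC_i$ is: if $\bC_i \in \rra$ with representation $\fB_i$ and $\fB$ is a representation of $\bB$, then the union representation $\fB_i \uplus \fB$ witnesses $\mathbf{D}_i \in \rra$; conversely, item~2 of Lemma~\ref{lem:products} extracts a representation of $\bC_i$ from any representation of $\mathbf{D}_i$. If $\bC_i \in \rra$, then Lemma~\ref{lem:reduce} provides a polynomial-time reduction from $\nsp(\bB)$ to $\nsp(\mathbf{D}_i)$, so $\nsp(\mathbf{D}_i)$ inherits undecidability from $\nsp(\bB)$ and in particular is not in $\Ptime$. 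If $\bC_i \notin \rra$, then $\mathbf{D}_i \notin \rra$, so no $\mathbf{D}_i$-network is satisfiable and $\nsp(\mathbf{D}_i)$ is the trivial constant-false problem, which lies in $\Ptime$. Hence $\mathbf{D}_i \in \rbcp$ if and only if $\bC_i \notin \rra$, and the undecidability of representability immediately yields that $\rbcp$ is undecidable.

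For $\kC$ and $\pc$ I would keep the same family $\mathbf{D}_i$ and combine the dichotomy above with the unconditional observation, recorded after Algorithm~\ref{alg:main}, that the $(k,l)$-consistency procedure runs in polynomial time: if the $k$-consistency procedure solves a network satisfaction problem, then that problem is in $\Ptime$. Applied to the representable instances, where $\nsp(\mathbf{D}_i)$ is undecidable and therefore outside $\Ptime$, this shows $\mathbf{D}_i \notin \kC$ and $\mathbf{D}_i \notin \pc$, with no dependence on whether $\Ptime = \NP$. It then remains to show that for the non-representable instances the trivial problem $\nsp(\mathbf{D}_i)$ is solved by the path consistency procedure, so that $\mathbf{D}_i \in \pc \subseteq \kC$ exactly when $\bC_i \notin \rra$; this would give $\mathbf{D}_i \in \kC \Leftrightarrow \mathbf{D}_i \in \pc \Leftrightarrow \bC_i \notin \rra$ and hence the undecidability of both $\kC$ and $\pc$ from the same reduction.

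The main obstacle is precisely this last step, namely the correct semantics of ``solved by path consistency'' for a relation algebra that has no representation. For such an algebra every network, including trivially consistent ones, is unsatisfiable, so one must argue that the procedure nonetheless rejects every network. I expect to settle this by unwinding the definition of the consistency procedure relative to the representation $\fB$: since condition~4 in the definition of a partial $l$-decoration requires satisfiability in (an expansion of) $\fB$, the absence of any representation makes the set of maximal partial $l$-decorations empty, so the procedure returns \emph{Reject} on every input and thus solves the constant-false problem. Making this reading precise---and checking that it is the reading intended for the sets $\kC$ and $\pc$ on all of $\fra$---is the delicate point; once it is fixed, the reduction from representability settles $\rbcp$, $\kC$, and $\pc$ simultaneously, and since $\pc$ is the path consistency case this also shows that solvability by the path consistency procedure is undecidable.
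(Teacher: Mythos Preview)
Your approach is essentially identical to the paper's: fix a finite representable relation algebra $\mathbf{U}$ with undecidable $\nsp(\mathbf{U})$, send $\mathbf{A}\mapsto \mathbf{A}\times\mathbf{U}$, and use Lemma~\ref{lem:products} plus Lemma~\ref{lem:reduce} to conclude that the product has a trivial NSP exactly when $\mathbf{A}$ is non-representable and an undecidable NSP otherwise. The paper's treatment of $\kC$ and $\pc$ is in fact a single sentence (``The proof for $\kC$ and $\pc$ is analogous; all we need is the fact that $\nsp(\mathbf{U})\notin\kC$ and $\nsp(\mathbf{U})\notin\pc$''), so you have already written more than the paper does.

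The definitional wrinkle you isolate in your final paragraph---what ``solved by path consistency'' should mean for a non-representable $\mathbf{A}$, where there is no structure $\fB$ to feed into the algorithm of Section~2.5---is genuine, and the paper does not spell it out either. Your reading (no representation $\Rightarrow$ no partial decorations $\Rightarrow$ the procedure rejects every input) is one coherent way to extend the definition to all of $\fra$; another is simply to declare that any decision procedure, and in particular the constant-reject algorithm, ``solves'' the constant-false problem. Either convention makes the non-representable case land in $\pc\subseteq\kC$, which is all the reduction needs. So your proof is correct modulo fixing that convention, and the paper is tacitly relying on the same kind of convention without comment.
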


In our undecidability proofs we reduce from the following well-known undecidable problem for relation algebras~\cite{HirschHodkinsonRepresentability}.

\begin{definition}[Rep]
Let $\rep$ be the computational problem of deciding for a given $\bf A \in \fra$ whether $\bf A$ has a representation. 
\end{definition}

In our proof we also use the fact that there exists a ${\bf U} \in \fra$ such that $\nsp({\bf U})$ is undecidable~\cite{Hirsch-Undecidable}. Note that then ${\bf U} \in \rep$, because the network satisfaction problem for non-representable relation algebras is trivial and therefore decidable.

\begin{proof}[Proof of Theorem~\ref{thm:RBCP-undecidable}]
We reduce the problem $\rep$ to $\rbcp^c$. Consider the following reduction $f\colon \fra \to \fra$.
For a given $\bf A \in \fra$, we define 
$f({\bf A}) := {\bf A}  \times {\bf U}$.

\medskip 
{\bf Claim 1.} If ${\bf A} \in \rep$ then $f({\bf A}) \in \rbcp^c$. 
If ${\bf A}$ is representable, then 
${\bf A} \times {\bf U}$ is representable by the first part of Lemma~\ref{lem:products}. 
Then there is a polynomial-time reduction from $\nsp({\bf U})$ to 
 $\nsp({\bf A} \times {\bf U})$ by Lemma~\ref{lem:reduce}. 
 This shows that $\nsp({\bf A} \times {\bf U})$ 
is undecidable, and hence $f({\bf A})$ is in $\rbcp^c$. 

\medskip 
{\bf Claim 2.} 
If ${\bf A} \in \fra \setminus \rep$ then $f({\bf A}) \in \rbcp$. 
If {\bf A} is not representable,
then ${\bf A} \times {\bf U}$ is not  representable by the second part of Lemma~\ref{lem:products}, and hence
$\nsp({\bf A} \times {\bf  U})$ is trivial and in P, and therefore in $\rbcp$. 

Clearly, $f$ is computable (even in polynomial time). Since $\rep$ is undecidable~\cite{HirschHodkinsonRepresentability}, this shows that $\rbcp^c$, and hence $\rbcp$, is undecidable as well. The proof for $\kC$ and $\pc$ is analogous; all we need is the fact that $\nsp({\bf U}) \notin 
\kC$ and $\nsp({\bf U}) \notin 
\pc$.
\end{proof}

\section{Tractability via \texorpdfstring{$k$}{}-Consistency}\label{sec: LocalCons}
We provide in this section a criterion that ensures solvability of NSPs by the $k$-consistency procedure (Theorem~\ref{theo:Datatractability}). 
A relation algebra $\mathbf{A}$ is called \emph{symmetric} if all its elements are symmetric, i.e., $\Breve{a}=a$ for every $a\in A$. Note that the relation $E$ of the atom structure is in this case simply the equality relation. 
We will see in the following that the assumption on $\mathbf{A}$ to be symmetric will simplify the atom structure $A_0$ of $\bA$, which has some advantages in the upcoming arguments.

\begin{definition}\label{defi:sigersbeh}
    Let $\bf A$ be a finite symmetric relation algebra with set of atoms  $A_0$. We say that $\bf A$ admits a \emph{Siggers behavior} if there exists an operation 
	$s\colon A_0^6\rightarrow A_0$ such that
	\begin{enumerate}
		\item $s$ preserves the allowed triples of $\mathbf{A}$,
		\item 	$\forall x_1,\ldots,x_6\in A_0.~s(x_1,\ldots,x_6)\in \{x_1,\ldots ,x_6\}$,
		\item $s$  satisfies the {Siggers identity}: 
		$\forall x,y,z \in A_0.~ s(x,x,y,y,z,z)=s(y,z,x,z,x,y).$
  \end{enumerate}
\end{definition}

\begin{remark}
For readers familiar with the theory of infinite-domain CSPs, we mention that if a relation algebra 
$\bf A$ has a normal 
representation $\fB$, then $\bf A$
admits a Siggers behavior if and only if $\fB$ has a pseudo-Siggers polymorphism which is canonical with respect to $\Aut(\fB)$; see~\cite{BodMot-Unary}. 
\end{remark}

We say that a finite symmetric relation algebra $\bf A$ \emph{has all $1$-cycles} if for every $a\in A_0$ the triple $(a,a,a)$ is allowed. Details on the notion of cycles from the relation algebra perspective can be found in \cite{Maddux2006-dp}. The relevance of the existence of 1-cycles for constraint satisfaction comes from the following observation.

\begin{lemma}\label{lem:injective implies cycle}Let $\bA$ be a finite symmetric relation algebra with a representation $\fB$ that has a binary injective polymorphism. Then $\bA$ has all $1$-cycles.
\end{lemma}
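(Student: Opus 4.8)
The plan is to prove, for every atom $a \in A_0$, that $a \le a \circ a$; by definition this is exactly the assertion that $(a,a,a)$ is an allowed triple, so establishing it for all $a$ yields all $1$-cycles. I would first dispose of the subidentity atoms: if $a \le \id$, then $a$ is idempotent under composition ($a\circ a = a$), so $a \le a\circ a$ holds trivially. Hence the real content is for a \emph{diversity} atom, i.e.\ one with $a \not\le \id$, equivalently (since $a$ is an atom) $a \cap \id = 0$.

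For such an $a$ the idea is to manufacture a genuine $a$-triangle inside $\fB$ out of a single $a$-edge, using the injective polymorphism. Since $a \neq 0$ and $\fB$ is a representation, there is a pair $(p,q) \in a^{\fB}$, and $p \neq q$ because $a \cap \id = 0$. As $\bA$ is symmetric, $\breve a = a$, so $a^{\fB}$ is a symmetric relation and $(q,p) \in a^{\fB}$ as well. Let $g$ be the given binary injective polymorphism of $\fB$ and set $u := g(p,q)$, $v := g(q,p)$, $w := g(p,p)$; since $g$ is injective and the argument pairs $(p,q),(q,p),(p,p)$ are pairwise distinct, the points $u,v,w$ are pairwise distinct.

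Then I would read off the three edges. Applying $g$ to the rows $(p,q),(q,p)\in a^{\fB}$ yields $(u,v) = (g(p,q),g(q,p)) \in a^{\fB}$. The other two edges are the crux. Since $\id \cup a$ is an element of $\bA$, it is a relation symbol of the $A$-structure $\fB$, so $g$ preserves $(\id\cup a)^{\fB}$; feeding the rows $(p,p)\in\id^{\fB}$ and $(q,p)\in a^{\fB}$ gives $(u,w) = (g(p,q),g(p,p)) \in (\id\cup a)^{\fB}$, and symmetrically $(v,w)\in(\id\cup a)^{\fB}$. Injectivity of $g$ forces $u\neq w$ and $v \neq w$, which excludes the $\id$ alternative and upgrades both memberships to $(u,w),(v,w)\in a^{\fB}$. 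Thus $u,v,w$ form an $a$-triangle, and the witness $v$ shows $(u,w)\in(a\circ a)^{\fB}$, so $a^{\fB}\cap(a\circ a)^{\fB}\neq\emptyset$; as distinct atoms are represented by disjoint relations, this forces the atom $a$ to occur in $a\circ a$, i.e.\ $a \le a\circ a$.

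I expect the main obstacle to be spotting the right relation to preserve: the whole argument hinges on using the \emph{non-atomic} relation $\id\cup a$ together with injectivity, rather than the atoms alone — preserving $\id$ and $a$ separately is useless here, because the relevant rows are ``mixed'' (one in $\id^{\fB}$, one in $a^{\fB}$) and the preserved relations are closed under intersection but not under union. Symmetry of $\bA$ is what makes the row $(q,p)$ available, and injectivity is precisely the ingredient that rules out the degenerate ``matching'' situation in which $a\circ a$ would collapse into $\id$ and no $1$-cycle would exist. Note that this proof uses only that $\fB$ is a representation (not full universality or homogeneity), so normality of $\fB$ is more than is strictly needed.
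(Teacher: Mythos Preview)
Your argument is correct and follows essentially the same approach as the paper: apply the binary injective polymorphism to three pairwise distinct argument pairs built from a single $a$-edge, then use preservation of the non-atomic relation $\id\cup a$ together with injectivity to upgrade the two ``mixed'' edges to $a$-edges, producing an $a$-triangle. The paper's proof is terser --- it applies $i$ to the columns of $(x_1,x_1,x_2)$ and $(y_1,y_2,y_2)$ and leaves the $\id\cup a$ step implicit --- and uses the points $i(x_1,y_1),i(x_1,y_2),i(x_2,y_2)$ rather than your $g(p,q),g(q,p),g(p,p)$, but the mechanism is identical; your version simply spells out the key preservation step and separates off the (trivial) subidentity case.
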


\begin{proof} Let $f$ be a {binary} injective polymorphism of $\fB$.
Arbitrarily choose $a\in A_0$. If $a \leq \id$, then $(a,a,a)$ is clearly an allowed triple, so suppose that this is not the case, i.e., $a \cap \id = 0$. 
	Consider $x_1,x_2, y_1,y_2\in B$ such that $a^{\fB}(x_1,x_2) $ and $a^{\fB}(y_1,y_2)$. 
	Then $r_0 := (f(x_1,x_2),f(x_2,y_2)) \in a^{\fB}$ by definition. Since $f$ preserves $a \cup \id$, we have that
	\begin{align*}
	& r_1 := (f(x_1,y_1),f(x_1,y_2)) \in (a \cup \id)^{\bB} \\
	\text{and } \quad 
	& r_2 := (f(x_1,y_1),f(x_2,y_1)) \in (a \cup \id)^{\bB}.
	\end{align*} 
	We now consider $t := f((x_1,x_1,x_2),(y_1,y_2,y_2)) = (f(x_1,y_1),f(x_1,y_2),f(x_2,y_2))$. 
	Since 
	$a \cap \id = 0$, 
	the injectivity of $f$ implies that 
	both $r_1$ and $r_2$ lie in $a$ as well.  
	Hence, $t$ witnesses that $(a,a,a)$ is an allowed triple. 
\end{proof}

\begin{theorem}\label{theo:Datatractability} 
Let $\bf A$ be a finite symmetric relation algebra  with a normal representation $\fB$. 
Suppose that the following holds:
\begin{enumerate}
    \item  $\bf A$ has all $1$-cycles.
    \item $\bf A$ admits a Siggers behavior.
\end{enumerate}
	Then the $\nsp(\mathbf{A})$ can be solved by the $(4,6)$-consistency procedure.
\end{theorem}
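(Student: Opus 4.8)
The plan is to reduce the statement to the claim that the atom structure $\fAo$ has bounded width, and then to reduce the analysis of $\fAo$ — which carries the ternary relation $R^{\fAo}$ of allowed triples — to a \emph{binary} conservative structure to which Kazda's theorem \cite{Kazda2015} applies. First I would record that the Siggers-behavior operation $s$ is in fact a Siggers polymorphism of $\fAo$: it preserves $R^{\fAo}$ by the first item of Definition~\ref{defi:sigersbeh}, it preserves every unary relation $x^{\fAo}$ because it is conservative (second item), and since $\mathbf{A}$ is symmetric the relation $E^{\fAo}$ is the diagonal, which every conservative operation preserves. As $\fAo$ is conservative (Remark~\ref{rem: Aoconserv}), Theorem~\ref{2elemtsubalgebrassiggers} applies and puts us in its first case: $\csp(\fAo)$ is in $\mathrm{P}$ and no two-element subset of $A_0$ is a projection pair, so every pair $\{a,b\}$ is a majority edge, an affine edge, or carries a semilattice edge. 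By Theorem~\ref{theo: datalog reduction} it now suffices to exhibit a $3$-ary and a $4$-ary weak near-unanimity polymorphism of $\fAo$ tied by $f(y,x,x)=g(y,x,x,x)$, and by Theorem~\ref{theo:datalog finite}, applied to the finite structure $\fAo$, the existence of such a pair is equivalent to $\fAo$ having bounded width. Hence everything reduces to proving that $\fAo$ has bounded width.

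Given the Siggers polymorphism already in hand, bounded width of a conservative structure is equivalent to the absence of affine edges (the elegant description discussed in the introduction; cf.\ Theorem 2.17 in \cite{BulatovConservative}). So it suffices to show that, under the all-$1$-cycles hypothesis, no two-element subset $\{a,b\}\subseteq A_0$ is an affine edge. To bring Kazda's binary theorem into play, I would pass to the binary conservative structure $\mathfrak{C}$ on domain $A_0$ whose relations are all unary relations together with all binary relations preserved by $\Pol(\fAo)$; by Proposition~\ref{theo:BKKR} these are exactly the binary relations pp-definable in $\fAo$, such as the sections $\{(x,y)\mid (x,y,c)\in R^{\fAo}\}$ for $c\in A_0$. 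By construction $\Pol(\fAo)\subseteq \Pol(\mathfrak{C})$, so $s$ is still a conservative Siggers polymorphism of $\mathfrak{C}$. Since $\mathfrak{C}$ is binary and conservative, Kazda's theorem yields a majority polymorphism of $\mathfrak{C}$, and in particular $\mathfrak{C}$ has no affine edge.

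The heart of the proof — and the step I expect to be hardest — is transferring this back to $\fAo$, and this is exactly where the $1$-cycle assumption is used. The difficulty is that $\mathfrak{C}$ may have strictly more polymorphisms than $\fAo$, so a priori an affine edge of $\fAo$ could be ``upgraded'' to a majority edge of $\mathfrak{C}$, and the absence of affine edges in $\mathfrak{C}$ would not by itself say anything about $\fAo$. The plan is to rule this out. Using total symmetry of $R^{\fAo}$ together with the fact that $(a,a,a)$ and $(b,b,b)$ are allowed, one checks that if $\{a,b\}$ were an affine edge then the restriction of $R^{\fAo}$ to $\{a,b\}$ would be preserved by a minority operation, which forces it (identifying $\{a,b\}$ with $\mathbb{F}_2$) to be an affine subspace of $\mathbb{F}_2^3$; among the totally symmetric options containing the two diagonal triples only the diagonal $\{(a,a,a),(b,b,b)\}$ and the full cube $\{a,b\}^3$ survive, and a majority operation on $\{a,b\}$ preserves both. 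Leveraging this, I would argue that the majority polymorphism of $\mathfrak{C}$ can be combined with $s$ and corrected on the ``mixed'' inputs — where the $1$-cycles guarantee that the triples needed to witness preservation of $R^{\fAo}$ are allowed — to produce an operation of $\fAo$ whose restriction to $\{a,b\}$ is still a majority operation. Consequently $\{a,b\}$ is a majority edge of $\fAo$, contradicting that it is an affine edge.

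Putting the pieces together: $\fAo$ has no affine edge, hence bounded width by the conservative characterization, hence the weak near-unanimity pair of Theorem~\ref{theo: datalog reduction} by Theorem~\ref{theo:datalog finite}, which finally gives that $\nsp(\mathbf{A})$ is solved by the $(4,6)$-consistency procedure. The main obstacle is the $1$-cycle-based transfer of the third paragraph: making precise how a majority polymorphism of the binary reduct $\mathfrak{C}$ is promoted to a polymorphism of $\fAo$ that still preserves the \emph{ternary} relation $R^{\fAo}$, since this is the only place where the ternary constraints and the all-$1$-cycles hypothesis genuinely interact.
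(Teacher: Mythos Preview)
Your overall strategy matches the paper's: reduce to showing that the atom structure $\fAo$ has no affine edge, pass to a binary conservative structure to invoke Kazda's theorem, then transfer the absence of affine edges back to $\fAo$ and conclude via Proposition~\ref{prop:no affine implies bounded w} and Theorem~\ref{theo: datalog reduction}. The first two paragraphs of your proposal are essentially correct and close to what the paper does.

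The genuine gap is exactly where you flag it, but your proposed mechanism for the transfer is not the paper's, and as stated it does not go through. First, Kazda's theorem (Theorem~\ref{thm:kazda}) does \emph{not} produce a global majority polymorphism of the binary structure; it only asserts that there is no affine edge. If the binary structure has semilattice edges, a global majority polymorphism need not exist, so your plan of ``correcting the majority polymorphism of $\mathfrak{C}$'' starts from an object that may not be there. Second, your analysis of $R^{\fAo}$ restricted to $\{a,b\}^3$ is correct but only addresses triples of inputs with all entries in $\{a,b\}$; preservation of $R^{\fAo}$ can fail precisely on mixed inputs, and there your sketch (``combine with $s$ and correct on the mixed inputs'') gives no construction. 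The paper's Example~\ref{examp:Komega} shows that a majority polymorphism of the binarisation need not preserve $R^{\fAo}$, so real work is needed here.

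The paper's route for the transfer is quite different. It uses a specific binarisation $\fAob$ (only the sections $R_a$ and their unions) and proves $\Pol^{(2)}(\fAob)=\Pol^{(2)}(\fAo)$ (Lemma~\ref{lem:p2}), so the two structures share the same semilattice edges. It then takes Bulatov's colour functions $v,w\in\Pol(\fAob)$ from Proposition~\ref{prop:bulatov color functions}, defines the single operation
\[
u(x,y,z)=w\bigl(v(v(x,y),z),\,v(v(y,z),x),\,v(v(z,x),y)\bigr),
\]
and proves directly that $u\in\Pol(\fAo)$ (Theorem~\ref{theo: u from left to right}). That proof is where the $1$-cycle hypothesis (and the identity atom $\id$) are actually exploited, via a chain of combinatorial lemmas about $R^{\fAo}$: for every pair, $(a,a,b)\in R$ or $(a,b,b)\in R$ (Lemma~\ref{lem: aaa in R}); every forbidden triple on three distinct atoms forces some forbidden $(x,x,y)$ (Lemma~\ref{lem:taylor}); if $(a,a,b)\notin R$ then $(a,b)$ is a semilattice edge and $(b,a)$ is not (Lemma~\ref{lem:aabforb}); and a ``no-cycle'' constraint on semilattice edges (Lemma~\ref{lem:no-cycle}). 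These, together with an analysis of $u$ on three-element sets (Lemma~\ref{lem:cyclic}), are what make the transfer succeed. The Siggers operation $s$ plays no role in the transfer beyond guaranteeing that the edge colouring exists; the work is done by $v$, $w$, and the structure of $R^{\fAo}$.
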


We will outline the proof of Theorem~\ref{theo:Datatractability} and cite some results from the literature that we will use.
Assume that $\bA$ is a  finite symmetric relation algebra that satisfies the assumptions of Theorem~\ref{theo:Datatractability}.  Since $\bA$ admits a Siggers behavior there exists an operation $s\colon A_0^6\rightarrow A_0$ that is by 1.\ and 2.\  in Definition~\ref{defi:sigersbeh} a polymorphism of the atom structure $\fAo$ (see Paragraph~\ref{subsec: atomstructure}). 
By Remark~\ref{rem: Aoconserv},  $\Pol(\fAo)$ is a conservative operation clone.
Recall the notion of  semilattice, majority, and affine edges for conservative clones (cf.~Definition~\ref{defi:edgescolor}). 
Since $s$ is by 3.\  in Definition~\ref{defi:sigersbeh} a Siggers operation, Theorem~\ref{2elemtsubalgebrassiggers} implies that every edge in $\fAo$ is  semilattice, majority, or affine. 

Our goal is to show that there are no affine edges in $\fAo$, since this implies {that there exists $k\in \mathbb{N}$ such that
 $\Csp(\fAo)$ can be solved by $k$-consistency~\cite{BulatovConservative}. We present this fact here via the  characterization of $(k,l)$-consistency  in terms of weak near-unanimity polymorphisms from Theorem~\ref{theo:datalog finite}.

\begin{proposition}[cf.\  Corollary 3.2 in~\cite{Kazda2015}]\label{prop:no affine implies bounded w} 
	Let $\fAo$ be a finite conservative relational structure with a Siggers polymorphism and no affine edge. Then	$\fAo$ has a $3$-ary weak near-unanimity polymorphism $f$ and a $4$-ary weak near-unanimity polymorphism $g$ such that $$	\forall x,y,z \in B.~ f(y,x,x)=g(y,x,x,x).$$
\end{proposition}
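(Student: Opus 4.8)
The plan is to work entirely with the conservative polymorphism clone $\Pol(\fAo)$ and to read Kazda's result as a statement about such clones. By Remark~\ref{rem: Aoconserv}, $\Pol(\fAo)$ is conservative, so every two-element subset $\{a,b\} \subseteq A_0$ carries one of the three edge colors of Definition~\ref{defi:edgescolor}. Since $\fAo$ has a Siggers polymorphism by hypothesis, it cannot be in the hard case of Theorem~\ref{2elemtsubalgebrassiggers}, and the concluding remark of that theorem then guarantees that each $\{a,b\}$ is in fact a semilattice, majority, or affine edge. The assumption ``no affine edge'' leaves only semilattice and majority edges, which is exactly the local situation that Kazda's analysis is designed to exploit.

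First I would isolate the two identities as the target: by Theorem~\ref{theo:datalog finite} the existence of a $3$-ary weak near-unanimity $f$ and a $4$-ary weak near-unanimity $g$ with $f(y,x,x)=g(y,x,x,x)$ is precisely the canonical Maltsev condition characterising bounded width, so it suffices to produce such $f$ and $g$ inside $\Pol(\fAo)$. Kazda's Corollary 3.2 constructs exactly these two operations from a finite conservative algebra whose every edge is semilattice or majority (equivalently, a conservative Taylor algebra omitting the affine edge). Applying that construction to $\Pol(\fAo)$, using the edge data established in the previous step, yields the desired $f$ and $g$; since they are polymorphisms of the conservative structure $\fAo$ they are automatically conservative, hence idempotent, as required.

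The main obstacle is that Kazda phrases his result for \emph{binary} conservative relational structures, whereas $\fAo$ additionally carries the ternary relation $R^{\fAo}$ of allowed triples, which a priori could cut down the clone and invalidate a naive appeal to the binary statement. I expect the cleanest resolution is to observe that Kazda's construction of $f$ and $g$ is purely clone-theoretic: its only inputs are conservativity, the coloring of the two-element subalgebras of $\Pol(\fAo)$, and the semilattice and majority operations witnessing those edges, none of which refer to the arity of the relations defining the structure. Thus the construction should apply verbatim to the conservative clone $\Pol(\fAo)$, independently of the presence of $R^{\fAo}$. The delicate point to verify carefully is exactly this transfer --- that every ingredient of Kazda's argument survives when relations of arity greater than two are allowed --- and I would check that each such ingredient is genuinely intrinsic to the conservative clone and its small subalgebra structure rather than to a binary reduct.
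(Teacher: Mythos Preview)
Your approach is correct in substance and matches the paper, which gives no proof of this proposition and simply records it as a citation to Kazda. Your unpacking---verify that the clone is conservative, that the Siggers term forces every two-element subset to carry one of the three edge colours, and that the absence of affine edges leaves only semilattice and majority edges, then invoke Kazda's construction of the linked weak near-unanimity pair---is exactly the intended content of that citation.

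What you should drop is the ``main obstacle'' you describe, because it is not one. You are conflating two distinct results of Kazda. His Corollary~3.2, the one cited here, is a statement about conservative \emph{clones}: the edge colouring of Definition~\ref{defi:edgescolor} and the uniform witnesses of Proposition~\ref{prop:bulatov color functions} depend only on $\Pol(\fAo)$, and the assembly of the $3$-ary and $4$-ary weak near-unanimity terms from those witnesses is entirely clone-internal. No hypothesis on the arity of the relations defining $\fAo$ enters. You in fact arrive at precisely this observation in your final paragraph, but you present it as the resolution of a difficulty rather than as the reason there is none. The binary hypothesis lives in a \emph{different} result of Kazda---his Theorem~4.5, quoted in the paper as Theorem~\ref{thm:kazda}---which is used to \emph{establish} that a binary conservative Taylor structure has no affine edges, not to \emph{exploit} that absence. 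In the paper's overall argument the binarity issue is genuine, but it is handled elsewhere (via the binarisation $\fAob$ of Section~\ref{sec:bina} and the transfer results of Section~\ref{sec: no affine}), not in the proposition you are asked to justify.
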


Note that the existence of the weak near-unanimity polymorphisms  from  Proposition~\ref{prop:no affine implies bounded w} would finish the proof of Theorem~\ref{theo:Datatractability}, because Theorem~\ref{theo: datalog reduction} implies that in this case $\nsp(\mathbf{A})$ can be solved by the $(4,6)$-consistency procedure.  
We therefore want to prove that there are no affine edges in $\fAo$. We start in Section~\ref{sec:atomstructure datalgo} by analyzing the different types of edges in the atom structure $\fAo$ and obtain results about their appearance. 

Fortunately, there is the following result by Alexandr Kazda about binary}  structures with a conservative polymophism clone. A \emph{binary structure} is a structure  where all relations have arity at most two.

\begin{theorem}[Theorem 4.5 in~\cite{Kazda2015}]
	\label{thm:kazda} 
	If $\fA$ is a finite  binary conservative relational structure  with a 
	Siggers polymorphism, 
	 then $\fA$ has no affine edges. 
\end{theorem}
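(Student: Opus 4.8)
The plan is to argue by contradiction: assume $\fA$, with domain $A$, has an affine edge $\{a,b\}$, and show that $\{a,b\}$ must in fact be a majority edge, contradicting Definition~\ref{defi:edgescolor}. By definition of an affine edge there is a polymorphism whose restriction to $\{a,b\}$ is a minority operation, and there is no semilattice edge on $\{a,b\}$. First I would identify $\{a,b\}$ with $\mathbb{Z}_2$ via $a\mapsto 0$, $b\mapsto 1$, so that the minority operation restricts to $(x,y,z)\mapsto x+y+z$. Since every relation of $\fA$ is binary, for each relation $R$ of $\fA$ the trace $R\cap\{a,b\}^2$ is a binary relation on $\mathbb{Z}_2$ preserved by this minority, hence a coset of a subgroup of $(\mathbb{Z}_2)^2$, i.e.\ an affine subspace. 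I would first record exactly which such subspaces arise and how they sit inside $A^2$.

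The key observation I would exploit is an asymmetry between majority and minority on a two-element set: \emph{every} binary relation on $\{a,b\}$ is preserved by the majority (median) operation, since any relation on at most two Boolean variables is $2$-decomposable (a conjunction of clauses of size at most two), whereas the minority does not preserve all of them. Consequently the traces $R\cap\{a,b\}^2$ impose \emph{no} obstruction to majority behavior on $\{a,b\}$: the median on $\{a,b\}$ is locally compatible with every relation of $\fA$. Thus the only way $\{a,b\}$ can fail to be a majority edge is a global obstruction, namely that no polymorphism of $\fA$ restricts to the median on $\{a,b\}$. I would also note that this obstruction cannot be removed by the Siggers operation alone, because a two-element affine algebra already carries a conservative Siggers term while admitting no majority; hence the binary relations linking $\{a,b\}$ to the rest of $A$ must be used essentially.

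To convert this local compatibility into a genuine polymorphism, I would invoke the structure theory of conservative algebras underlying Theorem~\ref{2elemtsubalgebrassiggers} (Bulatov's edge-colouring together with the rigidity and propagation properties of affine edges and the existence of polymorphisms acting canonically as semilattice, majority, or minority on edges of the corresponding colour). The plan is to use these tools either to build a single polymorphism of $\fA$ whose restriction to $\{a,b\}$ is the median, directly contradicting that $\{a,b\}$ is affine, or to extract from the binary relations of $\fA$ a non-rectangular configuration on three or more elements that the minority behavior would force but that a purely binary conservative structure with a Siggers polymorphism cannot support. Either route yields the desired contradiction and shows that $\fA$ has no affine edges.

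The main obstacle is precisely this local-to-global step: one must control the behaviour of the constructed operation simultaneously on \emph{all} pairs of $A$, managing how the affine edge interacts with neighbouring semilattice and majority edges under the conservativity constraint while keeping every binary trace $2$-decomposable. This is where the finiteness, conservativity, and—crucially—binarity hypotheses all enter, and it constitutes the technical heart of the argument; by contrast the reductions in the first two paragraphs are comparatively routine.
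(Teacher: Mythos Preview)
The paper does not prove this statement; it is quoted verbatim as Theorem~4.5 of Kazda's paper and used as a black box in the proof of Theorem~\ref{theo:Datatractability}. So there is no in-paper argument to compare against---the benchmark is Kazda's original proof.

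Your proposal correctly isolates the elementary starting observation: every binary relation on a two-element set is preserved by the median, so the traces $R\cap\{a,b\}^2$ never obstruct majority behaviour on $\{a,b\}$. But you then explicitly label the remaining local-to-global step ``the technical heart of the argument'' and do not carry it out. That step \emph{is} the theorem. Saying you would ``invoke the structure theory of conservative algebras underlying Theorem~\ref{2elemtsubalgebrassiggers}'' is not an argument: Bulatov's edge machinery (in particular Proposition~\ref{prop:bulatov color functions}) applies to conservative structures of \emph{arbitrary} arity and happily coexists with affine edges there, so it cannot by itself explain why binarity eliminates them. You need a concrete mechanism that actually uses the hypothesis that every relation has arity at most two, and none is supplied.

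Kazda's argument does not proceed by directly manufacturing a global majority on $\{a,b\}$. Roughly, it analyses how binary relations can link an affine edge to the other edges of the structure, shows that affine behaviour must propagate in a rigid way through such binary links (this is precisely where arity $\leq 2$ enters), and then obtains a contradiction with the Siggers term. Your two proposed ``routes''---build a global median, or extract a forbidden non-rectangular configuration---are plausible endpoints, but without the propagation analysis neither can be reached. As written, the proposal is an outline in which the proof-carrying part is still a placeholder.
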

 
Notice that we cannot simply apply this theorem to the atom structure $\fAo$, since the maximal arity of its relations is three. We circumvent this obstacle by defining for $\fAo$ a closely related binary structure $\fAob$, which we call the ``binarisation of $\fAo$''. In Section~\ref{sec:bina} we give the formal definition of $\fAob$ and investigate how $\Pol(\fAo)$ and $\Pol(\fAob)$ relate to each other.
It follows from these observations that $\fAob$ does not have an affine edge. In other words, it only has semilattice and majority edges. The crucial step in our proof is to transfer a witness of this fact to $\fAo$ and conclude that also $\fAo$  has no affine edge. This is done in Section~\ref{sec: no affine}.

\subsection{The Atom Structure}\label{sec:atomstructure datalgo}
\label{sect:atomstruct}
For the sake of notation, 
we make some global assumptions for Sections~\ref{sec:atomstructure datalgo}--\ref{sec: no affine}. Let $\bA$ be a finite relation algebra that satisfies the assumptions from Theorem~\ref{theo:Datatractability}. 
We denote by $\fAo$ the atom structure of $\bA$ (Definition~\ref{def: atom str}). Since $\bA$ is a symmetric relation algebra, the relation $R^{\fA_0}$ is totally symmetric. Furthermore, we can drop the binary relation $E^{\fA_0}$, since it consists only of loops and does not change the set of polymorphisms.  
Let $s \in \Pol(\fAo) $ be the Siggers operation that exists by the assumptions in Theorem~\ref{theo:Datatractability}. 
This implies by Theorem~\ref{2elemtsubalgebrassiggers} for every $a,b\in A_0$ that the set $\{a,b\} $ is a majority edge or an affine edge, or that 
there is a semilattice edge on $\{a,b\}$. 
The different types of edges are witnessed by certain operations that we get from Proposition~\ref{prop:bulatov color functions}: there exist a binary operation $f \in \Pol(\fAo)$ and ternary operations $g, h\in \Pol(\fAo)$ such that for every two element subset $C$ of $A_0$,

\begin{itemize}
	\item $f|_{C}$ is a semilattice operation whenever $C$ has a semilattice edge, and $f|_{C}(x,y)=x$ otherwise;
	\item $g|_{C}$ is a majority operation if $C$ is  a majority edge, $g|_{C}(x,y,z)=x $ if $C$ is affine and $g|_{C}(x,y,z)=f|_{C}( f|_{C}(x,y),z)$ if $C$ has a semilattice edge;
	\item $h|_{C}$ is a minority operation if $C$ is  an affine edge, $h|_{C}(x,y,z)=x $ if $C$ is majority and $h|_{B}(x,y,z)=f|_{C}( f|_{C}(x,y),z)$ if $C$ has a semilattice edge.
\end{itemize}

We will fix these operations and introduce the following terminology. 
A tuple $(a,b)\in A_0$ is called \emph{$f$-sl} if $f(a,b)=b=f(b,a)$ holds.
Next, we prove several important properties 
of the relation $R$: that it must contain certain triples (Lemma~\ref{lem: aaa in R}),
that it must not contain certain other triples (Lemma~\ref{lem:taylor}), 
and that it is affected by the presence of semilattice edges in $\bA_0$ (Lemma~\ref{lem:aabforb} and Lemma~\ref{lem:no-cycle}).

\begin{lemma}\label{lem: aaa in R}The relation $R$ of the atom structure $\fAo$ has the following properties:
\begin{itemize}
    \item for all $a\in A_0$ we have $(a,a,a) \in R$.
	\item for all $a, b \in A_0$ we have $(a,a,b) \in R$ or $(a,b,b)\in R$; 
	
\end{itemize}

\end{lemma}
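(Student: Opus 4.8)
The plan is to treat the two bullets separately, as they have very different character. The first bullet is immediate: by the definition of the atom structure (Definition~\ref{def: atom str}), $(a,a,a)\in R$ means precisely that $(a,a,a)$ is an allowed triple of $\bA$, i.e. $a\le a\circ a$, and this is exactly the hypothesis that $\bA$ has all $1$-cycles (assumption~1 of Theorem~\ref{theo:Datatractability}). So nothing is to be done there.

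For the second bullet I would first reformulate it. Since $\bA$ is symmetric, $R$ is totally symmetric, so $(a,a,b)\in R$ holds iff $(a,b,a)\in R$ holds iff $a\le a\circ b$, while $(a,b,b)\in R$ holds iff $b\le a\circ b$. Hence the claim is equivalent to: the set of atoms below $a\circ b$ meets $\{a,b\}$, i.e. $a\le a\circ b$ or $b\le a\circ b$. Before attacking this I would argue that $\bA$ is \emph{integral}, i.e. $\id\in A_0$: if $\id$ were a join of several sub-identity atoms, then squareness of the normal representation $\fB$ would split the domain into blocks, and a symmetric atom joining two distinct blocks would have $a\circ a$ landing back inside the diagonal blocks, disjoint from $a$, contradicting $a\le a\circ a$. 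Granting integrality, the cases $a=b$, $a=\id$, and $b=\id$ are all trivial (using $\id\circ x=x$, the $1$-cycle, and total symmetry; e.g. $(a,\id,a)\in R$ gives $(a,a,\id)\in R$). So I may assume $a\ne b$ are distinct diversity atoms. Integrality gives no zero divisors, so $a\circ b\ne 0$ and there is an atom $c\le a\circ b$, that is $(a,b,c)\in R$; if $c\in\{a,b\}$ we are done.

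The remaining case, where $c\notin\{a,b\}$ is a genuine third atom, is the heart of the argument, and I would resolve it with the conservative edge machinery. Since $\Pol(\fAo)$ is conservative (Remark~\ref{rem: Aoconserv}) and $s$ is a Siggers polymorphism, Theorem~\ref{2elemtsubalgebrassiggers} and Proposition~\ref{prop:bulatov color functions} supply uniform operations $f,g,h\in\Pol(\fAo)$ realising the semilattice/majority/minority behaviour on every two-element set. The engine is that these operations preserve $R$: applying them coordinatewise to tuples already known to lie in $R$ (such as $(a,b,c)$ and its permutations, together with $(a,a,\id)$, $(b,b,\id)$, $(a,a,a)$, $(b,b,b)$) must again yield tuples of $R$; conversely, if such an application computes a triple that is \emph{forbidden}---for instance $(a,b,\id)\notin R$ (which holds because $\id\le a\circ b$ would force $b=\Breve a=a$), or the triples $(a,b,a),(a,b,b)$ under the hypothesis that both fail---we reach a contradiction. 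Running this folding systematically should ``pull the third atom $c$ down'' into $\{a,b\}$ and force $(a,a,b)\in R$ or $(a,b,b)\in R$.

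The hard part will be the bookkeeping of this case analysis: one must split on the edge type (semilattice, majority, or affine) of each of the pairs $\{a,c\}$, $\{b,c\}$, $\{a,b\}$, and on the orientation of the semilattice edges. The delicate configuration is the one in which $c$ sits ``on top'' of both $a$ and $b$ (semilattice edges oriented towards $c$), since then the naive application of $f$ only reproduces $c$; excluding this configuration is exactly where I expect the interaction between semilattice edges and the relation $R$ to be needed, i.e. the material developed in the subsequent lemmas. My expectation is that, once these orientations are controlled, each case terminates with a single application of $f$ (or $g$, $h$) to a pair of known $R$-tuples that either produces $(a,a,b)$/$(a,b,b)$ directly or produces a forbidden triple such as $(a,b,\id)$, closing the argument.
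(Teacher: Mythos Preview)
Your treatment of the first bullet is exactly what the paper does.

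For the second bullet, your route diverges from the paper's and, as you present it, does not close. The paper never introduces a third diversity atom $c\le a\circ b$. Instead it pivots through the identity: the triples $(a,a,\id)$ and $(\id,b,b)$ are always in $R$ (since $\id\le a\circ\Breve a=a\circ a$ for symmetric $\bA$), so the only edge analysis needed is for $\{a,\id\}$ and $\{b,\id\}$. One first shows that $\{a,\id\}$ can never be a majority edge (else $g((a,a,\id),(\id,a,a),(\id,\id,\id))=(\id,a,\id)\in R$, impossible) and that $(a,\id)$ can never be $f$-sl (else $f((a,a,\id),(\id,a,a))=(\id,a,\id)\in R$); similarly for $b$. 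Hence each of $\{a,\id\},\{b,\id\}$ is either affine or has $(\id,\cdot)$ as its only semilattice orientation, and the four remaining cases are each settled by a single application of $f$ or $g$ to triples among $(a,a,\id),(\id,b,b),(\id,\id,\id)$, the third coordinate being forced into $\{a,b\}$ because $(a,b,\id)$ is forbidden.

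Your plan, by contrast, must control the three edges $\{a,c\},\{b,c\},\{a,b\}$ simultaneously for an arbitrary $c$, and you correctly isolate the hard configuration where both $(a,c)$ and $(b,c)$ are $f$-sl: there $f$ only reproduces $c$, and $g,h$ give no traction without further input. You then propose to invoke ``the material developed in the subsequent lemmas'', but Lemmas~\ref{lem:taylor}, \ref{lem:aabforb}, \ref{lem:no-cycle} all \emph{use} the second bullet of the present lemma, so that appeal would be circular. Without them I do not see how your approach finishes. The paper's trick of using $\id$ instead of $c$ is what makes the whole thing a four-case exercise rather than a combinatorial swamp.
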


\begin{proof} The first item follows from the assumption that $\bA$ has all 1-cycles. 

For the second item observe that $\{a, \id\}$ cannot be a majority edge. Otherwise, $$g((a,a,\id),(\id,a,a),(\id,\id,\id))=(\id,a,\id)\in R$$ is a contradiction to the properties of $\id$. Furthermore, $(a,\id)$ cannot be $f$-sl, since $$f((a,a,\id),(\id,a,a))=(\id,a,\id)\in R.$$ This is again a contradiction. 
Since these observations also hold for $b$ instead of $a$ we have the following case distinction.

\begin{enumerate}
    \item $(\id, a)$ is $f$-sl and $(\id, b)$ is $f$-sl. It follows that $f((a,a,\id),(\id,b,b))\in \{(a,a,b), (a,b,b)\}$.  Since $f$ preserves $R$, $(a,a,\id)\in R$, and $(\id,b,b) \in R$ we get that $f((a,a,\id),(\id,b,b))\in R$. This implies that $(a,a,b) \in R$ or $(a,b,b)\in R$.
    \item $(\id, a)$ is $f$-sl  and $\{b, \id\}$ is affine.
    By the definition of $f$ we get $f((b,b,\id),(\id,a,a))\in \{(b,a,a), (b,b,a)\}$. By the same argument as in Case 1 we get that $(a,a,b) \in R$ or $(a,b,b)\in R$.
     \item $(\id, b)$ is $f$-sl  and $\{a, \id\}$ is affine. This case is analogous to Case 2.
      \item $\{a, \id\}$ is affine  and $\{b, \id\}$ is affine. Observe that  $$g((a,\id,a),(\id,b,b),(\id,\id,\id))\in \{(a,b,a), (a,b,b)\},$$ since $g(a,b,\id) \in \{a,b,\id\}$ and the triple $(a,b,\id)$ is forbidden. As in the cases before it follows that $(a,a,b) \in R$ or $(a,b,b)\in R$.
\end{enumerate}
This concludes the proof of the second item. 
\end{proof}


%
%

\begin{figure}[t]
	\centering
	
\begin{tabular}{cc}
\begin{tikzpicture}[scale=1]
	
	\node (a) at (-0.7,0) [circle,fill,inner sep=1pt, outer sep=2pt] {};
	\node (b) at (-0.7,1)[circle,fill,inner sep=1pt , outer sep=2pt]{};
	\node (c) at (-0.7,2) [circle,fill,inner sep=1pt, outer sep=2pt] {};
	
	\node () at (-1.1,0)[]{$a$};
	
	\node () at (-1.1,1)[]{$b$};
	\node () at (-1.1,2)[]{$c$};

	
	\node (al) at (-0.85,-0.1){} ;
	\node (a'l) at (0.38,-0.14){} ;
	\node (bl) at (-0.85,1){};
	\node (bl2) at (-0.86,0.9){};
	\node (cl) at (-0.85,2.1) {};
	
	\node (ar) at (-0.55,-0.1) {};
	\node (a'r) at (0.65,0.1) {};
	\node (br) at (-0.55,1){};
	\node (br2) at (-0.5,1){};
	\node (cr) at (-0.55,2.1){} ;

	
	\draw	[draw=red, 
	fill=red,  fill opacity=0.2] plot [smooth cycle] coordinates { (al) (ar)(br)(cr)(cl)(bl) };
	
		\node (al) at (1,1){{\LARGE $\Rightarrow$}} ;


\end{tikzpicture}

	&  
	\begin{tikzpicture}[scale=1]
			\node (al) at (-1.5,1){} ;
		\node (a) at (-0.7,0) [circle,fill,inner sep=1pt, outer sep=2pt] {};
		\node (b) at (-0.7,1)[circle,fill,inner sep=1pt , outer sep=2pt]{};
		\node (c) at (-0.7,2) [circle,fill,inner sep=1pt, outer sep=2pt] {};
		
		\node () at (-1.1,0)[]{$a$};
		
		\node () at (-1.1,1)[]{$b$};
		\node () at (-1.1,2)[]{$c$};

		
		\node (al) at (-0.85,-0.1){} ;
		\node (a'l) at (0.38,-0.14){} ;
		\node (bl) at (-0.85,1){};
		\node (bl2) at (-0.86,0.9){};
		\node (cl) at (-0.85,2.1) {};
		
		\node (ar) at (-0.55,-0.1) {};
		\node (a'r) at (0.65,0.1) {};
		\node (br) at (-0.55,1){};
		\node (br2) at (-0.5,1){};
		\node (cr) at (-0.55,2.1){} ;

		
		\draw	[draw=red, 
		fill=red,  fill opacity=0.2] plot [smooth cycle] coordinates { (al) (ar)(br)(cr)(cl)(bl) };
		

		\draw[->, line width = 1pt](a) to (b);

	\end{tikzpicture}

	\\
\end{tabular}
	
	\caption[Illustration for the proof od Lemma~\ref{lem:taylor}.]{The statement of Lemma~\ref{lem:taylor}.
		The red shape means $(a,b,c)\notin R$, the black arrow means $(a,a,b)\notin R$.
	}\label{fig:taylor}
\end{figure}

\begin{lemma}\label{lem:taylor}
	Let $a,b,c \in A_0$ {be such that $(a,b,c)\notin R$ and $|\{a,b,c\}|=3$.}
	Then there are $x,y \in \{a,b,c\}$ such that $(x,x,y) \notin R$. 
\end{lemma}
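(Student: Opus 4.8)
The plan is to prove the contrapositive: assuming that every ``degenerate'' triple over $\{a,b,c\}$ lies in $R$ --- that is, $(x,x,y)\in R$ for all $x,y\in\{a,b,c\}$, together with $(x,x,x)\in R$, which holds by the all-$1$-cycles hypothesis (cf.\ Lemma~\ref{lem: aaa in R}) --- I would show that the full triple $(a,b,c)$ must also lie in $R$, contradicting $(a,b,c)\notin R$. Since $R$ is totally symmetric, I may freely permute the roles of $a,b,c$ throughout, and ``degenerate triple in $R$'' then means that every arrangement of a multiset with a repeated atom is allowed. The whole point is to \emph{reconstruct} the allowed triple $(a,b,c)$ from allowed degenerate triples by applying a single conservative polymorphism coordinatewise; since $R$ is preserved, this forces $(a,b,c)\in R$.

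The main tool is the uniform triple of operations $f,g,h\in\Pol(\fAo)$ from Proposition~\ref{prop:bulatov color functions}, whose behavior on a two-element set $C$ is dictated solely by the edge type of $C$ (semilattice, majority, or affine; every pair is of one of these types by Theorem~\ref{2elemtsubalgebrassiggers}). The natural approach is a case analysis on the edge types of the three pairs $\{a,b\}$, $\{a,c\}$, $\{b,c\}$. In the favourable cases I would use $g$ as a ``majority reconstruction'': for instance, applying $g$ to the three allowed degenerate triples $(a,b,b)$, $(a,c,c)$, $(b,b,c)$ produces, coordinatewise, the column patterns $(a,a,b)$, $(b,c,b)$, $(b,c,c)$, and whenever the relevant pairs are majority edges these columns evaluate to $a$, $b$, $c$ respectively, giving $(a,b,c)\in R$. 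Semilattice pairs are treated similarly, but one must track the \emph{orientation} of the semilattice edge (which element absorbs) and select the degenerate input triples accordingly, using the idempotence of $f$ and the explicit description $g|_{C}(x,y,z)=f(f(x,y),z)$ valid on semilattice pairs.

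The hard part will be the \emph{affine} edges. On an affine pair both $g$ and $h$ degenerate: $g|_{C}$ is a projection and only $h|_{C}$ carries information (a minority operation). A pure minority reconstruction of a three-distinct tuple is impossible --- one checks that forcing three distinct coordinate outputs out of degenerate inputs through a conservative minority drives one of the input rows to equal $(a,b,c)$ itself, which is exactly the triple we are not allowed to assume lies in $R$. Consequently the configurations containing an affine pair, and especially the configuration in which all three pairs are affine, form the crux: here I expect to need more than the bare clone structure, bringing in the relation-algebra structure --- the special behavior of the identity atom $\id$ and the composition constraints on allowed triples already exploited in Lemma~\ref{lem: aaa in R} --- either to complete the reconstruction through a mixed application of $f$, $g$, and $h$, or to rule the configuration out entirely. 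Showing that an affine pair among $a,b,c$ cannot coexist with all degenerate triples being allowed while $(a,b,c)$ is forbidden is where the real work of the lemma lies.
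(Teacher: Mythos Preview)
Your contrapositive plan is exactly the paper's approach: assume all degenerate triples $(x,x,y)$ over $\{a,b,c\}$ lie in $R$ and derive $(a,b,c)\in R$ by applying one of the uniform operations $f,g,h$ to degenerate input tuples. The case split by edge types is also what the paper does. The real gap is your assessment of the affine case.

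Your claim that ``a pure minority reconstruction of a three-distinct tuple is impossible'' is wrong, and the extra relation-algebra input you anticipate is not needed. You are overlooking that the \emph{constant} tuple $(a,a,a)$ is available as an input row (it is in $R$ by the all-$1$-cycles hypothesis). With a constant row you only need minority behaviour on \emph{two} of the three columns, and those two columns can be chosen to involve only the pairs $\{a,b\}$ and $\{a,c\}$. Concretely, if (after renaming) $\{a,b\}$ and $\{a,c\}$ are both affine, apply $h$ to the rows $(c,a,a)$, $(a,a,a)$, $(a,b,a)$: the columns are $(c,a,a)$, $(a,a,b)$, $(a,a,a)$, and minority on the first two together with idempotence on the third gives $(c,b,a)\in R$. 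The edge type of $\{b,c\}$ never enters, so this already covers the all-affine configuration you flagged as the crux. The mixed affine/majority case is handled the same way: if $\{a,b\}$ is affine and $\{a,c\}$ is majority, apply $h$ to $(a,a,c)$, $(a,b,a)$, $(b,b,c)$; since $h$ restricts to the first projection on a majority pair and to minority on an affine pair, the output is $(b,a,c)\in R$.

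Two smaller points. For the semilattice case the paper uses the binary $f$ alone rather than going through $g$: once $(a,b)$ is $f$-sl, split on whether $f(c,a)=c$ or $f(c,a)=a$ (this is exhaustive by conservativity, and independent of the edge type of $\{a,c\}$) and apply $f$ to a suitable pair of degenerate tuples. And the global case structure need only have three branches --- some pair is semilattice; no semilattice but some pair is affine; all pairs majority --- rather than a full enumeration of edge-type profiles.
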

\begin{proof}
	We first suppose that there is a semilattice edge on $\{a,b,c\}$. 
	Without loss of generality we assume that $(a,b)$ is  $f$-sl.
	If $f(c,a)=c$ 
	then $(a,a,c) \notin R$ 
	or $(b,a,a) \notin R$ because otherwise
	$$f((a,a,c),(b,a,a))=(b,a,c) \in R$$ contradicting our assumption. 
	If  $f(c,a)=a$ 
	then $(b,c,c) \notin R$ or $(a,a,c) \notin R$ because otherwise 
	$$f((b,c,c),(a,a,c))=(b,a,c) \in R$$ which is again a contradiction. 
	Hence, in all the cases we found $x,y \in \{a,b,c\}$ such that $(x,x,y) \notin R$ and are done. 
	In the following we therefore assume that there is no semilattice edge on $\{a,b,c\}$. 
	
	Next we suppose that there is an affine edge on $\{a,b,c\}$. 
	Without loss of generality we  assume that $\{a,b\}$ is an affine edge. Since there are no semilattice edges on $\{a,b,c\}$ we distinguish the following two cases:
	\begin{enumerate}
		\item $\{a,c\}$ is an affine edge. In this case 
		$(c,a,a) \notin R$ or $(a,b,a) \notin R$ because otherwise
		$$h((c,a,a), (a,a,a),(a,b,a))= (c,b,a) \in R.$$
		\item $\{a,c\}$ is a majority edge. In this case 
		$(a,a,c) \notin R$ or $(a,b,a) \notin R$ or $(b,b,c) \notin R$, because otherwise 
		$$h((a,a,c), (a,b,a),(b,b,c))= (b,a,c) \in R.$$
	\end{enumerate}
	In both cases we again found $x,y \in \{a,b,c\}$ such that $(x,x,y) \notin R$ and are done. We therefore suppose in the following that there are no affine edges on $\{a,b,c\}$. 
	Hence, all edges on $\{a,b,c\}$ are majority edges. 
	Then $(a,a,c) \notin R$ or $(a,b,a) \notin R$ or $(b,b,c) \notin R$ because otherwise
	$$g((a,a,c), (a,b,a),(b,b,c)) =(a,b,c) \in R.$$ 
	Thus, also in this case we found
	$x,y \in \{a,b,c\}$ such that $(x,x,y) \notin R$. 
\end{proof}
%
%

The next lemma states that the edge type on $\{a,b\}$ is predetermined whenever a triple $(a,a,b)$ is not in $R$.

\begin{lemma}\label{lem:aabforb}
	Let $a, b\in A_0$ be such that $(a,a,b) \notin R$. Then $(a,b)$ is a semilattice edge {in $\fAo$}
	but $(b,a)$ is not. 
\end{lemma}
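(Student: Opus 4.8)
The plan is to first pin down exactly which triples over $\{a,b\}$ lie in $R$, then to use the fixed witness operations to exclude the majority and affine cases via Theorem~\ref{2elemtsubalgebrassiggers}, and finally to fix the orientation of the resulting semilattice edge.

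First I would observe that $a\neq b$: otherwise $(a,a,b)=(a,a,a)\in R$ by the first item of Lemma~\ref{lem: aaa in R}, contradicting the hypothesis $(a,a,b)\notin R$. The second item of Lemma~\ref{lem: aaa in R} then forces $(a,b,b)\in R$. Since $R$ is totally symmetric, this completely determines the situation over $\{a,b\}$: the triples $(a,a,a)$ and $(b,b,b)$ together with all three permutations of $(a,b,b)$ belong to $R$, whereas the three permutations of $(a,a,b)$ are all forbidden. This short list is the only raw material available for the rest of the argument.

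Next I would rule out that $\{a,b\}$ is a majority or an affine edge, using the fixed operations $g$ and $h$ from Proposition~\ref{prop:bulatov color functions}. If $\{a,b\}$ were a majority edge, then $g|_{\{a,b\}}$ is a majority operation and a direct check gives $g((a,a,a),(a,b,b),(b,a,b))=(a,a,b)$; as all three argument triples lie in $R$ and $g$ preserves $R$, this would place the forbidden $(a,a,b)$ in $R$. Likewise, if $\{a,b\}$ were an affine edge, then $h|_{\{a,b\}}$ is a minority operation and $h((a,b,b),(b,a,b),(b,b,b))=(a,a,b)$, the same contradiction. By Theorem~\ref{2elemtsubalgebrassiggers}, the only remaining possibility is that $\{a,b\}$ carries a semilattice edge.

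To fix the orientation, I would assume toward a contradiction that $(b,a)$ is a semilattice edge. By Definition~\ref{defi:edgescolor} there is then a binary $q\in\Pol(\fAo)$ with $a$ absorbing, that is, $q(a,b)=q(b,a)=q(a,a)=a$ and $q(b,b)=b$. Applying $q$ componentwise to $(a,b,b),(b,b,a)\in R$ yields $(a,b,a)$, a permutation of the forbidden $(a,a,b)$ and hence not in $R$, contradicting that $q$ preserves $R$. Thus $(b,a)$ is not a semilattice edge, and since $\{a,b\}$ does carry one, $(a,b)$ must be a semilattice edge, as claimed. The main obstacle is the combinatorial search in the middle step: since only $(a,a,a)$, $(b,b,b)$, and the permutations of $(a,b,b)$ are available as inputs, one must locate triples among them whose image under a majority (respectively minority) operation is exactly $(a,a,b)$; finding these witnessing combinations, given how few allowed triples there are, is the heart of the proof, while the orientation step is then a short variant of the same trick.
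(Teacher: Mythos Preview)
Your proof is correct and follows essentially the same approach as the paper: establish the relevant triples via Lemma~\ref{lem: aaa in R}, rule out the majority and affine cases by applying $g$ and $h$ to suitable triples from $R$, and then exclude the $(b,a)$ orientation using a binary semilattice witness. The only differences are cosmetic choices of which permutations of the available triples are fed into $g$, $h$, and the binary operation (e.g., the paper obtains $(a,b,a)$ from $g$ rather than $(a,a,b)$, and uses the fixed $f$ in place of your generic $q$).
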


\begin{proof}
	By Lemma~\ref{lem: aaa in R} we know that $(a,b,b) \in R$, $(a,a,a) \in R$, and $(b,b,b) \in R$.
	Assume for contradiction that $\{a,b\}$ is a majority edge. 
	Then 
	$$
	g((a,a,a),(a,b,b), (b,b,a) )= (a,b,a) $$ which contradicts the fact that $g$ preserves $R$.
	Assume next that $\{a,b\}$ is an affine edge. 
	Then $$
	h((a,b,b),(b,a,b), (b,b,b) )= (a,a,b) $$ which again contradicts the fact that $h$ preserves $R$.
	Finally, 
	if $(b,a)$ is a semilattice edge then 
	$$f((a,b,b),(b,a,b))=(a,a,b)$$
	which contradicts the assumption that $f$ preserves $R$.  
	If follows that $(a,b)$ is the only semilattice edge on $\{a,b\}$ and therefore $f(a,b)=b=f(b,a)$ holds.
\end{proof}

\begin{figure}[t]
	\centering
	
	\begin{tabular}{cc}

	\begin{tikzpicture}[scale=1]

		\node (a) at (-0.7,0) [circle,fill,inner sep=1pt, outer sep=2pt] {};
		\node (b) at (-0.7,1)[circle,fill,inner sep=1pt , outer sep=2pt]{};
		\node (c) at (-0.7,2) [circle,fill,inner sep=1pt, outer sep=2pt] {};
		\node (a') at (0.5,0) [circle,fill,inner sep=1pt, outer sep=2pt] {};

		\node () at (-1.1,0)[]{$a$};
		\node () at (0.9,0.05)[]{$a'$};
		
		\node () at (-1.1,1)[]{$b$};
		\node () at (-1.1,2)[]{$c$};

		
			\node (al) at (-0.85,-0.1){} ;
			\node (a'l) at (0.38,-0.14){} ;
		\node (bl) at (-0.85,1){};
			\node (bl2) at (-0.86,0.9){};
		\node (cl) at (-0.85,2.1) {};
		
		\node (ar) at (-0.55,-0.1) {};
		\node (a'r) at (0.65,0.1) {};
		\node (br) at (-0.55,1){};
		\node (br2) at (-0.5,1){};
		\node (cr) at (-0.55,2.1){} ;

		\draw	[draw=blue, 	fill=blue,  fill opacity=0.2] plot [smooth cycle] coordinates {  (a'l)(a'r)(br2)(cr) (cl)(bl2)};
		
		\draw	[draw=red, 
		fill=red,  fill opacity=0.2] plot [smooth cycle] coordinates { (al) (ar)(br)(cr)(cl)(bl) };
		
%


			\draw[->, line width = 1pt](a) to (b);

			\node () at (2,1){{\LARGE $\Rightarrow$}} ;
		
	\end{tikzpicture}

&

\begin{tikzpicture}[scale=1]
	
		\node () at (-2,1){} ;

	\node (a) at (-0.7,0) [circle,fill,inner sep=1pt, outer sep=2pt] {};
	\node (b) at (-0.7,1)[circle,fill,inner sep=1pt , outer sep=2pt]{};
	\node (c) at (-0.7,2) [circle,fill,inner sep=1pt, outer sep=2pt] {};
	\node (a') at (0.5,0) [circle,fill,inner sep=1pt, outer sep=2pt] {};

	\node () at (-1.1,0)[]{$a$};
	\node () at (0.9,0.05)[]{$a'$};
	
	\node () at (-1.1,1)[]{$b$};
	\node () at (-1.1,2)[]{$c$};

	
	\node (al) at (-0.85,-0.1){} ;
	\node (a'l) at (0.38,-0.14){} ;
	\node (bl) at (-0.85,1){};
	\node (bl2) at (-0.86,0.9){};
	\node (cl) at (-0.85,2.1) {};
	
	\node (ar) at (-0.55,-0.1) {};
	\node (a'r) at (0.65,0.1) {};
	\node (br) at (-0.55,1){};
	\node (br2) at (-0.5,1){};
	\node (cr) at (-0.55,2.1){} ;

	\draw	[draw=blue, 	fill=blue,  fill opacity=0.2] plot [smooth cycle] coordinates {  (a'l)(a'r)(br2)(cr) (cl)(bl2)};
	
	\draw	[draw=red, 
	fill=red,  fill opacity=0.2] plot [smooth cycle] coordinates { (al) (ar)(br)(cr)(cl)(bl) };
	
	\draw[->, red, line width = 1pt](a') to (a);
	\node (al) at (0,0){{\LARGE  \color{red}$\times$}} ;


	\draw[->, line width = 1pt](a) to (b);

\end{tikzpicture}

\end{tabular}
	
	\caption[Illustration for the proof of Lemma~\ref{lem:no-cycle}.]{The statement of Lemma~\ref{lem:no-cycle}.	The blue shape means $(a',b,c)\in R$, the crossed-out red arrow means  $(a',a)$ is not a semilattice edge.}\label{fig:no cycle}
\end{figure}

\begin{lemma}\label{lem:no-cycle}
	Let $a, a', b, c \in A_0$ be such that $(a,b,c) \notin R$, $(a,a,b) \notin R$, and $(a',b,c) \in R$. 
{	Then $(a',a)$ is not a semilattice edge.}
\end{lemma}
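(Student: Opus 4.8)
The plan is to argue by contradiction: suppose $(a',a)$ \emph{is} a semilattice edge. By the orientation convention for semilattice edges (the definition of $f$-sl together with the usage in Lemma~\ref{lem:aabforb}), this means that the fixed uniform operation $f$ restricted to $\{a',a\}$ is the semilattice operation in which $a$ is absorbing, so that $f(a',a)=a$. Before doing anything else I would assemble the three ingredients the argument needs. From the hypothesis $(a,a,b)\notin R$ and Lemma~\ref{lem:aabforb}, the pair $(a,b)$ is a semilattice edge with $b$ absorbing, whence $f(b,a)=b$. From the standing assumption that $\bA$ has all $1$-cycles (first item of Lemma~\ref{lem: aaa in R}) we have $(a,a,a)\in R$. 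And $(a',b,c)\in R$ is given outright.

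The key move is then a single application of $f$ componentwise to the two tuples $(a',b,c)$ and $(a,a,a)$, both of which lie in $R$. Since $f\in\Pol(\fAo)$ preserves $R$,
$$ f\big((a',b,c),(a,a,a)\big) = \big(f(a',a),\, f(b,a),\, f(c,a)\big) = \big(a,\, b,\, f(c,a)\big) \in R. $$
Now I would invoke conservativity of $\Pol(\fAo)$ (Remark~\ref{rem: Aoconserv}), which forces $f(c,a)\in\{c,a\}$, leaving exactly two cases. If $f(c,a)=c$, then $(a,b,c)\in R$, contradicting $(a,b,c)\notin R$. If $f(c,a)=a$, then $(a,b,a)\in R$, and since $\bA$ is symmetric the relation $R$ is totally symmetric, so $(a,a,b)\in R$, contradicting $(a,a,b)\notin R$. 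Either outcome yields a contradiction, so $(a',a)$ is not a semilattice edge.

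The only delicate points are bookkeeping rather than substance. I must make sure the semilattice edge $(a',a)$ is oriented so that $f(a',a)=a$ — this is precisely what ``$(a',a)$ is a semilattice edge'' encodes, matching the orientation extracted in Lemma~\ref{lem:aabforb} — and I should note that degenerate coincidences are harmless: $c=b$ never occurs under the hypotheses, because $(a,b,b)\in R$ by Lemma~\ref{lem: aaa in R} while $(a,b,c)\notin R$, and $a'=a$ makes the conclusion vacuous. The heart of the proof, and the step I expect to carry the whole argument, is the one application of $f$ to $(a',b,c)$ and $(a,a,a)$ together with conservativity: the semilattice edge $(a',a)$ pulls the first coordinate $a'$ down to $a$, the edge $(a,b)$ pins the second coordinate at $b$, and conservativity confines the third coordinate to $\{a,c\}$, both of whose values land in a triple already known to be forbidden.
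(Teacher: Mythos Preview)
Your argument has a genuine gap at the very first step. You assume that if $(a',a)$ is a semilattice edge then the fixed uniform operation $f$ satisfies $f(a',a)=a$. But ``$(a',a)$ is a semilattice edge'' only says that \emph{some} binary polymorphism $p$ has $p(a',a)=a=p(a,a')$; it does not force the particular $f$ from Proposition~\ref{prop:bulatov color functions} to be oriented with $a$ absorbing. All that proposition guarantees is that $f|_{\{a,a'\}}$ is \emph{a} semilattice operation when the set $\{a,a'\}$ has a semilattice edge. If $(a,a')$ happens also to be a semilattice edge, $f$ may well satisfy $f(a',a)=a'$, and then your application $f((a',b,c),(a,a,a))$ produces $(a',b,f(c,a))$, which yields no contradiction. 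Your appeal to Lemma~\ref{lem:aabforb} does not help here: that lemma pins down the orientation on $\{a,b\}$ precisely because it shows $(b,a)$ is \emph{not} a semilattice edge, but you have no analogous exclusion for $(a,a')$.

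The paper's proof confronts exactly this difficulty by working with an arbitrary witness $p$ for the semilattice edge $(a',a)$ and then carrying out a full case analysis on the values $p(b,a)$ and $p(c,a)$, together with auxiliary claims ruling out $(a,a,c)\in R$ and controlling $p(a,b)$, $p(a,c)$ in the bad cases. Your single application would indeed dispose of the lemma in one stroke \emph{if} $f(a',a)=a$ were known---so the instinct that one componentwise application should suffice is close to right---but the orientation of $f$ on $\{a,a'\}$ is not under your control, and the extra case work in the paper is there precisely to compensate for using a polymorphism whose behaviour on $\{a,b\}$ and $\{a,c\}$ is a priori unknown.
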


\begin{proof}
	Assume for contradiction $(a',a)$ is a semilattice edge, i.e., there exists $p\in \Pol(\mathfrak{A}_0) $ with $p(a,a')=a=p(a',a)$. Note that by Lemma~\ref{lem: aaa in R} it follows that $(a,a,a)\in R$ and $(a,b,b)\in R$. 

\medskip

	\noindent\underline{Claim 1:}  $p(b,a)=a$ implies $p(a,b)=b$. This follows immediately, since otherwise \\$p((a,b,b),(b,a,b))=(a,a,b)\in R$ is a contradiction.
	
	\medskip
	
		\noindent\underline{Claim 2:} $(a,a,c)\not \in R$. We assume the opposite and consider the only   two possible cases for $p(b,a)$.
	\begin{enumerate}
		\item $ p(b,a)=b$: We get a contradiction by $p((a',b,c), (a,a,c))=(a,b,c) \in R$.
		\item $ p(b,a)=a$: By Claim 1 we know that $p(a,b)=b$ follows. 
		Then  $p((a,a,c),(a',b,c))=(a,b,c) \in R$ contrary to our assumptions.
		
	\end{enumerate}
	This proves Claim 2. 
	
	\medskip

			\noindent\underline{Claim 3:}  $p(c,a)=a$ implies $p(a,c)=c$. Lemma~\ref{lem: aaa in R} together with Claim 2 implies that $(a,c,c)\in R$. Now Claim 3 follows immediately, since otherwise $p((a,c,c),(c,a,c))=(a,a,c)\in R$, which contradicts Claim 2.
	
	\medskip

	We finally make a case distinction for all possible values of $p$ on $(b,a)$ and $(c,a)$.
	
	\begin{enumerate}
		\item $ p(b,a)=b$ and $p(c,a)=c$: We get a contradiction  by $p((a',b,c), (a,a,a))=(a,b,c) \in R$.
		\item $ p(b,a)=b$ and $p(c,a)=a$: We get a contradiction by $p((a',b,c), (a,a,a))=(a,b,a) \in R$.
		
		\item $ p(b,a)=a$ and $p(c,a)=c$:  $p((a',b,c),(a,a,a))=(a,a,c) \in R$ contradicts Claim 2.
		\item $ p(b,a)=a$ and $p(c,a)=a$: By Claim 1 we get $p(a,b)=b$ and by Claim 3 we get $p(a,c)=c$. This yields a contradiction by $p((a,a,a),(a',b,c))=(a,b,c) \in R$.
	\end{enumerate}
This proves the lemma.
\end{proof}


\subsection{The Binarisation}\label{sec:bina}
We have announced in the introduction that we want to apply Kazda's theorem (Theorem~\ref{thm:kazda})  for binary conservative structures, but the atom structure $\fAo$ from Section~\ref{sect:atomstruct} has a ternary relation. We therefore associate a certain binary structure $\fAob$ to $\fAo$ which shares many properties with $\fAo$.

\begin{definition}\label{defi:binarisation}
	We denote by 
	$\fAob$ the structure with domain $A_0$ and the following relations:
	\begin{itemize}
		\item a unary relation $U_S$ for each subset $S$ of $A_0$; 
		\item for every $a \in A_0$ the  binary relation $R_a := \{(x,y)\in A_0^2\mid (a,x,y)\in R\}$;
		\item 
  a binary relation for every union of relations of the form $R_a$.
	\end{itemize}
	
\end{definition}

The binarisation of $\fAo$ according to Definition~\ref{defi:binarisation} will be denoted by 
$\fAob$. 
We obtain the following results about the relationship of 	$\Pol(\fAo)$ and $\Pol(\fAob)$.

\begin{lemma}\label{lem: Polinclusion}
	$\Pol(\mathfrak{A}_0) \subseteq \Pol(\mathfrak{A}_0^b)$.
\end{lemma}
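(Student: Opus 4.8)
The plan is to show that every polymorphism of $\fAo$ also preserves all the relations of $\fAob$. Since $\Pol$ is determined by which relations an operation preserves, it suffices to verify that an arbitrary $f \in \Pol(\fAo)$ preserves each type of relation listed in Definition~\ref{defi:binarisation}: the unary relations $U_S$, the binary relations $R_a$, and the unions of relations of the form $R_a$.

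First I would dispose of the unary relations. By Remark~\ref{rem: Aoconserv} every $f \in \Pol(\fAo)$ is conservative, hence for any $S \subseteq A_0$ and any $x_1, \ldots, x_n \in S$ we have $f(x_1, \ldots, x_n) \in \{x_1, \ldots, x_n\} \subseteq S$, so $f$ preserves $U_S$. (Alternatively, each $U_S$ is already a relation of $\fAo$, so this is immediate.) Next I would treat the binary relations $R_a$, which is the key step. Fix an $n$-ary $f \in \Pol(\fAo)$, an atom $a \in A_0$, and tuples $(x_1, y_1), \ldots, (x_n, y_n) \in R_a$; by definition this means $(a, x_i, y_i) \in R$ for each $i$. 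The idea is to apply $f$ to the three ternary tuples $(a, x_1, y_1), \ldots, (a, x_n, y_n) \in R$, which lie in $R$ by assumption. Since $f$ preserves $R$, we obtain $f((a,x_1,y_1), \ldots, (a,x_n,y_n)) \in R$, i.e. $\bigl(f(a,\ldots,a),\, f(x_1,\ldots,x_n),\, f(y_1,\ldots,y_n)\bigr) \in R$. Because $f$ is idempotent (being conservative, $f(a, \ldots, a) = a$), the first coordinate is exactly $a$, so $\bigl(a,\, f(x_1,\ldots,x_n),\, f(y_1,\ldots,y_n)\bigr) \in R$, which says precisely that $\bigl(f(x_1,\ldots,x_n),\, f(y_1,\ldots,y_n)\bigr) \in R_a$. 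Hence $f$ preserves $R_a$.

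For the unions I would argue essentially the same way, but one must be careful: a union $R_{a_1} \cup \cdots \cup R_{a_m}$ is not in general preserved coordinatewise by the above computation, because $f$ applied to tuples drawn from \emph{different} $R_{a_i}$'s need not keep the first coordinate fixed to a single atom. The clean route is to observe that each such union is already first-order (indeed primitive positive, via $\exists$ over the first coordinate) definable from $R$ using conservativity: the union $\bigcup_i R_{a_i}$ equals $\{(x,y) \mid \exists z\, (z \in \{a_1,\ldots,a_m\} \wedge (z,x,y) \in R)\}$, and since $\fAo$ has every subset of $A_0$ as a unary relation, the set $\{a_1, \ldots, a_m\}$ is itself a relation of $\fAo$. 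Thus each union is pp-definable over $\fAo$, and polymorphisms preserve all pp-definable relations (Proposition~\ref{theo:BKKR}), so every $f \in \Pol(\fAo)$ preserves it. Combining the three cases gives $\Pol(\fAo) \subseteq \Pol(\fAob)$.

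I expect the main obstacle to be the union relations rather than the individual $R_a$, since the naive coordinatewise argument breaks down there; the resolution is to package them as pp-definitions over $\fAo$ and invoke Proposition~\ref{theo:BKKR}, leveraging the fact that all unary subsets of $A_0$ are available as relations of $\fAo$. The binary case itself hinges only on $f$ preserving the ternary $R$ together with idempotence, both of which are in hand.
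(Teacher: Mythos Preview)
Your proof is correct and arrives at essentially the same argument as the paper. The paper proceeds uniformly by writing down the pp-definitions $\exists z\,(U_{\{a\}}(z)\wedge R(z,x,y))$ for $R_a$ and $\exists z\,(U_{S}(z)\wedge R(z,x,y))$ for $\bigcup_{a\in S}R_a$, then invoking Proposition~\ref{theo:BKKR}; your direct computation for the individual $R_a$ is just this pp-definition unpacked (idempotence plays the role of the singleton unary predicate), and for the unions you recover exactly the paper's pp-definition.
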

\begin{proof}
	Clearly, every relation $R_a$ has the primitive positive definition
	$\exists z (U_{\{a\}}(z) \wedge R(z,x,y))$ in $\fAo$. 
	A primitive positive definition of $\cup_{a \in S} R_a$ is
	$\exists z (U_{S}(z) \wedge R(z,x,y))$ in $\fAo$.  Then the statement of the lemma follows by Theorem~\ref{theo:BKKR}.
\end{proof}

\begin{lemma}\label{lem:p2}
	$\Pol^{(2)}(\mathfrak{A}_0^b) \subseteq \Pol^{(2)}(\mathfrak{A}_0)$.
\end{lemma}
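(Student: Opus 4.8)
The plan is to prove the reverse inclusion by showing that an arbitrary binary $p \in \Pol(\fAob)$ is a polymorphism of $\fAo$. Since $\fAob$ carries the unary relation $U_S$ for every $S \subseteq A_0$, any such $p$ preserves all unary relations, hence is conservative; in particular it preserves every unary relation of $\fAo$. As we have dropped $E$, the only remaining relation of $\fAo$ is the ternary $R$, so everything reduces to showing that $p$ preserves $R$. Throughout I write $\alpha := p(a_1,a_2)$, $\beta := p(b_1,b_2)$, $\gamma := p(c_1,c_2)$ for two input triples $t_1 = (a_1,b_1,c_1), t_2 = (a_2,b_2,c_2) \in R$; by conservativity $\alpha \in \{a_1,a_2\}$, $\beta \in \{b_1,b_2\}$, $\gamma \in \{c_1,c_2\}$, and the goal is $(\alpha,\beta,\gamma) \in R$.

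I would isolate two tools. First, a \emph{sharing principle}: if $u,v \in R$ agree in one coordinate, with common value $a$ there, then $p(u,v) \in R$. Indeed, since $\bA$ is symmetric the relation $R$ is totally symmetric, so $R_a$ is a symmetric binary relation and the remaining two coordinates of $u$ and of $v$ each form a pair in $R_a$; as $p$ preserves $R_a$ (a relation of $\fAob$) and fixes $a$ via $p(a,a)=a$, we get $p(u,v) \in R$. Second, a \emph{cross principle}: from $(b_1,c_1) \in R_{a_1}$ and $(b_2,c_2) \in R_{a_2}$, both lying in the union relation $R_{a_1} \cup R_{a_2}$ (which belongs to $\fAob$ by Definition~\ref{defi:binarisation}), preservation gives $(\beta,\gamma) = p((b_1,c_1),(b_2,c_2)) \in R_{a_1} \cup R_{a_2}$, so $(a_1,\beta,\gamma) \in R$ or $(a_2,\beta,\gamma) \in R$; the analogous statements hold for the other two coordinates, using $R_{b_1}\cup R_{b_2}$ and $R_{c_1}\cup R_{c_2}$.

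Finally I would combine the two. If the cross principle already yields $(\alpha,\beta,\gamma)$ we are done; otherwise it produces a triple $s \in R$ that agrees with the target in two coordinates but carries the wrong value in the third. I would then feed $s$ together with whichever of $t_1, t_2$ shares a coordinate with it into $p$, \emph{in the order in which the free coordinates are indexed} (the triple supplying index-$1$ values first), so that the shared coordinate contributes $p(a,a)=a$ and the two free coordinates combine exactly to the required $\beta$ and $\gamma$; the sharing principle then certifies membership in $R$. For example, when the target is $(a_1,b_2,c_2)$, the cross on the middle coordinate gives $(a_1,b_1,c_2) \in R$ or the target; in the first case $(a_1,b_1,c_2)$ and $t_2$ share the third coordinate $c_2$, and $p((a_1,b_1,c_2),t_2) = (\alpha,\beta,\gamma)$ lands in $R$ by sharing.

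I expect the combination step to be the main obstacle: because $p$ need not be commutative, one cannot simply symmetrise, and one must run a short case analysis over the possible ``mixed'' patterns of $(\alpha,\beta,\gamma)$, each time selecting both the coordinate on which to apply the cross principle and the argument order in the final application of $p$ so that the free coordinates reassemble into $\beta$ and $\gamma$. The degenerate situations in which $t_1$ and $t_2$ already agree in some coordinate are immediate from the sharing principle alone.
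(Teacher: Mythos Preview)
Your approach is correct and essentially identical to the paper's: both use preservation of the union relation $R_{a_1}\cup R_{a_2}$ (your cross principle) followed by preservation of a single $R_d$ (your sharing principle) to reach the target triple. The paper streamlines your case analysis by invoking the total symmetry of $R$ together with swapping the two arguments of $p$ to reduce to the single pattern $(a_1,b_1,c_2)$; you may want to adopt that reduction, since your phrase ``whichever of $t_1,t_2$ shares a coordinate with $s$'' is ambiguous (both do), and the combination step only succeeds when you cross on a coordinate carrying the \emph{majority} index of the target and then pair $s$ with the corresponding $t_m$.
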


\begin{proof}
	Let $f \in \Pol^{(2)}(\fAob)$. It suffices to prove that $f$ preserves the relation  $R^{\fA_0}$. Arbitrarily choose $(a_1,b_1,c_1),(a_2,b_2,c_2) \in R$. We want to show that 
	$t := (f(a_1,a_2),f(b_1,b_2),f(c_1,c_2))$ is in $R$ as well.  
	If $t \in \{(a_1,b_1,c_1),(a_2,b_2,c_2)\}$ then there is nothing to be shown. Otherwise, since $f$ must preserve $\{a_1,a_2\}$, $\{b_1,b_2\}$, and $\{c_1,c_2\}$, 
	by the symmetry of $R$ and possibly flipping the arguments of $f$ we may assume without loss of generality that $f(a_1,a_2) = a_1$, $f(b_1,b_2)=b_1$, and $f(c_1,c_2) = c_2$. 
	So we have to show that $t = (a_1,b_1,c_2) \in R$. 
	Note
	that $(b_1,c_1) \in R_{a_1}$ and $(b_2,c_2) \in R_{a_2}$, 
	and therefore $(f(b_1,b_2),f(c_1,c_2)) \in R_{a_1} \cup R_{a_2}$. If $(f(b_1,b_2),f(c_1,c_2)) \in R_{a_1}$, then 
	we obtain that 
	$(b_1,c_2) \in R_{a_1}$, and hence $(a_1,b_1,c_2) \in R$ and we are done.  Otherwise, $(f(b_1,b_2),f(c_1,c_2)) \in R_{a_2}$, and we obtain that $(b_1,c_2) \in R_{a_2}$, and hence $(a_2,b_1,c_2) \in R$. 
	In particular, $(a_2,c_2) \in R_{b_1}$. 
	Since  $(a_1,c_1) \in R_{b_1}$ 
	and since $f$ preserves $R_{b_1}$ we have that 
	$(f(a_1,a_2),f(c_1,c_2)) = (a_1,c_2) \in R_{b_1}$,
	and hence $(a_1,b_1,c_2) \in R$, which concludes the proof. 
\end{proof}
Observe that this implies that $\fAob$ and $\fA$ have exactly the same semilatice edges. The following example shows
that in general it does not hold
that $\Pol(\fAob)\subseteq \Pol(\fAo)$. 

\begin{example}\label{examp:Komega}
	Let $\mathbf{K}$ be the relation algebra with two atoms $\{\id,E\}$ and the multiplication table given in Figure~\ref{Table:komega} (called $1_2$ in the terminology of Maddux~\cite{Maddux2006-dp}). It is easy to see that the expansion of the infinite clique $K_\omega$  
	by the empty relation for $0$, the equality relation for $\id$, and the full relation for $1$ is a normal representation of $\mathbf{K}$.
	It is not hard to show that $\mathfrak{K}_0$ does not have a majority polymorphism, but $\mathfrak{K}^b_0$  does since
	every binary relation on a two-element set is preserved by the (unique) majority operation on a two-element set.

	\begin{figure}[t]
		\begin{center}
			
			\begin{tabular}{|c||c|c|}
				\hline 
				$~\circ~$	& $~\id~$ & $E$ \\ 
				\hline \hline
				$\id$	&  $\id$ & $E$  \\ 
				\hline 
				$E$	&$E$  & $ 1$  \\ 
				\hline 
				
			\end{tabular}

		\end{center}

		\caption{Multiplication table of  the relation algebra $\mathbf{K}$.}
		\label{Table:komega}
	\end{figure}

\end{example}

\subsection{No Affine Edges in the Atom Structure}\label{sec: no affine}

We show in this section that under the assumption that $\fAob$ has a Siggers polymorphism and has no affine edge, $\fAo$ also has no affine edge. So let us assume for the whole section that $\Pol(\fAob)$ contains a Siggers operation and that $\fAob$ has no affine edge. 

 Since $\fAob$ is conservative and has no affine edge, there exists according to Proposition~\ref{prop:bulatov color functions}  a binary operation $v \in \Pol(\fAo)$ and a ternary operation $w\in \Pol(\fAo)$ such that for every two element subset $C$ of $A_0$,
 
 \begin{itemize}
 	\item $v|_{C}$ is a semilattice operation whenever $C$ has a semilattice edge, and $v|_{C}(x,y)=x$ otherwise;
 	\item $w|_{C}$ is a majority operation if $C$ is  a majority edge and $w|_{C}(x,y,z)=v|_{C}( v|_{C}(x,y),z)$ if $C$ has a semilattice edge.
 \end{itemize}
We define 
\begin{align}
u(x,y,z):=w(v (v (x, y), z), v (v (y, z), x), v (v (z, x), y)). \label{eq:comp}
\end{align}

{
\begin{lemma}\label{lem:only sl of majority}The structures $\fAo$ and $\fAob $ have exactly the same semilattice edges.
	Let $a,b\in A_0$ be such that $\{a,b\}$ has no semilattice edge in 
 the two
 structures. Then the restriction of $u$ to $\{a,b\}$ is a majority operation.
	\end{lemma}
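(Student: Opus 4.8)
The plan is to prove the two assertions separately: the equality of semilattice edges is a bookkeeping consequence of the two comparison lemmas relating $\Pol(\fAo)$ and $\Pol(\fAob)$, and the majority claim rests on a short computation that exploits how $v$ degenerates on a non-semilattice edge.

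First I would establish that $\fAo$ and $\fAob$ have the same binary polymorphisms. Lemma~\ref{lem: Polinclusion} gives $\Pol(\fAo)\subseteq \Pol(\fAob)$, hence $\Pol^{(2)}(\fAo)\subseteq \Pol^{(2)}(\fAob)$, while Lemma~\ref{lem:p2} gives the reverse inclusion $\Pol^{(2)}(\fAob)\subseteq \Pol^{(2)}(\fAo)$; together these yield $\Pol^{(2)}(\fAo)=\Pol^{(2)}(\fAob)$. Since whether a pair $(a,b)$ is a semilattice edge is witnessed solely by a binary polymorphism (Definition~\ref{defi:edgescolor}), the two structures carry exactly the same semilattice edges. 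This settles the first sentence of the lemma.

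For the second assertion I would fix a two-element set $\{a,b\}$ carrying no semilattice edge and record two preliminary facts. By the defining property of the color function $v$, the absence of a semilattice edge on $\{a,b\}$ forces $v|_{\{a,b\}}(x,y)=x$, i.e.\ $v$ restricts to the first projection on $\{a,b\}$. Moreover, since $\Pol(\fAob)$ contains a Siggers operation, the reformulation of Theorem~\ref{2elemtsubalgebrassiggers} stated just after it guarantees that every two-element subset of $A_0$ is a majority edge, an affine edge, or carries a semilattice edge; as $\fAob$ has no affine edge and $\{a,b\}$ carries no semilattice edge, $\{a,b\}$ must be a majority edge of $\fAob$, and so $w|_{\{a,b\}}$ is a majority operation by the defining property of $w$. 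Substituting $v|_{\{a,b\}}(x,y)=x$ into $u(x,y,z)=w(v(v(x,y),z),v(v(y,z),x),v(v(z,x),y))$ then makes the nested applications collapse: for $x,y,z\in\{a,b\}$ one gets $v(v(x,y),z)=v(x,z)=x$, $v(v(y,z),x)=v(y,x)=y$, and $v(v(z,x),y)=v(z,y)=z$, so $u(x,y,z)=w(x,y,z)$ on $\{a,b\}$, whence $u|_{\{a,b\}}=w|_{\{a,b\}}$ is a majority operation. Conservativity of $v$ and $w$, inherited from the conservative structure $\fAob$, keeps all these values inside $\{a,b\}$, so the restriction is well defined.

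I expect the only genuinely delicate point to be tracking \emph{which} structure each edge type refers to. The equality of the clones is established only in arity two, so I may freely transport semilattice edges between $\fAo$ and $\fAob$; but the majority-edge status that governs the behavior of $w$ must be read off in $\fAob$, where the no-affine-edge hypothesis resides. Once this distinction is respected, the algebraic collapse of the nested $v$'s is entirely routine.
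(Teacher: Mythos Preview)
Your proposal is correct and follows essentially the same approach as the paper: the paper's proof of the first sentence is identical to yours (same binary polymorphisms via Lemmas~\ref{lem: Polinclusion} and~\ref{lem:p2}), and for the second assertion the paper only writes that it ``follows from the definition of $u$ by means of $w$ and $v$'', which is precisely the computation you spell out. Your extra care in locating the majority-edge status of $\{a,b\}$ in $\fAob$ (via the standing no-affine-edge assumption of Section~\ref{sec: no affine}) is exactly what the paper leaves implicit.
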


\begin{proof}By Lemma~\ref{lem: Polinclusion} and Lemma~\ref{lem:p2}, the structures $\fAo$ and $\fAob $ have exactly the same semilattice edges, since they have the same binary polymorphisms. The second statement follows  from the definition of $u$ by  means of $w$ and $v$.
	\end{proof}
}

\begin{definition}
	Let $f$ be a binary operation on $A_0$. Then we say that $\{a,b,c\}\subseteq A_0$ has the \emph{$f$-cycle $(x,y,z)$} if  $\{x,y,z\}=\{a,b,c\}$ and $(x,y)$, $(y,z)$, and $(z,x)$ are $f$-sl.
	
\end{definition}

\begin{lemma}\label{lem:cyclic} 
	Let $a,b,c\in A_0$ be  such that $(a,b)$ is  $v$-sl  
	but $(a,b,c)$ is not a $v$-cycle. Then $u(r,s,t)\not = a$ for any choice of $r,s,t \in A_0$ such that $\{r,s,t\}=\{a,b,c\}$.
\end{lemma}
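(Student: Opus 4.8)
The plan is to unfold the definition of $u$ and evaluate it on $\{a,b,c\}$ directly, using the explicit shapes of $v$ and $w$ together with the fact that both operations are conservative (since $\Pol(\fAo)$ is conservative by Remark~\ref{rem: Aoconserv}). First I would record how $v$ acts on the three pairs inside $\{a,b,c\}$: since $(a,b)$ is $v$-sl we have $v(a,b)=v(b,a)=b$, while on each of $\{a,c\}$ and $\{b,c\}$ the operation $v$ is either a first projection (on a majority edge) or returns the absorbing element (on a semilattice edge). Conservativity guarantees $v(v(x,y),z)\in\{x,y,z\}=\{a,b,c\}$ and that $w$ of three elements returns one of them. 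Finally I would translate the hypothesis that $\{a,b,c\}$ is not a $v$-cycle into the concrete statement that we cannot simultaneously have $v(b,c)=c$ and $(c,a)$ being $v$-sl.

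Next I would split the six orderings of $\{a,b,c\}$ into the two cyclic classes $\{(a,b,c),(b,c,a),(c,a,b)\}$ and $\{(a,c,b),(c,b,a),(b,a,c)\}$. Each class feeds $w$ the same three inner values, in cyclically permuted order, so it suffices to understand the two multisets $E=\{\,v(b,c),\ v(v(b,c),a),\ v(v(c,a),b)\,\}$ and $O=\{\,v(v(a,c),b),\ v(v(c,b),a),\ v(b,c)\,\}$. For the even class I would check termwise that $a\notin E$: one has $v(b,c)\in\{b,c\}$ and $v(v(c,a),b)\in\{b,c\}$ (because $v(c,a)=a$ forces $v(a,b)=b$, while $v(c,a)=c$ forces $v(c,b)\in\{b,c\}$), whereas $v(v(b,c),a)=a$ would require $v(b,c)=c$ and $(c,a)$ to be $v$-sl, i.e. exactly the excluded $v$-cycle. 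Since no inner value equals $a$, conservativity of $w$ yields $u(r,s,t)\neq a$ for all three even orderings.

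For the odd class the same termwise inspection shows $v(v(a,c),b)\in\{b,c\}$ and $v(b,c)\in\{b,c\}$, so the only inner value that can equal $a$ is $v(v(c,b),a)$; and this happens exactly when $v(c,b)=c$ and $(c,a)$ is $v$-sl. Under the no-cycle hypothesis this forces $\{b,c\}$ to be a majority edge (so $v(b,c)=b$) and $(c,a)$ to be $v$-sl (so $v(a,c)=a$). In every other subcase $a\notin O$ and conservativity of $w$ again finishes the argument, so the whole difficulty is concentrated in this single remaining subcase.

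The main obstacle is precisely this subcase, where $a$ genuinely occurs as an inner value and conservativity alone is insufficient. Here the multiset $O$ collapses to $\{b,b,a\}$, since $v(v(a,c),b)=v(a,b)=b$, $v(b,c)=b$, and $v(v(c,b),a)=v(c,a)=a$; thus each of the three odd orderings applies $w$ to a triple consisting of one $a$ and two $b$'s. I would close the argument by invoking the defining property of $w$ on the semilattice edge $\{a,b\}$ (Proposition~\ref{prop:bulatov color functions}): there $w|_{\{a,b\}}(x,y,z)=v(v(x,y),z)$, and since $b$ is the absorbing element for $v$ on $\{a,b\}$, any combination of one $a$ and two $b$'s evaluates to $b$. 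Hence $u(r,s,t)=b\neq a$ in this subcase as well, covering all six orderings. If $a,b,c$ fail to be pairwise distinct, then $\{a,b,c\}$ reduces to the semilattice edge $\{a,b\}$ and the identical computation with $w|_{\{a,b\}}$ again returns $b$, so the statement holds in all cases.
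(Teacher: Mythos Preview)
Your proof is correct and follows essentially the same line as the paper's: both arguments show that an inner term $v(v(x,y),z)$ can only evaluate to $a$ in the specific configuration where $z=a$, $(c,a)$ is $v$-sl, and $\{b,c\}$ is a majority edge, and that in this configuration the remaining two inner terms equal $b$, so the semilattice behaviour of $w$ on $\{a,b\}$ forces $u(r,s,t)=b$. The only organisational difference is that you split into the two cyclic classes of orderings and treat each explicitly, whereas the paper states four claims (Claim~1: $z=a$; Claim~2: $(c,a)$ is $v$-sl; Claim~3: $\{b,c\}$ is majority; Claim~4: the other two inner terms are $b$) that cover all orderings at once; the underlying computations are identical.
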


\begin{proof}
	We prove a series of intermediate claims.

	\medskip 
\noindent	\underline{Claim 1:} If $\{x,y,z\}=\{a,b,c\}$ and $v (v (x, y), z)=a$, then $z=a$. 
	
\medskip 
We assume for contradiction that $z \neq a$ and distinguish the following cases. 
	\begin{enumerate}
		\item $x=a, y=b, z=c$: Then $ v (v (x, y), z)= v (b, c) \in \{b,c\}$.
		\item $x=a, y=c, z=b$: Then $ v (v (x, y), z)\in \{ v(a,b), v(c,b)\} \subseteq \{b,c\}$.
		\item $x=b, y=a, z=c$: Then $ v (v (x, y), z)= v (b, c) \in \{b,c\}$.
		\item $x=c, y=a, z=b$: Then $ v (v (x, y), z)\in \{ v (c, b), v(a,b)\} \subseteq \{b,c\}$.
	\end{enumerate}
	In all four cases we have $v (v (x, y), z) \neq a$, which contradicts our assumption and proves the claim. 
	
	

	\medskip

\noindent	\underline{Claim 2:} If $\{x,y,z\}=\{a,b,c\}$ and $v (v (x, y), z)=a$, then $(c,a)$ is $v$-sl.
	
	By Claim 1 we get that $z=a$ and thus $\{x,y\} = \{b,c\}$. We then have $v(x,y)=c$ since otherwise
	$v(x,y) = b$ and $v (v (x, y), z) = v(b,a) =b$, which contradicts our assumption.
	Assume for contradiction that $(c,a)$ is not $v$-sl and therefore one of the following holds:
	
	\begin{enumerate}
		\item $(a,c)$ is $v$-sl.  It follows that $v (v (x, y), z)= v(c,a)=c$ which contradicts our assumption.
		\item $\{a,c\}$ is a majority edge of $\fAob$. It follows again that $v (v (x, y), z)= v(c,a)=c$,
		since $v$ behaves like the projection on the first coordinate on majority edges. This contradicts our assumption.
	\end{enumerate}
	
	\noindent\underline{Claim 3:} If $\{x,y,z\}=\{a,b,c\}$ and $v (v (x, y), z)=a$, then $\{b,c\}$ is a majority edge of $\fAob$.

	Assume for contradiction that
	there is a semilattice edge on $\{b,c\}=\{x,y\}$. 
	By Claim 2 and our assumption that $(a,b,c)$ is not a $v$-cycle,  the edge $(b,c)$ is not $v$-sl and therefore $(c,b)$ is  $v$-sl. Therefore, we get $v(v(x,y),z)=v(b,a)=b$ which contradicts our assumption.
	
	\medskip 
\noindent	\underline{Claim 4:} If $\{x,y,z\}=\{a,b,c\}$ and $v (v (x, y), z)=a$, then $v (v (z, x), y) = b=v (v (y, z), x)$ follows. 
	
	By Claim 3, $\{b,c\}$ is a majority edge of $\fAob$  and it follows that $b=y$ and $c=x$ since otherwise $v (v (x, y), z)=v(b,a)=b$.
	Now we calculate 
	$$ v (v (z, x), y))= v (v (a, c), b))=v(a,b)=b=v (b , c) =v (v (b, a) , c) =v (v (y, z) , x)$$ which proves the claim.

	\medskip
	
	Now we are able to prove the statement of the lemma. Assume for contradiction that $u(r,s,t)=a$. Since $w$ preserves $U_{A \setminus \{a\}}$ this is only possible if at least one of the terms $v (v (r, s), t)$, $v (v (s, t), r)$, or $ v (v (t, r), s))$ evaluates to $a$.
	By Claim 4 we get that the two other terms evaluate to $b$. Since $(a,b)$ is $v$-sl we get that $w(a,b,b)=w(b,a,b)=w(b,b,a)=b$ which contradicts our assumption $u(r,s,t)=a$.
\end{proof}

\begin{theorem}\label{theo: u from left to right}
	$\Pol(\mathfrak{A}_0^b) \subseteq \Pol(\mathfrak{A}_0)$.
\end{theorem}
\begin{proof}We have to prove that $u$ preserves $R$.
	Let $(a_1,b_1,c_1), (a_2,b_2,c_2),  (a_3,b_3,c_3)\in R$ and
	let $$(a,b,c) := (u(a_1,a_2,a_3), u(b_1,b_2,b_3), u(c_1,c_2,c_3)).$$
	Assume for contradiction  that $(a,b,c) \not\in R$.  
	By Lemma~\ref{lem:taylor}, we may 
	assume without loss of generality that $(a,a,b) \not \in R$ and hence 
	by Lemma~\ref{lem:aabforb} $(a,b)$ is a semilattice edge in $\fAo$ and $(b,a)$ is not.
 {By Lemma~\ref{lem:only sl of majority} the structures $\fAo$ and $\fAob $ have exactly the same semilattice edges. This implies that $\{a,b\}$ has a semilattice edge; this semilattice edge can only be $(a,b)$ and therefore $(a,b)$ is $v$-sl.} 
	Since $u$ preserves
	$R_{a_1} \cup R_{a_2} \cup R_{a_3}$ 
	there exists $r \in \{a_1,a_2,a_3\}$ such that 
	$(r,b,c) \in R$. 
	By Lemma \ref{lem:no-cycle} we get that $(r,a)$ is not a semilattice edge in  $\mathfrak{A}_0$ and therefore  Lemma~\ref{lem:p2} implies that  $(r,a)$ is not a semilattice edge in $\fAob$ and we get that $(r,a)$ is not  $v$-sl. 
	Let $s \in \{a_1,a_2,a_3\}\setminus \{a,r\}$. 
	\medskip
	
	\noindent\underline{Claim 1:} $\{a,r,s\}$ does not have a $v$-cycle.  
	
	Assume for contradiction that $\{a,r,s\}$ has a $v$-cycle.  Since $(r,a)$ is not $v$-sl it follows that $(a,r)$ is $v$-sl and therefore
	$(r,s)$ and $(s,a)$ are $v$-sl. 
	We consider the following two cases:
	\begin{enumerate}
		\item $(s,b,c)\in R$. Then Lemma~\ref{lem:no-cycle} applied to $a,s,b,c$ implies that $(s,a)$ is not a semilattice edge and therefore by Lemma~\ref{lem:p2} $(s,a)$ is not  $v$-sl, which is a contradiction. 
		
		\item  $(s,b,c)\not\in R$. Note that $(s,s,b) \not\in R$ holds,  since  $(s,a)$ is $v$-sl and $v((s,s,b), (a,a,a))=(a,a,b)\in R$ yields a contradiction to $(a,a,b) \not\in R$.
			Hence, Lemma \ref{lem:no-cycle} applied to $s,r,b,c$
			implies that $(r,s)$ is not a semilattice edge and therefore by Lemma~\ref{lem:p2} $(r,s)$ is not  $v$-sl, which is again a contradiction.
		
	\end{enumerate}

This proves that  $\{a,r,s\}$ cannot have a $v$-cycle. 
	
	\medskip 
	\noindent\underline{Claim 2:}  $u(a,a,r)=a$. 
	
	Assume for contradiction that $u(a,a,r)=r$.
	Then $\{a,r\}$ is clearly not a majority edge of $\fAob$, and 
	since $\fAob$ does not have affine edges it  follows that $(a,r)$ is $v$-sl. Furthermore,  $(a,r,s)$ is not a $v$-cycle and therefore Lemma~\ref{lem:cyclic} implies that $u(a_1,a_2,a_3)\not =a$ which contradicts  the definition of $a$. 
	
	\medskip
	
	
	Finally, consider the following application of the polymorphism $u$:
	$$u \big( (a,a), (a,a) ,(r,b) \big) = (a, b). $$
	Since $(a,a) \in R_{a}$ and $(r,b) \in R_c$ and since $u$ is in $\Pol(\mathfrak{A}_0^b)$ we get that  $(a,b)\in R_a \cup R_c$. Hence, ${(a,a,b)} \in R$ or $(c,a,b)\in R$, which contradicts our assumptions.
\end{proof}

\subsection{Proof of the Main Theorem}
We can now prove the main result of this section.

\begin{proof}[Proof of Theorem~\ref{theo:Datatractability}.]
		Let $\bA$ be a finite relation algebra that satisfies the assumptions of Theorem~\ref{theo:Datatractability} and let $\fAo$ be the atom structure of $\bA$ (Definition~\ref{def: atom str}). We denote by $\fAob$ the binarisation of $\fAo$ according to Definition~\ref{defi:binarisation}. It follows from the assumptions on $\bA$ that $\Pol(\fAo)$ contains a Siggers operation. By Lemma~\ref{lem: Polinclusion} we get that $\Pol(\fAob)$ contains a Siggers operation as well. Note that $\fAob$ is a finite binary conservative structure and therefore Theorem~\ref{thm:kazda} implies that $\fAob$ has no affine edges. Therefore, $\fAob$  satisfies the general assumption from  Section~\ref{sec: no affine} and we can define the operation $u$ as in~\eqref{eq:comp}. 
		Note that $u$ witnesses by Lemma~\ref{lem:only sl of majority} that $\fAob$ does not have an affine edge. We can now apply Theorem~\ref{theo: u from left to right} and get that $u$ is also a polymorphism of $\fAo$. Recall that $\fAo$ and $\fAob$ have by Lemma~\ref{lem:p2}  exactly the same semilattice edges and therefore 
	Lemma~\ref{lem:only sl of majority} and the fact that $u$ is a polymorphism of $\fAo$ imply that $\fAo$ does not have an affine edge. By Proposition~\ref{prop:no affine implies bounded w} we get that there exists a $3$-ary weak near unanimity polymorphism $f\in \Pol(\fAo)$ and a $4$-ary weak near unanimity polymorphism $g\in \Pol(\fAo)$ such that $$	\forall x,y,z \in B.~ f(y,x,x)=g(y,x,x,x)$$
	holds.  Theorem~\ref{theo: datalog reduction} implies that $\Csp(\fB)$ and thus also $\NSP(\bA)$ can be solved by $(4,6)$-consistency algorithm. \end{proof}

\section{Consistency and Symmetric Flexible-Atom Algebras}
We apply our result from Section~\ref{sec: LocalCons} to the class of finite symmetric relation algebras with a flexible atom and obtain a $k$-consistency versus NP-complete complexity dichotomy. 

A finite relation algebra  $\bA$ is called  \emph{integral} if the element $\id$ is an atom of $\bA$, i.e., $\id\in A_0$. We define flexible atoms for integral relation algebras only. For a discussion about integrality and flexible atoms consider Section 3 in \cite{BodKnaeFlexJournal}.

\begin{definition}
Let $\mathbf{A} \in \ra$ be finite and integral.
		An atom $s\in A_0$ is called \emph{flexible} if for all $a,b \in A \setminus \{\id\}$ it holds that $s\leq a\circ b$.
	\end{definition}

Relation algebras with a flexible atom have been studied intensively in the context of the \emph{flexible atoms conjecture}~\cite{Maddux1994,Alm2008}. It can be shown easily that  finite relation algebras with a flexible atom  have a {normal representation} \cite{BodirskyKnaeAAAI,BodKnaeFlexJournal}. In  \cite{BodKnaeFlexJournal} the authors obtained a P versus NP-complete complexity dichotomy for NSPs of finite symmetric relation algebras with a flexible atom (assuming P $\neq$ NP). In the following we strengthen this result and prove that every problem in this class can be solved by $k$-consistency for some $k\in \mathbb{N}$ or is NP-complete (without any complexity-theoretic assumptions).

We combine Theorem~\ref{theo:Datatractability} with the main result of~\cite{BodKnaeFlexJournal} to  obtain the following characterization for NSPs of finite symmetric 
relation algebras with a flexible atom that are solved by the $(4,6)$-consistency procedure. {Note that the difference of Theorem~\ref{theo:Datalogcharact} and the related result in~\cite{BodKnaeFlexJournal} is the algorithm that solves the problems in P.}
	
	\begin{theorem}\label{theo:Datalogcharact} Let $\bA$ be a  finite symmetric integral relation algebra with a flexible atom.
		Then the following are equivalent:
		\begin{itemize}
			\item $\bA$ admits a Siggers behavior.
	\item $\Nsp(\mathbf{A})$ can be solved by the $(4,6)$-consistency procedure.
	\end{itemize}
	\end{theorem}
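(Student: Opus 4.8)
The plan is to prove the equivalence in Theorem~\ref{theo:Datalogcharact} by combining the sufficient condition from Theorem~\ref{theo:Datatractability} with the complexity dichotomy of~\cite{BodKnaeFlexJournal}. The direction that ``$\bA$ admits a Siggers behavior'' implies solvability by $(4,6)$-consistency should be the easier one: I would first verify that the two hypotheses of Theorem~\ref{theo:Datatractability} are met. Since $\bA$ is finite symmetric integral with a flexible atom, it has a normal representation~\cite{BodirskyKnaeAAAI,BodKnaeFlexJournal}, so the structural prerequisite of Theorem~\ref{theo:Datatractability} holds. The Siggers behavior hypothesis is given by assumption, so the only remaining obligation is the ``all $1$-cycles'' condition. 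For this I would invoke Lemma~\ref{lem:injective implies cycle}: it suffices to produce a binary injective polymorphism of $\fB$, and I expect that for relation algebras with a flexible atom such a polymorphism is available (intuitively, the flexible atom can always absorb the label on a newly created diagonal pair, so two injective copies can be merged injectively). Once all $1$-cycles are established, Theorem~\ref{theo:Datatractability} directly yields that $\Nsp(\bA)$ is solved by the $(4,6)$-consistency procedure.

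For the converse direction I would argue contrapositively using the dichotomy from~\cite{BodKnaeFlexJournal}. Suppose $\bA$ does \emph{not} admit a Siggers behavior. Because $\Pol(\fAo)$ is conservative (Remark~\ref{rem: Aoconserv}), the absence of a Siggers operation among the conservative polymorphisms of $\fAo$ places us in case~2 of Theorem~\ref{2elemtsubalgebrassiggers}: there exist distinct atoms $a,b \in A_0$ on which every polymorphism restricts to a projection. By Theorem~\ref{2elemtsubalgebrassiggers} this makes $\csp(\fAo)$ NP-complete, and the main result of~\cite{BodKnaeFlexJournal} transfers this hardness to $\Nsp(\bA)$, so $\Nsp(\bA)$ is NP-complete. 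A problem that is NP-complete cannot be solved by the polynomial-time $(4,6)$-consistency procedure unless $\mathrm P = \mathrm{NP}$; but here one can do better and avoid any complexity-theoretic assumption, since the $(4,6)$-consistency procedure is one-sided correct and runs in polynomial time, and solvability by $k$-consistency is characterized by the weak near-unanimity condition of Theorem~\ref{theo:datalog finite}. The existence of such weak near-unanimity polymorphisms of $\fAo$ would in particular force a Siggers operation (a weak near-unanimity term gives rise to a Siggers term), contradicting our assumption. Hence $\Nsp(\bA)$ cannot be solved by $(4,6)$-consistency.

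The main subtlety I anticipate is making the converse direction \emph{unconditional}, i.e.\ ruling out solvability by $(4,6)$-consistency without appealing to $\mathrm P \neq \mathrm{NP}$. The clean way to do this is to work entirely at the level of polymorphisms of the atom structure: by Theorem~\ref{theo:datalog finite}, $(4,6)$-consistency (indeed $k$-consistency for some $k$) solving the associated CSP is equivalent to the existence of the compatible $3$-ary and $4$-ary weak near-unanimity polymorphisms, and such polymorphisms are incompatible with the projection situation of case~2 of Theorem~\ref{2elemtsubalgebrassiggers}. Concretely, if every polymorphism restricts to a projection on $\{a,b\}^n$, then no weak near-unanimity operation can exist, because a weak near-unanimity operation restricted to a two-element set on which it is a projection would violate its defining identities. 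Thus the failure of Siggers behavior rules out the weak near-unanimity polymorphisms, which by Theorem~\ref{theo: datalog reduction} and Theorem~\ref{theo:datalog finite} are exactly what $(4,6)$-consistency solvability demands. I would state this link carefully, since it is the step that upgrades the dichotomy of~\cite{BodKnaeFlexJournal} from a conditional P-vs-NP statement to an unconditional $k$-consistency-vs-NP-complete statement, which is precisely the advertised strengthening.
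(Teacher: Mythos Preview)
Your forward direction is essentially the paper's argument, with one imprecision: the binary injective polymorphism of the normal representation $\fB$ does not follow from the flexible atom alone, but from the Siggers behavior via Proposition~6.1 of~\cite{BodKnaeFlexJournal}. Since you are in the forward direction and have the Siggers behavior by assumption, this does not affect correctness, but your intuitive justification (``the flexible atom absorbs the diagonal label'') is not the actual reason the paper cites.

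The converse direction has a genuine gap. You argue: no Siggers in $\Pol(\fAo)$ $\Rightarrow$ projections on some two-element set $\Rightarrow$ no weak near-unanimity polymorphisms of $\fAo$ $\Rightarrow$ $\Nsp(\bA)$ is not solved by $(4,6)$-consistency. The last implication is unjustified. Theorem~\ref{theo: datalog reduction} is a \emph{one-way} statement: weak near-unanimity polymorphisms of $\fAo$ are \emph{sufficient} for $(4,6)$-consistency to solve $\Nsp(\bA)$, not necessary. Theorem~\ref{theo:datalog finite} characterises $k$-consistency for \emph{finite} templates, so it applies to $\Csp(\fAo)$, not directly to $\Nsp(\bA)=\Csp(\fB)$ with $\fB$ infinite. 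You would need to know that $(4,6)$-consistency solving $\Nsp(\bA)$ forces $\Csp(\fAo)$ to have bounded width, and nothing in the paper gives that; indeed Example~\ref{expl:omegaK2} exhibits a (non-flexible-atom) relation algebra whose NSP is solved by $(2,3)$-consistency while $\Csp(\fC_0)$ is not solved by $k$-consistency for any $k$. So the atom-structure route cannot be taken for granted.

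The paper's proof of the converse is different and avoids this issue entirely: it invokes Theorem~9.1 of~\cite{BodKnaeFlexJournal}, which in the absence of a Siggers behavior provides a polynomial-time reduction from $\csp(K_3)$ to $\Nsp(\bA)$ that \emph{preserves solvability by $(k,l)$-consistency}. Since $3$-colourability is known not to be solved by $(k,l)$-consistency for any $k,l$, neither is $\Nsp(\bA)$. This is how the paper gets the unconditional statement you were aiming for; your polymorphism-of-$\fAo$ argument does not reach it.
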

	\begin{proof}
Every finite symmetric relation algebra $\bA$ with a flexible atom has a normal representation $\fB$ by Proposition 3.5 in \cite{BodKnaeFlexJournal}. 

If the first item holds it follows from Proposition 6.1.~in \cite{BodKnaeFlexJournal} that $\fB$ has a binary injective polymorphism. By Lemma~\ref{lem:injective implies cycle} the relation algebra $\bA$ has all $1$-cycles. 
We apply Theorem~\ref{theo:Datatractability} and get that the second item in Theorem~\ref{theo:Datalogcharact} holds.
 
 
 We prove the converse implication by showing the contraposition. Assume that the first item is not satisfied. 
 {Then Theorem 9.1 in \cite{BodKnaeFlexJournal} implies that there exists a polynomial-time reduction from $\csp(K_3)$ to $\Nsp(\mathbf{A})$ which preserves solvability by the $(k,l)$-consistency procedure. The problem $\csp(K_3)$ is the 3-colorability problem which is known {(e.g., by \cite{BartoKozikFOCS09})} to be not solvable by the $(k,l)$-consistency procedure for every $k,l\in \mathbb{N}$. Hence $\Nsp(\mathbf{A})$ cannot be solved by the $(4,6)$-consistency procedure. }\end{proof}


	As a consequence of Theorem~\ref{theo:Datalogcharact} we obtain the following strengthening of the complexity dichotomy NSPs of finite symmetric integral relation algebra with a flexible atom~\cite{BodKnaeFlexJournal}.

\begin{corollary}[Complexity Dichotomy]
\label{cor:dicho} 
Let $\bA$ be a finite symmetric integral relation algebra  with a flexible atom. Then $\NSP(\bA)$ can be solved by the $(4,6)$-consistency procedure, or it is NP-complete.
	\end{corollary}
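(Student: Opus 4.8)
The plan is to derive the dichotomy by feeding the characterization of Theorem~\ref{theo:Datalogcharact} into Bulatov's dichotomy for conservative structures (Theorem~\ref{2elemtsubalgebrassiggers}), applied to the atom structure $\fAo$ of $\bA$. Since $\bA$ is finite, symmetric, integral and has a flexible atom, it has a normal representation $\fB$, and by Remark~\ref{rem: Aoconserv} the clone $\Pol(\fAo)$ is conservative. Hence Theorem~\ref{2elemtsubalgebrassiggers} applies to $\fAo$, and it splits the situation into exactly two mutually exclusive cases.

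First I would treat the tractable case. Suppose $\Pol(\fAo)$ contains a Siggers operation $s$. Being a polymorphism of $\fAo$, the operation $s$ preserves every relation of $\fAo$, in particular the ternary relation $R^{\fAo}$ of allowed triples, so clause~1 of Definition~\ref{defi:sigersbeh} holds; by Remark~\ref{rem: Aoconserv} it is conservative, giving clause~2; and it satisfies the Siggers identity by assumption, giving clause~3. Thus $\bA$ admits a Siggers behavior, and Theorem~\ref{theo:Datalogcharact} yields that $\NSP(\bA)$ is solved by the $(4,6)$-consistency procedure.

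Next I would treat the hard case: there exist distinct atoms $a,b \in A_0$ such that every polymorphism of $\fAo$ restricts to a projection on $\{a,b\}$. Then $\fAo$ has no Siggers polymorphism, so $\bA$ does not admit a Siggers behavior, and by the contrapositive of the equivalence in Theorem~\ref{theo:Datalogcharact} the problem $\NSP(\bA)$ is not solvable by the $(4,6)$-consistency procedure. To upgrade this to NP-completeness I would establish membership and hardness separately: membership in NP follows from Proposition~\ref{reductionBtoO}, which reduces $\csp(\fB) = \NSP(\bA)$ to the finite-domain problem $\csp(\fAo) \in \NP$; NP-hardness follows from the reduction from $\csp(K_3)$ to $\NSP(\bA)$ already used in the proof of Theorem~\ref{theo:Datalogcharact}, since $\csp(K_3)$ is $3$-colourability, which is NP-hard.

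The conceptual work is already carried out inside Theorem~\ref{theo:Datalogcharact} and Bulatov's theorem, so this corollary is essentially an assembly step. The two points that require care --- and where I expect the argument to be most delicate --- are the bookkeeping in the tractable case, namely verifying that the Siggers polymorphism supplied by Theorem~\ref{2elemtsubalgebrassiggers} literally witnesses all three clauses of Definition~\ref{defi:sigersbeh}, and establishing membership of $\NSP(\bA)$ in NP, which relies on the finiteness of $\fAo$ together with the reduction of Proposition~\ref{reductionBtoO} going from the (typically infinite) template $\fB$ to the finite template $\fAo$.
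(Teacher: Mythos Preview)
Your proposal is correct and follows essentially the same route as the paper. The paper's proof simply splits on whether $\bA$ admits a Siggers behavior: if yes, Theorem~\ref{theo:Datalogcharact} gives $(4,6)$-consistency; if no, it cites Theorem~9.1 in \cite{BodKnaeFlexJournal} directly for NP-completeness. Your detour through Theorem~\ref{2elemtsubalgebrassiggers} to make the case split is harmless (for symmetric $\bA$ a Siggers polymorphism of $\fAo$ is exactly a witness of Siggers behavior), and your separate treatment of NP-membership via Proposition~\ref{reductionBtoO} and NP-hardness via the $K_3$ reduction is a slightly more explicit unpacking of what Theorem~9.1 in \cite{BodKnaeFlexJournal} already packages together.
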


\begin{proof}
Suppose that the first condition in Theorem~\ref{theo:Datalogcharact} holds. Then Theorem~\ref{theo:Datalogcharact} implies that $\NSP(\bA)$ can be solved by the $(4,6)$-consistency procedure. If the first condition in Theorem~\ref{theo:Datalogcharact} is not satisfied, then it follows from Theorem 9.1 in \cite{BodKnaeFlexJournal} that $\NSP(\bA)$ is NP-complete.
	\end{proof}

\section{The Complexity of the Meta Problem}
In this section we study the computational complexity of deciding for a given finite symmetric relation algebra ${\bf A}$ with a flexible atom whether 
the $k$-consistency algorithm solves $\nsp({\bf A})$. We show that this problem is decidable in polynomial time even if ${\bf A}$ is given 
by the restriction of its composition table to the atoms of ${\bf A}$: note that this determines a symmetric relation algebra  uniquely, and that this is an (exponentially) more succinct representation of ${\bf A}$ compared to explicitly storing the full composition table.


\begin{definition}[Meta Problem]
We define $\Meta$ as the following computational problem. \\
{\bf Input:} the composition table of a finite symmetric relation algebra ${\bf A}$ restricted to $A_0$. \\
{\bf Question:} is there a $k \in {\mathbb N}$ such that $k$-consistency solves $\nsp({\bf A})$? 
\end{definition}

\begin{restatable}{theorem}{thmmeta}\label{thm:meta}
The problem $\Meta$ is undecidable. However, it 
can be decided in polynomial time
if the input is restricted to finite symmetric integral relation algebras ${\bf A}$ with a flexible atom. 
\end{restatable}

\begin{proof} 
The undecidability for Meta follows from Theorem~\ref{thm:RBCP-undecidable}. 
If $\bA$ has a flexible atom, then by Theorem~\ref{theo:Datalogcharact} it suffices to test the existence of an operation $f \colon A^6_0 \to A_0$ which satisfies conditions 1.-3.\ in Definition~\ref{defi:sigersbeh}. The three conditions can clearly be checked in polynomial time, so we already know that $\Meta$ is in NP.

Note that the search for $f$ may be phrased as an instance of $\csp(\fA_0)$ with $|A|^6$ variables (i.e., we add constraints that force any solution to be a polymorphism, and we add additional equality constraints to force that any solution will be a Siggers operation). Using the fact that the $k$-consistency procedure is one-sided correct even in the case that $\csp(\fA_0)$ is NP-hard  (i.e., if the procedure rejects a given instance of $\csp(\fA_0)$, then the instance is always unsatisfiable), 
we may use a standard self-reducibility argument to obtain a polynomial-time algorithm for finding $f$ (see, e.g.,~\cite{MetaChenLarose}; the idea is to run the $k$-consistency algorithm on the instance described above; if the answer `no', then clearly $\bA$ has no Siggers behaviour. If the answer is `yes', we additionally add a constraint that forces some variable $x$ to some value $a \in A$. If the resulting instance is unsatisfiable for all possible $a \in A$, then $k$-consistency didn't give the correct answer at the first place, and by our result $\bA$ has no Siggers behaviour. If for some $a \in A$, the resulting instance is satisfiable, we continue in this fashion until each variable is assigned to some value from $A$, which then provides a solution to the instance. Therefore, in this case $\bA$ has a Siggers behavior.)
\end{proof}

\section{Conclusion and Open Questions}
The question whether the network satisfaction problem for a given finite relation algebra can be solved by the famous $k$-consistency procedure is undecidable. Our proof of this fact heavily relies on prior work of Hirsch~\cite{Hirsch-Undecidable} and of Hirsch and Hodkinson~\cite{HirschHodkinsonRepresentability} and shows that almost any question about the network satisfaction problem for finite relation algebras is undecidable. 

However, if we further restrict the class of finite relation algebras, one may obtain strong classification results. We have demonstrated this for the class of finite symmetric integral relation algebras with a flexible atom (Theorem~\ref{cor:dicho});
the complexity of deciding whether the conditions in our classification result hold drops from undecidable to P (Theorem~\ref{thm:meta}). 
One of the remaining open problems is a characterisation of the power of $k$-consistency for the larger class of all finite relation algebras with a normal representation.

Our main result (Theorem~\ref{theo:Datatractability}) is a sufficient condition for the applicability 
of the $k$-consistency procedure; the  condition does not require the existence of a flexible atom but applies more generally to finite symmetric relation algebras $\bf A$ with a normal representation. Our condition consists of two parts: the first is the existence of all 1-cycles in $\bf A$, the second is that $\bf A$ admits a Siggers behavior. We conjecture that dropping the first part of the condition leads to a necessary and sufficient condition for solvability by the $k$-consistency procedure
in the case that the normal representation of $\bf A$ has a primitive automorphism group. 

\begin{conjecture}\label{conj:wild}
Let $\bf A$ be a finite symmetric relation algebra with a normal representation whose automorphism group is primitive. 
Then $\bf A$ 
admits a Siggers behavior if and only if 
$\nsp(\mathbf{A})$ can be solved by the $k$-consistency procedure for some $k\in \mathbb{N}$.
\end{conjecture}

Note that this conjecture generalises Theorem~\ref{theo:Datalogcharact},
because the normal representation of a relation algebra with a flexible atom always has a primitive automorphism group. 
The forward direction of the conjecture is true, 
because if $\bA$ admits a Siggers behavior,  
it must have 
all 1-cycles~\cite{BodirskyKnaeRamics},  and hence the claim follows from our main result           (Theorem~\ref{theo:Datalogcharact}). 

Clearly, the assumption that $\bA$ is symmetric cannot be dropped, since the point algebra, which has a normal representation with a primitive automorphism group, can be solved by the path consistency procedure, but does not admit a Siggers behavior. The following example shows that we cannot drop the primitivity condition in our conjecture as well.

\begin{figure}[t]
	\begin{center}
		
		\begin{tabular}{|c||c|c|c|}
			\hline 
			$~\circ~$	& $~\id~$ & $E$ & $N$ \\ 
			\hline \hline
			$\id$	&  $\id$ & $E$& $N$  \\ 
			\hline 
			$E$	&$E$  & $ \id$  & $ N$  \\ 
			\hline 
			$N$	&$N$  & $ N$ & $ 1$   \\ 
			\hline 
			
		\end{tabular}

	\end{center}

	\caption{Multiplication table of  the relation algebra $\mathbf{C}$.}
	\label{Table:omegaK2}
\end{figure}

\begin{example}
\label{expl:omegaK2}
Consider the relation algebra $\mathbf{C}$ with atoms $\{\id,E,N\}$ and the multiplication table in Figure~\ref{Table:omegaK2} (called $3_7$ in the terminology of Maddux~\cite{Maddux2006-dp}).  This relation algebra has a normal representation, namely the expansion of the infinite disjoint union of the clique $K_2$ by all first-order definable binary relations. We denote this structure by $\overline{\omega K_2}$. 
One can observe that $\Csp(\overline{\omega K_2})$ and therefore also the NSP of the relation algebra can be solved by the  $(2,3)$-consistency algorithm (for details see \cite{KnPhD}).

The normal representation of $\bf C$ has an imprimitive automorphism group ($\mathbf{C}$ does not have all $1$-cycles),
and therefore does not fall into the scope of  Theorem~\ref{theo:Datatractability}. 
In fact, our proof of Theorem does not work for $\mathbf{C}$, because the CSP of the atom structure $\fC_0$ of $\bC$ cannot be solved by the $k$-consistency procedure for some $k\in \mathbb{N}$. Hence, the reduction of  $\nsp(\mathbf{C})$ to  $\csp(\fC_0)$ (incorporated in Theorem~\ref{theo: datalog reduction}) does not imply that $\nsp(\mathbf{C})$ can be solved by $k$-consistency procedure for some $k\in \mathbb{N}$. Indeed, it can be shown that $\bC$ does not admit a Siggers behavior (the existence of a Siggers behavior has recently been verified for all relation algebras with at most 4 atoms with the help of a computer~\cite{bodirsky2025networksatisfactionproblemrelation}; however, the computation for this specific algebra can also be carried out easily by hand). 
\end{example}

It would be interesting to characterise the power of the $k$-consistency procedure for the NSP of finite relation algebras with a normal representation whose automorphism group is imprimitive. In this case, there is a non-trivial definable equivalence relation. It is already known that if this equivalence relation has finitely many classes, then the NSP is NP-complete and the $k$-consistency procedure does not solve the NSP~\cite{BodirskyKnaeRamics}. Similarly, the NSP is NP-complete if there are equivalence classes of finite size larger than two. 
    It therefore remains to study the case of     
    infinitely many two-element classes, and with infinitely many infinite classes. In both cases we wish to reduce the classification to the situation with a primitive automorphism group.

Finally, we ask whether it is true that if $\bf A$ is a finite symmetric relation algebra with a flexible atom and $\nsp(\bf A)$ can be solved by the $k$-consistency procedure for some $k$, then it can also be solved by the $(2,3)$-consistency procedure? In other words, can we improve $(4,6)$ in Corollary~\ref{cor:dicho} to $(2,3)$?  

\section*{Acknowledgements}
The authors thank Moritz Jahn and Paul Winkler for their feedback on the journal version of the article. 

\bibliography{global}

\end{document}